\def\on{\bar\rho}
\newtheorem{theorem}{Theorem}[section]
\newtheorem{remark}{Remark}[section]
\newtheorem{definition}{Definition}[section]
\newtheorem{lemma}[theorem]{Lemma}
\newtheorem{corollary}[theorem]{Corollary}
\newtheorem{proposition}[theorem]{Proposition}
\newcommand{\n}{\rho}
\newcommand{\ti}{\tilde}
\renewcommand{\div}{ {\rm div }  }
\newcommand{\pa}{\partial}
\renewcommand{\r}{\mathbb{R}}
\newcommand{\bi}{\bibitem}
\newcommand{\ia}{\int_0^T}
\newcommand{\bt}{\begin{theorem}}
\newcommand{\bl}{\begin{lemma}}
\newcommand{\el}{\end{lemma}}
\newcommand{\et}{\end{theorem}}
\newcommand{\ga}{\gamma}
\newcommand{\curl}{{\rm curl} }
\newcommand{\de}{\delta}
\newcommand{\ve}{\varepsilon}
\newcommand{\la}{\label}
\newcommand{\ol}{\overline}
\newcommand{\bn}{\begin{eqnarray}}
\newcommand{\en}{\end{eqnarray}}
\newcommand{\bnn}{\begin{eqnarray*}}
\newcommand{\enn}{\end{eqnarray*}}
\newcommand{\bnnn}{\begin{eqnarray*}}
\newcommand{\ennn}{\end{eqnarray*}}
\newcommand{\ba}{\begin{aligned}}
\newcommand{\ea}{\end{aligned}}
\newcommand{\be}{\begin{equation}}
\newcommand{\ee}{\end{equation}}
\def\O{{\Omega }}
\def\norm[#1]#2{\|#2\|_{#1}}
\newcommand{\no}{\nonumber\\}
\newcommand{\si}{\sigma}
\def\la{\label}
\def\na{\nabla}
\def\on{\bar\n}
\title{Global Classical Solutions to the Compressible Navier-Stokes Equations with Slip Boundary Conditions in 3D Exterior Domains }
 \author{Guocai C{\small AI}$^{a}$,
   Jing L{\small I}$^{ b,c,d} $, Boqiang L{\small \"U}$^{e} $ \thanks{email:  gotry@xmu.edu.cn (G. C. Cai), ajingli@gmail.com (J. Li), lvbq86@163.com (B. Q.  L\"u) }  \\
{\normalsize a.  School of Mathematical Sciences, }\\ {\normalsize  Xiamen University, Xiamen 361005, P. R. China;}\\
{\normalsize b. Department of Mathematics, }\\ {\normalsize  Nanchang University, Nanchang 330031, P. R. China;} \\ {\normalsize c. Institute of Applied Mathematics, AMSS,} \\ {\normalsize \&   Hua Loo-Keng Key Laboratory of Mathematics,}\\
{\normalsize  Chinese Academy of Sciences,    Beijing 100190,
P. R. China;}
 \\ {\normalsize d.  School of Mathematical Sciences,}\\
{\normalsize  University of Chinese Academy of Sciences, Beijing 100049, P. R. China}
\\ {\normalsize e.  College of Mathematics and Information Science,}\\
{\normalsize   Nanchang Hangkong University, Nanchang 330063, P. R. China}}
\date{ }
\begin{document}
\maketitle

 \begin{abstract} We are concerned with the global existence of classical solutions
 to the barotropic compressible Navier-Stokes equations with slip boundary condition in a three-dimensional (3D) exterior domain.  We demonstrate that the classical solutions exists globally in time provided that the initial total energy is suitably small. It is worth noting that the initial density is allowed to have large oscillations and contain vacuum states. For our purpose, some new techniques and methods are adopted to obtain necessary a priori estimates, especially the estimates on the boundary. Moveover, we also give the large-time behavior of the classical solutions what we have gotten.
 \end{abstract}

Keywords: compressible Navier-Stokes equations;  global existence;  slip boundary condition; exterior domain; vacuum; large-time behavior.

\section{Introduction}

The viscous barotropic compressible Navier-Stokes for isentropic flows reveal the principles of conservation of mass and momentum in the absence of exterior forces:
   \be \la{a1}  \begin{cases}\n_t+{\rm div} (\n u)=0,\\
 (\n u)_t+{\rm div}(\n u\otimes u)-\mu\Delta u-(\mu+\lambda)\na {\rm
div} u +\na P(\n) =0, \end{cases}\ee
 where $(x,t)\in\Omega\times (0,T]$, $\Omega$ is a domain in $\r^{3}$, $t\ge 0$ is time,   $x$  is the spatial coordinate. $\rho\geq0, u=(u^1,u^2,u^3)$ and $P(\rho)=a\rho^{\gamma}$$(a>0,\gamma>1)$
are the unknown fluid density, velocity and pressure, respectively. The constants $\mu$ and $\lambda$ are the shear viscosity and bulk coefficients respectively  satisfying the following physical restrictions: \be\la{h3} \mu>0,\quad 2\mu +  {N} \lambda\ge 0.
\ee

This paper can be regarded as  a continuation of our previous work \cite{CCL1}. Unlike the previous article, we study the global existence of classical solutions of \eqref{a1} in the exterior of a simply connected bounded domain in $\r^3$ rather than in the inner bounded domain. More precisely, three-dimensional  problem of the system \eqref{a1} will be investigated under the assumptions: The domain $\Omega$ is the exterior of a simply connected bounded domain $D$ in $\r^3$, i.e. $\Omega=\r^3-\bar{D}$ and its boundary $\partial\Omega$ is smooth. In addition, the system \eqref{a1} is discussed subject to the given initial data
\be \la{h2} \n(x,0)=\n_0(x), \quad \n u(x,0)=\n_0u_0(x),\quad x\in \Omega,\ee
and slip boundary condition
\be \la{ch1} u\cdot n=0,\,\,\,\,\curl u\times n=0 \,\,\,\text{on} \,\,\,\partial\Omega,\ee
with the far field behavior
\be \la{ch2} u(x,t)\rightarrow0,\,\,\rho(x,t)\rightarrow\rho_\infty\geq0,\,\,as\,\, |x|\rightarrow\infty, \ee
where $n=(n^1,n^2,n^{3})$ is the unit normal vector to the boundary $\partial\Omega$ pointing outside $\Omega$, $\rho_\infty$ is a given constant. Our purpose is to show that such  global classical solutions indeed exists provided that the initial energy is sufficiently small. It is worth noting that in this paper the initial density is allowed to have possibly large oscillations with constant state as far field which could be either vacuum or non-vacuum. Since $\Omega$ is no longer bounded, there are some distinguish differences from the previous work \cite{CCL1}.

There are a considerable number of literatures on the large time existence and
behavior of solutions to (\ref{a1}). The one-dimensional problem
has been studied extensively, see
\cite{Kaz,Ser1,Ser2,Hof} and the references therein. For the
multi-dimensional case, the local existence and uniqueness of
classical solutions are known in \cite{Na,se1}  in the absence of
vacuum and recently, for strong solutions also, in \cite{K3,K1,
K2, S2,hlma} for the case that the initial density need not be positive
and may vanish in open sets. The global classical solutions were
first obtained by Matsumura-Nishida \cite{M1} for initial data
close to a non-vacuum equilibrium in $H^3.$ In
particular, the theory requires that the solution has small
oscillations from a uniform non-vacuum state so that the density
is strictly away from vacuum. Later, Hoff \cite{H3,Hof2,Ho3}
studied the problem for discontinuous initial data. For the
existence of solutions for arbitrary data, the major breakthrough is due to Lions \cite{L1} (see also  Feireisl  \cite{F1,F2}), where the
global existence of weak solutions  when the exponent $\ga$ is suitably large are achieved. The
main restriction on initial data is that the initial total energy is
finite, so that the density vanishes at far fields, or even has
compact support. However, little is known on the structure of such
weak solutions, particularly, the regularity and the uniqueness of such weak solutions remain open.
Here we refer the reader to the books by Lions \cite{L1}, Feireisl \cite{F2}, Novotn\'{y} $\&$ Stra$\check{s}$kraba \cite{ANIS} and Giga $\&$ Novotn\'{y} \cite{ygan} for further details.

The choice of the boundary conditions is more complex when one discusses the system \eqref{a1} in a region with boundaries. One of the frequent choices is the homogeneous Dirichlet boundary condition $u = 0$ on $\partial\Omega$ in the case when $\partial\Omega$ represents a fixed wall. This condition was formulated by G. Stokes in 1845, which is also called no-slip boundary condition expressing the physical fact that a real fluid adheres to $\partial\Omega$. However, In the case where the obstacles have a rough boundary, the no-slip boundary condition is no longer valid (see for instance \cite{se2}). An alternative was suggested by H. Navier even before (in 1824), who proposed the conditions as follows:
\be \la{Navi}
u \cdot n = 0, \,\,(2D(u)\,n+ \vartheta u)_{tan}=0 \,\,\,\text{on}\,\,\, \partial\Omega,
\ee
where $D(u) = (\nabla u+(\nabla u)^{\rm tr})/2$ is the shear stress, $\vartheta$ is a scalar friction function, and the symbol $v_{tan}$ represents the projection of tangent plane of the vector $v$ on $\partial\Omega$. We also call it Navier-type slip condition  in which there is a stagnant layer of fluid close to the wall allowing a fluid to slip and the slip velocity is proportional to the shear stress.
Such boundary conditions can be induced by effects of free capillary boundaries (see \cite{BE1}), or a rough boundary as in \cite{Apv1, Jwm1}, or a perforated boundary, which is then called Beavers-Joseph's law, see \cite{Scg1, Bgs1}, or an exterior electric field as in \cite{Cde1}. The Navier-type slip condition has been employed in a wide range of problem, including free surface problems (see e.g.,\cite{Sva1}), turbulence modeling (see e.g.,\cite{Glw1}) and inviscid limits (see e.g.,\cite{Xyxz1, dvhb1}). It is a remarkable fact that when $\partial\Omega$ is of constant curvature, \eqref{ch1} is equivalent to \eqref{Navi} with $\vartheta=\kappa$ (see \cite{CCL1}),
where $\kappa$ is the corresponding principal curvature of $\partial\Omega$. One can see \cite{Sva1, Mpb1, Xyxz1, dvhb1, Vka1, Ho3} for the studies of Navier-Stokes equations with Navier-type slip boundary condition. However, as far as we know, there are few research works on the compressible Navier-Stokes equations with slip boundary condition in an unbounded domain, except for the results of \cite {Ho3} and \cite {F3}, which give the global existence of weak solutions in a half-space and the weak-strong uniqueness property in the class of finite energy weak solutions in an unbounded domain respectively.

The outline of this paper is as follows. First, we will give our main results at the end of  this section. In Section 2,  some notations, known facts and elementary inequalities needed in later analysis are prepared for our discussion. Section 3 and Section 4 are devoted to deriving the necessary a priori estimates on classical solutions which can guarantee the local classical solution to be a global classical one. Finally, we will prove the main results, Theorems \ref{th1} and \ref{th2} in Section 5.

Before stating the main results, we explain some functional spaces, notations and conventions used throughout this paper. For a positive integer $k$ and $1\leq q<+\infty$, the standard homogeneous  Sobolev spaces are denoted as follows:
$$ D^{k,q}(\Omega)=\left\{u\in L_{loc}^{1}(\Omega)\left|\|\nabla^k u\|_{L^q(\Omega)}<+\infty\right.\right\},\,\,~~\|\nabla u\|_{D^{k,q}(\Omega)}\triangleq\|\nabla^k u\|_{L^q(\Omega)};$$
$$W^{k,q}(\Omega)=L^q(\Omega)\cap D^{k,q}(\Omega),\,\,\text{with the norm}~ \| u\|_{W^{k,q}(\Omega)}\triangleq\left(\sum\limits_{|m|\leq k} \|\nabla^m u\|^q_{L^q(\Omega)}\right)^{\frac{1}{q}};$$
\quad\,\, $D^k(\Omega)=D^{k,2}(\Omega)$, $H^k(\Omega)=W^{k,2}(\Omega)$.

For simplicity, we denote $L^q(\Omega)$, $D^{k,q}(\Omega)$, $D^k(\Omega)$, $W^{k,q}(\Omega)$ and $H^k(\Omega)$  by $L^q$, $D^{k,q}$, $D^k$, $W^{k,q}$ and $H^k$  respectively, and set
$$B_R\triangleq\{x\in\r^3||x|<R\},\,\,\,\int fdx\triangleq\int_\Omega fdx. $$


For two $3\times 3$  matrices $A=\{a_{ij}\},\,\,B=\{b_{ij}\}$,  the trace of $AB$ is presented by $A\colon B$,  that is,
$$ A\colon  B\triangleq \text{tr} (AB)=\sum\limits_{i,j=1}^{3}a_{ij}b_{ji}.$$

Finally,  we denote  $\nabla_iv=(\partial_iv^1,\partial_iv^2,\partial_iv^3),\,\,i=1,2,3,$ and the
material derivative of $v$   by  $\dot v\triangleq v_t+u\cdot\nabla v$, when $v=(v^1,v^2,v^3).$ Furthermore, $v\cdot \nabla u\triangleq(v\cdot \nabla u^1, v\cdot \nabla u^2, v\cdot \nabla u^3)$ and $\nabla u\cdot v\triangleq (\nabla_1 u\cdot v, \nabla_2 u\cdot v, \nabla_3 u\cdot v)$.

 Let $(\rho_0,u_0)$ and $\rho_\infty$ be given by \eqref{h2} and \eqref{ch2} respectively. The initial total energy of (\ref{a1}) is defined by
\be \la{c0}
C_0 =\int_{\Omega}\left(\frac{1}{2}\n_0|u_0|^2 + G(\rho_0) \right)dx,
\ee
where
\bnn
G(\rho)\triangleq\rho\int_{\rho_\infty}^{\rho}\frac{P(s)-P(\rho_\infty)}{s^{2}} ds.
\enn

Now we give our first result, which indicates that the existence and large-time behavior of global classical solutions to the problem  \eqref{a1}-\eqref{ch2}.
\begin{theorem}\la{th1} Let $\Omega$ be the exterior of a simply connected bounded domain $D$ in $\r^3$ and its boundary $\partial\Omega$ is smooth. For some $q\in(3,6)$  and two given constants $M$, $\bar{\rho}\geq \rho_\infty+1$, assume that  the initial data $(\n_0,u_0)$ is given as follows:
\be\ba \la{dt1}  & u_0\in\left\{f\in
 D^1\cap D^2:f\cdot n=0, ~~ {\rm curl}f\times n=0 ~\mbox{ \rm on } ~\partial \Omega\right\} ,\\ &\quad (\rho_0-\rho_\infty,P(\rho_0)-P(\rho_\infty))\in  H^2\cap W^{2,q}
,  \ea\ee
\be\la{dt2} 0\leq\rho_0\leq\bar{\rho},\,\,\|\nabla u_0\|_{L^2}\leq M, \ee
$$\rho_0\in L^{3/2}\,\,\, \text{if}\,\,\, \rho_\infty=0,$$
and the compatibility condition
\be\la{dt3}
-\mu\triangle u_0-(\mu+\lambda)\nabla\div u_0 + \nabla P(\rho_0) = \rho_0^{\frac{1}{2}}g, \ee
for some  $ g\in L^2.$  Then
there exists a positive constant $\ve$ depending only on  $\mu ,  \lambda ,   \ga ,  a ,    \on$,  $\Omega$ and $M$  such that if $C_0\le\ve$, then
the system \eqref{a1}-\eqref{ch2} has a unique global classical solution $(\n,u)$ in $\Omega\times(0,\infty)$ satisfying
\be\la{dt5}
  0\le\n(x,t)\le 2\bar{\n},\quad  (x,t)\in \O\times(0,\infty),
\ee
\be\la{dt6}\begin{cases}
(\rho-\rho_\infty,P-P(\rho_\infty))\in C([0,\infty);H^2\cap W^{2,q} ),\\  \na u\in C([0,\infty);H^1 )\cap  L^\infty_{\rm loc}(0,\infty; W^{2,q}),\\
u_t\in L^{\infty}_{\rm loc}(0,\infty; D^1\cap D^2)\cap H^1_{\rm loc}(0,\infty; D^1),\\   \sqrt{\n}u_t\in L^\infty(0,\infty;L^2).
\end{cases}\ee
In addition, the following large-time behavior
\be  \la{qa1w} \lim_{t\rightarrow \infty}\left(\|\n-\rho_\infty\|_{L^r}+\|\rho^\frac{1}{8}u\|_{L^4}+\|\nabla u\|_{L^2} \right)=0,\ee
holds for all $r\in(2,\infty)$ if $\rho_\infty>0$ and $r\in(\gamma,\infty)$ if $\rho_\infty=0$.
\end{theorem}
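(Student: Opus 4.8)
The plan is to follow the by-now-standard continuity-argument strategy for global well-posedness of compressible Navier--Stokes with small energy and possibly large density oscillations, adapted to the exterior-domain/slip-boundary setting: first invoke a local existence result for the classical solution (from the references \cite{K3,K1,K2,S2,hlma} together with the slip-boundary local theory), then establish \emph{a priori} estimates that are uniform in time under the smallness hypothesis $C_0\le\varepsilon$, and finally extend the local solution to $[0,\infty)$ by a bootstrap/open-closed argument, reading off \eqref{dt5}--\eqref{dt6} and the decay \eqref{qa1w} from those estimates. I would set up the continuity argument around a suitable functional of the solution on $[0,T]$, say something controlling $\sup_t\|\rho\|_{L^\infty}$, $\sup_t\|\nabla u\|_{L^2}$, a weighted quantity $\int_0^T\!\int\rho|\dot u|^2$, and the time-weighted higher norms $\int_0^T t\|\nabla\dot u\|_{L^2}^2$ etc.; the goal is to show that the set of times for which these bounds hold with constant $2C_0^\alpha$ (or $2\bar\rho$ for the density) is both open and closed, hence all of $[0,T]$, with $T$ then taken to $\infty$.

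The core of the work --- which is exactly what Sections 3 and 4 of the paper are advertised to do --- is the chain of a priori estimates. The first layer is the basic energy estimate, obtained by testing the momentum equation with $u$ and using \eqref{ch1} to kill the boundary terms: $\curl u\times n=0$ and $u\cdot n=0$ together make the viscous boundary contributions vanish (on a general smooth boundary one picks up curvature terms, which is precisely why the slip condition, rather than Dirichlet, is being used). The second, decisive layer is the $H^1$-type estimate for $u$: multiply the momentum equation by $\dot u$ and integrate, which produces $\frac{d}{dt}\int(\mu|\nabla u|^2+\cdots)$ plus $\int\rho|\dot u|^2$ on the good side and pressure/convective terms on the other; here the effective viscous flux $F=(2\mu+\lambda)\div u-(P-P(\rho_\infty))$ and the vorticity $\curl u$ must be estimated in $L^p$ via the elliptic systems they satisfy, and crucially the boundary integrals arising from integration by parts must be controlled --- on $\partial\Omega$ one uses $u\cdot n=0$, $\curl u\times n=0$, trace inequalities, and the geometry of $\partial\Omega$ to absorb them. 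Next one bounds $\sup_t\|\rho\|_{L^\infty}$ by rewriting the mass equation along particle trajectories using $F$, showing the density oscillation stays below $2\bar\rho$ as long as $C_0$ is small; then one closes the loop on the time-weighted estimates for $\sqrt\rho u_t$, $\nabla\dot u$, and finally on $\|\nabla^2 u\|_{L^q}$, $\|\nabla(\rho-\rho_\infty)\|_{H^1\cap W^{1,q}}$ by elliptic regularity applied to the (stationary, at each time $t$) Lamé system with the slip boundary condition, and the transport estimate for $\nabla\rho$ using the now-controlled $\|\nabla u\|_{L^\infty}$ (via a logarithmic / Beale--Kato--Majda-type bound). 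The compatibility condition \eqref{dt3} is what gives $\sqrt\rho\,\dot u\in L^\infty_tL^2_x$ initially and lets the $t\to 0$ limits of the time-weighted norms be handled.

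The genuinely new difficulty, and the one I expect to be the main obstacle, is the \emph{exterior-domain geometry} interacting with the \emph{boundary estimates}. In a bounded domain one has Poincaré and compact Sobolev embeddings for free; in $\Omega=\mathbb{R}^3\setminus\bar D$ one does not, so: (i) when $\rho_\infty=0$ one must replace the missing zeroth-order control by the extra hypothesis $\rho_0\in L^{3/2}$ propagated in time, and carefully track which Sobolev/interpolation inequalities survive (the Gagliardo--Nirenberg inequalities on exterior domains, the $L^6$ embedding of $D^1$, Bogovskii-type operators with decay); (ii) the representation of $u$ in terms of $\div u$ and $\curl u$ via a div-curl system on an unbounded domain with the boundary conditions $u\cdot n=0$, $\curl u\times n=0$ needs the right functional framework and decay at infinity, and this is where the paper's "new techniques, especially the estimates on the boundary" enter. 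Once the uniform estimates are in hand, the global existence and uniqueness follow by the standard extension argument plus the known uniqueness of classical solutions, and the large-time behavior \eqref{qa1w} follows from the integrability in time of the dissipation terms (e.g. $\int_0^\infty(\|\nabla u\|_{L^2}^2+\|\nabla u\|_{L^2}^{\text{something}})\,dt<\infty$ combined with the bound on its time-derivative forces $\|\nabla u\|_{L^2}\to0$, and similarly for the density and momentum terms).
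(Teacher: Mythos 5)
Your outline follows essentially the same route as the paper: local existence, the standard small-energy continuity argument built on the energy estimate, the $\dot u$-weighted estimates with the effective viscous flux $F$ and $\curl u$, a Zlotnik-type bound for $\sup_t\|\rho\|_{L^\infty}$ along trajectories, time-weighted higher-order estimates closed via div-curl elliptic theory and a Beale--Kato--Majda logarithmic bound, and the decay \eqref{qa1w} from time-integrability of the dissipation together with control of its time derivative. The only substantive point you flag but do not actually resolve is the paper's key new device for the troublesome boundary terms (notably $\int_{\partial\Omega}\sigma^{m}(\nabla F\cdot u)(\dot u\cdot n)\,ds$), which the authors handle by writing $u=u^{\perp}\times n$ on $\partial\Omega$, applying the divergence theorem, and using a cut-off supported near the obstacle to convert the boundary integral into volume integrals controlled by $\|\nabla F\|_{L^p}$ and $\|\nabla u\|_{L^2}$.
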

The following theorem shows the large-time behavior of the gradient of the
density when vacuum states appear initially.
\begin{theorem}\la{th2}
Under the conditions of Theorem \ref{th1}, assume further  that $\rho_\infty>0$ and
there exists some point $x_0\in \Omega$ such that $\n_0(x_0)=0.$  Then the unique
global classical solution $(\n,u)$ to the   problem  \eqref{a1}-\eqref{ch2} obtained in
Theorem \ref{th1}  satisfies that for any $r>3,$
\be\la{qa2w}\ba \lim_{t\rightarrow \infty}\|\na\n (\cdot,t)\|_{L^r}=\infty. \ea\ee
\end{theorem}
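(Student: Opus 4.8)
The plan is to argue by contradiction, combining the transport of the vacuum set along the fluid flow with the large‑time decay \eqref{qa1w}; since $\rho_\infty>0$, this is precisely where the conclusions of Theorem \ref{th1} are used. The whole argument is short: all the analytic effort has already gone into Theorem \ref{th1}, and here one only has to exploit its consequences.

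\emph{Step 1 (the vacuum point persists).} I would first track the particle trajectory through $x_0$: let $X(t)$ solve $\frac{d}{dt}X(t)=u(X(t),t)$ with $X(0)=x_0$. The classical solution of Theorem \ref{th1} is regular enough for this Lagrangian flow to be well defined on $[0,\infty)$ (by the regularity listed in \eqref{dt6} together with the a priori bounds derived in Sections~3--4, which render $u$ Lipschitz in space, locally integrably in time); moreover the boundary condition $u\cdot n=0$ on $\partial\Omega$ from \eqref{ch1} keeps $X(t)\in\bar\Omega$ for all $t\ge0$. Rewriting the continuity equation in \eqref{a1} as $\dot\n=-\n\,{\rm div}\,u$ and evaluating it along $X(\cdot)$ gives $\frac{d}{dt}\n(X(t),t)=-(\n\,{\rm div}\,u)(X(t),t)$, hence
\[
\n(X(t),t)=\n_0(x_0)\,\exp\!\Big(-\int_0^t({\rm div}\,u)(X(s),s)\,ds\Big)=0\qquad\text{for all }t\ge0,
\]
because $\n_0(x_0)=0$. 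In particular $\inf_{x\in\bar\Omega}\n(x,t)=0$ for every $t\ge0$.

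\emph{Step 2 (contradiction via interpolation).} Suppose $\|\na\n(\cdot,t)\|_{L^r}$ does not tend to $\infty$ as $t\to\infty$; then there exist $t_k\to\infty$ and a finite $K$ with $\|\na\n(\cdot,t_k)\|_{L^r}\le K$. Since $r>3$ and $\n-\rho_\infty$ decays at spatial infinity (it belongs to $H^2\cap W^{2,q}$ with $q>3$ by \eqref{dt6}), a Gagliardo--Nirenberg inequality on the exterior domain $\Omega$ (obtained, e.g., via a bounded Sobolev extension of $\n-\rho_\infty$ to $\r^3$) yields
\[
\|\n(\cdot,t_k)-\rho_\infty\|_{L^\infty}\le C\,\|\n(\cdot,t_k)-\rho_\infty\|_{L^r}^{\,1-3/r}\big(\|\n(\cdot,t_k)-\rho_\infty\|_{L^r}+\|\na\n(\cdot,t_k)\|_{L^r}\big)^{3/r}.
\]
As $\rho_\infty>0$, every $r\in(2,\infty)$ is admissible in \eqref{qa1w}, so $\|\n(\cdot,t_k)-\rho_\infty\|_{L^r}\to0$; combined with the uniform bound $\|\na\n(\cdot,t_k)\|_{L^r}\le K$ this forces $\|\n(\cdot,t_k)-\rho_\infty\|_{L^\infty}\to0$, and hence $\n(x,t_k)\ge\rho_\infty/2>0$ on $\bar\Omega$ for all large $k$. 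This contradicts $\n(X(t_k),t_k)=0$ from Step~1. Therefore $\lim_{t\to\infty}\|\na\n(\cdot,t)\|_{L^r}=\infty$ for every $r>3$.

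The only points that genuinely require care — and hence the place I would expect any subtlety — are the justification that the Lagrangian flow is well defined up to $t=0$ and really transports the vacuum region (this rests on the a priori estimates of Sections~3--4 rather than on anything new) and the use of the interpolation inequality on the unbounded domain $\Omega$ for $\n-\rho_\infty$; the extension argument introduces only a lower‑order $\|\n-\rho_\infty\|_{L^r}$ term, which vanishes as $t\to\infty$ anyway, so it is harmless. Conceptually the statement says no more than: a persistent vacuum point is incompatible with $\|\n-\rho_\infty\|_{L^r}\to0$ unless $\|\na\n\|_{L^r}\to\infty$.
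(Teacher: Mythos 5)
Your proposal is correct and follows essentially the same route as the paper: persistence of the vacuum point along the Lagrangian flow (the paper's formula $\rho(x,t)=\rho_0(X(0;t,x))\exp\{-\int_0^t\div u\,d\tau\}$), followed by a contradiction between the decay \eqref{qa1w} and a Gagliardo--Nirenberg bound of $\|\rho-\rho_\infty\|_{C(\bar\Omega)}\ge\rho_\infty$. The only cosmetic difference is that the paper interpolates against $\|\rho-\rho_\infty\|_{L^3}$ using its Lemma \ref{l1} (inequality \eqref{g2}) on the exterior domain directly, whereas you use the $L^r$ norm and an extension argument, which is equally valid.
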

\begin{remark} Since $q>3,$ it follows from Sobolev's inequality and \eqref{dt6}$_1$  that \be\la{soh1}  \n-\rho_\infty,\na \n \in C(\bar\Omega\times [0,T]).\ee
Moreover, it also follows from \eqref{dt6}$_2$ and \eqref{dt6}$_3$ that  \be \la{soh2} u,\na u, \na^2 u, u_t \in C(\bar\Omega\times [\tau,T]),\ee due to the following simple fact that $$L^2(\tau,T;H^1)\cap H^1(\tau,T;H^{-1})\hookrightarrow C([\tau,T];L^2).$$
Finally, by \eqref{a1}$_1,$ we have \bnn \n_t=-u\cdot\na \n-\n\div u\in C(\bar\Omega\times [\tau,T]),\enn which together with \eqref{soh1} and \eqref{soh2} shows that the solution obtained by Theorem \ref{th1} is a classical one.
\end{remark}
\begin{remark}\la{u0A}
One also can prove the same conclusions of Theorems \ref{th1} and \ref{th2} under the following more wide boundary condition:
\be \la{ch101} u \cdot n = 0, \,\,\curl u\times n=-A\, u \,\,\,\text{on}\,\,\, \partial\Omega,\ee
where $A=A(x)$ is $3\times 3$ symmetric matrix which has compact support in $\r^3$ and $A\in W^{2,6}$ satisfying the conditions of Remark 1.3 in \cite{CCL1}. Since the treatments of some additional priori estimates are parallel to those of \cite{CCL1}, we omit detailed proofs.
\end{remark}

  Due to the equivalence of norms $\|\nabla v\|_{L^2}\simeq \|D (v)\|_{L^2}$ for any $v\in D^1$ with $v \cdot n = 0$ and  $v\rightarrow0\,\,as\,\, |x|\rightarrow\infty$ (see \cite{DMR1}),  similar to the proof of \cite[Theorem 1.4]{CCL1}, we immediately have the following conclusion.
 \begin{theorem}\label{th3}  Set $A=B-2D(n)$ in \eqref{ch101}, where $B\in W^{2,6}(\Omega)$ is a positive semi-definite $3\times 3$ symmetric matrix. Under the same assumptions of Theorem \ref{th1} expect that
 the first condition of \eqref{dt1} is replaced by $$u_0\in\left\{f\in
 D^1\cap D^2:f\cdot n=0, ~~ {\rm curl}f\times n=-A\, u ~\mbox{ \rm on } ~\partial \Omega\right\}, $$
 the conclusions of Theorems \ref{th1}-\ref{th2} with respect to the slip boundary condition \eqref{ch101} (instead of \eqref{ch1}) remain valid provided that $2\mu +  3 \lambda>0$.
\end{theorem}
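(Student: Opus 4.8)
The plan is to reduce Theorem \ref{th3} to Theorem \ref{th1} (and its proof) by the change-of-boundary-condition device already introduced in Remark \ref{u0A}, exploiting the key algebraic observation that with $A = B - 2D(n)$ the slip condition \eqref{ch101} becomes $\curl u \times n + 2D(n)u = Bu$ with $B$ positive semi-definite. First I would record the standard fact, used already in \cite{CCL1}, that on a smooth boundary with $u\cdot n = 0$ one has the pointwise identity $(D(u)n)_{\tan} = \tfrac12(\curl u\times n) + (D(n)u)_{\tan}$ (this is the curvature identity that makes \eqref{ch1} and \eqref{Navi} comparable), so that \eqref{ch101} with $A = B - 2D(n)$ is equivalent to the Navier-type condition $(2D(u)n + Bu)_{\tan} = 0$ on $\partial\Omega$ with friction matrix $B \ge 0$. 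The point of passing to this form is that the boundary term arising in the basic energy estimate, namely $\int_{\partial\Omega} (Bu)\cdot u\, dS$, now has a favorable sign and can simply be discarded.

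Next I would go through the a priori estimate scheme of Sections 3 and 4 and check that every place where the boundary condition \eqref{ch1} was used either continues to work verbatim or picks up only controllable extra terms. The three sensitive points are: (i) the basic energy identity, where the new boundary contribution is $-\int_{\partial\Omega}(Bu\cdot u)\,dS \le 0$ and is dropped; (ii) the estimates for $\|\nabla u\|_{L^2}$, $\|\curl u\|$, $\|\div u\|$ and the higher-order (effective viscous flux / vorticity) estimates, where one uses the equivalence $\|\nabla v\|_{L^2} \simeq \|D(v)\|_{L^2}$ for $v \in D^1$ with $v\cdot n = 0$ and $v \to 0$ at infinity — this is exactly the inequality quoted from \cite{DMR1} in the text, and it is precisely here that the hypothesis $2\mu + 3\lambda > 0$ is needed, since the Korn-type control of $\|\nabla u\|$ by $\mu\|D(u)\|^2 + (\mu+\lambda)\|\div u\|^2$ (rather than the weaker $\mu>0$, $2\mu+N\lambda\ge0$ of \eqref{h3}) requires strict ellipticity of the full stress operator; and (iii) the boundary integrals in the $\dot u$- and $\nabla^2 u$-estimates, where integration by parts against the curl-boundary condition now generates terms involving $B$ and $D(n)$, all of which are bounded because $B \in W^{2,6}$ has compact support and $\partial\Omega$ is smooth, so they are absorbed by Gronwall and smallness of $C_0$ exactly as in \cite[Theorem 1.4]{CCL1}.

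Finally, with the a priori bounds in hand, the global existence, uniqueness, regularity \eqref{dt5}--\eqref{dt6} and the large-time behavior \eqref{qa1w}, \eqref{qa2w} follow by the same continuation argument as for Theorems \ref{th1}--\ref{th2}: local existence for \eqref{a1}--\eqref{h2}, \eqref{ch2}, \eqref{ch101} with the modified initial compatibility (the first line of \eqref{dt1} replaced by the membership condition with $\curl f \times n = -Au$), then extension to $[0,\infty)$ using the closed a priori estimates. I expect the main obstacle to be bookkeeping step (iii): tracking the boundary commutator terms produced when one differentiates the momentum equation and tests against $\dot u$ or applies the $W^{2,q}$-elliptic estimates, and verifying that the terms carrying $\nabla B$, $\nabla^2 B$ and the second fundamental form of $\partial\Omega$ are genuinely lower order and do not spoil the smallness-closed inequalities — but since $A$ (equivalently $B$) has compact support and the geometry of $\partial\Omega$ is fixed, these are structurally identical to the corresponding estimates in \cite{CCL1}, and no new idea beyond that reference is required; this is why the statement can legitimately be obtained "immediately" as claimed.
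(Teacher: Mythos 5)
Your proposal takes essentially the same route as the paper, which proves Theorem \ref{th3} simply by invoking the Korn-type equivalence $\|\nabla v\|_{L^2}\simeq \|D(v)\|_{L^2}$ of \cite{DMR1} and repeating the argument of \cite[Theorem 1.4]{CCL1} adapted to the exterior-domain estimates of Theorems \ref{th1}--\ref{th2}, with $B\ge 0$ supplying the good sign of the boundary term and $2\mu+3\lambda>0$ supplying coercivity. Two cosmetic corrections: the relevant quadratic form in the $D(u)$-formulation is $2\mu\|D(u)\|_{L^2}^2+\lambda\|\div u\|_{L^2}^2$, whose coercivity over $\|D(u)\|_{L^2}^2$ (via $(\div u)^2\le 3|D(u)|^2$) is exactly where $2\mu+3\lambda>0$ enters, rather than $\mu\|D(u)\|_{L^2}^2+(\mu+\lambda)\|\div u\|_{L^2}^2$; and Theorem \ref{th3} assumes only $B\in W^{2,6}(\Omega)$, not compact support of $B$, which is harmless for your step (iii) since the boundary integrals live on the compact set $\partial\Omega$.
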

As a direct result of Theorem \ref{th3}, for compressible Navier-Stokes equations \eqref{a1} with Navier-type slip boundary condition \eqref{Navi}, we give the following conclusion on the global existence and large-time behavior of classical solutions.
\begin{corollary} Assume that $\vartheta\in W^{2,6}$, the shear viscosity coefficient $\mu$ and the bulk one $\lambda$ satisfy one of the following two conditions:

(1)  $A=\vartheta I-2D(n)$ satisfies the assumption given by Remark \ref{u0A}.

(2)  $2\mu+3\lambda> 0$, $\vartheta\geq 0$.

Then, for the system \eqref{a1}-\eqref{h2} with Navier-type slip boundary condition \eqref{Navi} and the far field behavior \eqref{ch2}, the conclusions of Theorems \ref{th1}-\ref{th3} hold.
\end{corollary}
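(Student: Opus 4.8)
The plan is to deduce the Corollary directly from Theorem \ref{th3} by exhibiting, in each of the two cases, a matrix $A$ of the precise form required there, namely $A=B-2D(n)$ with $B\in W^{2,6}(\Omega)$ a positive semi-definite symmetric matrix, and then translating the slip condition \eqref{ch101} into the Navier-type condition \eqref{Navi} with friction $\vartheta$. The starting observation is the purely algebraic identity relating the two formulations of the boundary condition: on $\partial\Omega$ one has, for any vector field $v$ with $v\cdot n=0$, the pointwise relation
\be\la{cor1}
(\curl v\times n)_{tan}+2(D(v)\,n)_{tan}=2(D(n)\,v)_{tan},
\ee
which is exactly the computation used in \cite{CCL1} to pass between \eqref{ch1} and \eqref{Navi}. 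Hence the Navier condition $(2D(u)\,n+\vartheta u)_{tan}=0$ is equivalent to $\curl u\times n=-(\vartheta I-2D(n))u$ on $\partial\Omega$, i.e. to \eqref{ch101} with $A=\vartheta I-2D(n)$.

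First I would treat case (1). Here one simply sets $A=\vartheta I-2D(n)$ and invokes Remark \ref{u0A}: by hypothesis this $A$ satisfies the conditions imposed there (compact support, $A\in W^{2,6}$, and the structural assumptions of Remark 1.3 in \cite{CCL1}), and $\vartheta\in W^{2,6}$ guarantees the regularity; since $D(n)\in W^{2,6}$ because $\partial\Omega$ is smooth, $A$ is a legitimate choice. Then Remark \ref{u0A} applies verbatim and yields the conclusions of Theorems \ref{th1}--\ref{th2}, hence also of Theorem \ref{th3}, for the Navier problem.

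Next I would treat case (2). Now I write $A=\vartheta I-2D(n)=B-2D(n)$ with $B=\vartheta I$. Since $\vartheta\ge 0$, the matrix $B=\vartheta I$ is pointwise positive semi-definite and symmetric, and $B\in W^{2,6}(\Omega)$ because $\vartheta\in W^{2,6}$; thus $A$ has exactly the form demanded in Theorem \ref{th3}. The remaining hypothesis of Theorem \ref{th3} is the viscosity restriction $2\mu+3\lambda>0$, which is precisely what is assumed in case (2). Therefore Theorem \ref{th3} applies directly and gives the global existence and large-time behavior for \eqref{a1}-\eqref{h2} with \eqref{Navi} and \eqref{ch2}. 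Combining the two cases, the conclusions of Theorems \ref{th1}--\ref{th3} hold under either assumption, which is the assertion of the Corollary.

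The only genuinely non-routine point is verifying the equivalence \eqref{cor1} between the vorticity-type slip condition and the strain-type Navier condition on a curved boundary; this is where the geometry of $\partial\Omega$ (through $D(n)$, equivalently the second fundamental form) enters, and it is the step one must get right, but it is already established in \cite{CCL1}, so I would cite it rather than reprove it. Everything else is a matter of checking that the chosen $B$ lies in $W^{2,6}$ and is positive semi-definite, which is immediate from $\vartheta\in W^{2,6}$ and $\vartheta\ge 0$ respectively, together with the smoothness of $\partial\Omega$ ensuring $D(n)\in W^{2,6}$.
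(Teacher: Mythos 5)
Your argument is correct and is essentially the paper's own proof: the Corollary is treated there as a direct consequence of Theorem \ref{th3} (your case (2), taking $B=\vartheta I$, which is symmetric, positive semi-definite since $\vartheta\ge 0$, and lies in $W^{2,6}$) and of Remark \ref{u0A} (your case (1)), via the equivalence of \eqref{Navi} with \eqref{ch101} for $A=\vartheta I-2D(n)$ established in \cite{CCL1}. One small caveat: the intermediate identity you display has a sign error — for $v\cdot n=0$ on $\partial\Omega$ the correct relation is $(\curl v\times n)_{tan}=2(D(v)\,n)_{tan}+2(D(n)\,v)_{tan}$ (check it on a flat boundary, where $D(n)=0$), and it is this version that yields $\curl u\times n=-(\vartheta I-2D(n))u$ under \eqref{Navi}; since the equivalence you then invoke (and cite from \cite{CCL1}) is the correct one, the rest of your argument is unaffected.
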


We now comment on the analysis of this paper. Indeed, compared with the previous results (\cite{CCL1}) where we treated the problem in a simply connected bounded domain, since the domain is unbounded, there are two difficulties that we have to overcome: one of them is how to estimate $\nabla u$ by means of $\div u$ and $\curl u$, the other one is how to control the boundary integrals, especially (see \eqref{cax31}), $$\int_{\partial\Omega}\sigma^{m}(\nabla F\cdot u)(\dot{u}\cdot n)ds.$$
For the former, we establish some necessary inequalities in Section 2 related to $\nabla u$, $\div u$ and $\curl u$, see Lemmas \ref{crle3}-\ref{crle5}. For the latter, we introduce a smooth 'cut-off' function defined on a ball containing $\r^3-\Omega$, which not only enables us to eliminate the first derivative in the boundary integral above by using the method in \cite{CCL1} based on the divergence theorem and the fact $u=u^\perp\times n$ (see \eqref{bdd2}), but also reduces the boundary integral to the integral on the ball. The reader can see \eqref{cax31} for details. All these treatments are the key to achieve our purposes in this paper.
\section{Preliminaries}\la{se2}
\subsection{Some known inequalities and facts}

In this subsection, some facts and elementary inequalities, which will be used frequently later, are collected.

First, similar to the proof of \cite[Theorem 1.4]{hxd1}, we have the local existence of strong and classical solutions.
\begin{lemma}\la{loc1} Let $\Omega$ be as in Theorem \ref{th1}, assume that $(\n_0,u_0)$ satisfies \eqref{dt1} and \eqref{dt3}. Then there exist a small time $T>0$ and a unique strong solution $(\n,u)$ to the problem \eqref{a1}-\eqref{ch1} on $\Omega\times(0,T]$ satisfying for any $ \tau\in(0,T),$
\be\nonumber\begin{cases}
 (\rho-\rho_\infty,P-P(\rho_\infty))\in C[0,\infty);H^2\cap W^{2,q} ),\\ u\in C([0,\infty);D^1\cap D^2 ),\,\,\,\na u\in  L^2(0,T; H^2)\cap  L^{p_0}(0,T; W^{2,q}),\\
 \nabla u\in L^{\infty}(\tau,T;H^2\cap W^{2,q})\\
u_t\in L^{\infty}(\tau,T; D^1\cap D^2)\cap H^1(\tau,T; D^1),\\   \sqrt{\n}u_t\in L^\infty(0,T;L^2),
\end{cases}\ee
where $q\in(3,6)$ and $p_0=\frac{9q-6}{10q-12}\in(1,\frac{7}{6}).$
\end{lemma}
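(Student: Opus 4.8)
\textbf{Proof proposal for Lemma \ref{loc1} (local existence).}

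The plan is to follow the now-standard scheme for the compressible Navier--Stokes system with possible vacuum, adapting the construction of \cite{hxd1} to the exterior domain $\Omega=\r^3\setminus\bar D$ with slip boundary condition \eqref{ch1} and nonzero far field $\rho_\infty$. First I would reduce to the case $\rho_\infty=0$ in the analytic estimates by working with the perturbation $\rho-\rho_\infty$ and $P-P(\rho_\infty)$, which lie in $H^2\cap W^{2,q}$ by \eqref{dt1}; the equation \eqref{a1}$_2$ is unchanged since $\na P=\na(P-P(\rho_\infty))$, and \eqref{a1}$_1$ becomes a transport equation for $\rho-\rho_\infty$ with source $-(\rho-\rho_\infty)\div u-\rho_\infty\div u$. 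The lack of a positive lower bound on $\rho_0$ is handled by the usual regularization: replace $\rho_0$ by $\rho_0^\de=\rho_0+\de$ for $\de\in(0,1)$, solve the (now non-degenerate) problem, derive estimates uniform in $\de$, and pass to the limit $\de\to0$.

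For fixed $\de>0$ the construction is by linearization and iteration. Given an approximate velocity $v$ (with $v\cdot n=0$, $\curl v\times n=0$ on $\partial\Omega$, and $v\to0$ at infinity), solve the linear transport equation for $\rho$ by the method of characteristics — using that the flow map of $v$ preserves $\Omega$ because $v\cdot n=0$ — which gives $\rho-\rho_\infty\in C([0,T];H^2\cap W^{2,q})$ with $\rho\ge\de e^{-\int_0^t\|\div v\|_{L^\infty}}>0$; then solve the linear parabolic system
\[
\rho(u_t+v\cdot\na u)-\mu\triangle u-(\mu+\lambda)\na\div u=-\na P(\rho)
\]
for $u$ subject to \eqref{ch1}. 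The key linear elliptic/parabolic input is the $L^p$ and $H^k$ regularity for the Lam\'e operator with the boundary conditions $u\cdot n=0$, $\curl u\times n=0$ on the smooth boundary $\partial\Omega$; here I would invoke the elliptic estimates together with the inequalities relating $\na u$ to $\div u$ and $\curl u$ announced as Lemmas \ref{crle3}--\ref{crle5} in Section~\ref{se2}, which are exactly what make the slip problem on an exterior domain tractable. The compatibility condition \eqref{dt3} is used precisely to give $\sqrt\rho u_t\in L^\infty(0,T;L^2)$ at $t=0$ (one tests the momentum equation with $u_t$ and uses \eqref{dt3} to bound the initial value of $\int\rho|u_t|^2$), and differentiating the momentum equation in $t$ and testing with $u_t$ yields the higher-order bounds $u_t\in L^\infty(\tau,T;D^1\cap D^2)\cap H^1(\tau,T;D^1)$ and $\na u\in L^\infty(\tau,T;H^2\cap W^{2,q})$ away from $t=0$, while $\na u\in L^2(0,T;H^2)\cap L^{p_0}(0,T;W^{2,q})$ with the stated $p_0$ follows from elliptic regularity applied to $-\mu\triangle u-(\mu+\lambda)\na\div u=\rho\dot u-\na P$ and a short computation tracking the time-integrability coming from the weight $\sigma$ near $t=0$ (this pins down $p_0=\frac{9q-6}{10q-12}$).

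One then shows the iteration map is a contraction in a low-norm space (e.g.\ $u$ in $C([0,T];L^2)\cap L^2(0,T;D^1)$ and $\rho-\rho_\infty$ in $C([0,T];L^2)$) for $T$ small, obtaining a unique fixed point; uniqueness in the stated class follows by the same energy-type difference estimate. Finally, passing $\de\to0$ using the $\de$-uniform bounds gives the solution with the possibly-vanishing density $\rho_0$, and lower semicontinuity preserves $0\le\rho\le\bar\rho$ on the (possibly shorter) existence interval. The main obstacle, and the place where the exterior-domain geometry genuinely enters, is establishing the linear elliptic regularity and the coercivity estimate $\|\na u\|\lesssim\|\div u\|+\|\curl u\|$ (plus lower-order terms) uniformly up to the unbounded end and up to the boundary under \eqref{ch1}; everything else is a routine adaptation of \cite{hxd1}, which is why the lemma is stated with only a reference to that proof.
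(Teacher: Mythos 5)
Your proposal is correct and follows essentially the same route as the paper, which gives no independent argument for Lemma \ref{loc1} but simply invokes the local existence theory of \cite{hxd1} (itself based on the standard linearization--iteration scheme with density regularization and compatibility condition that you outline), adapted to the exterior domain and slip boundary condition via the div-curl and elliptic estimates of Section \ref{se2}. Your sketch is a faithful reconstruction of that adaptation, including the correct identification of where the exterior-domain geometry and the condition \eqref{dt3} enter.
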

The following Lemma  can be found in \cite{fcpm}.
\begin{lemma}
[Gagliardo-Nirenberg]\la{l1} Assume that $\Omega$ is the exterior of a simply connected domain $D$ in $\r^3$. For  $p\in [2,6],\,q\in(1,\infty) $ and
$ r\in  (3,\infty),$ there exists some generic
 constant
$C>0$ which may depend  on $p,q $ and $r$ such that for  any $f\in H^1({\O }) $
and $g\in  L^q(\O )\cap D^{1,r}(\O), $
\be\la{g1}\|f\|_{L^p(\O)}\le C\|f\|_{L^2}^{\frac{6-p}{2p}}\|\na
f\|_{L^2}^{\frac{3p-6}{2p}},\ee
\be\la{g2}\|g\|_{C\left(\ol{\O }\right)} \le C
\|g\|_{L^q}^{q(r-3)/(3r+q(r-3))}\|\na g\|_{L^r}^{3r/(3r+q(r-3))}.
\ee
\end{lemma}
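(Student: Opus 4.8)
The final statement to prove is Lemma \ref{l1}, the Gagliardo–Nirenberg inequalities \eqref{g1} and \eqref{g2} on the exterior domain $\Omega$.

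The plan is to reduce both inequalities to their classical counterparts on $\r^3$ (or on a bounded Lipschitz domain) by means of a bounded linear extension operator. First I would invoke the fact that since $\partial\Omega$ is smooth and $\Omega$ is an exterior domain, there is a bounded extension operator $E\colon H^1(\Omega)\to H^1(\r^3)$ with $\|Ef\|_{H^1(\r^3)}\le C\|f\|_{H^1(\Omega)}$; one can moreover arrange (by composing with a cutoff that equals $1$ near $\partial\Omega$) that the extension does not blow up the $L^2$-norm of the gradient relative to that of $f$. For \eqref{g1}, applying the standard interpolation inequality $\|h\|_{L^p(\r^3)}\le C\|h\|_{L^2(\r^3)}^{(6-p)/2p}\|\nabla h\|_{L^2(\r^3)}^{(3p-6)/2p}$ for $p\in[2,6]$ to $h=Ef$ and then restricting back to $\Omega$ gives the result, once one checks that the extension respects both norms on the right-hand side — the cleanest route is to use an extension that is simultaneously bounded $L^2(\Omega)\to L^2(\r^3)$ and $D^{1,2}(\Omega)\to D^{1,2}(\r^3)$, which is available for exterior domains with smooth boundary.

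For \eqref{g2}, the embedding $W^{1,r}(\r^3)\hookrightarrow C(\r^3)$ for $r>3$, combined with the Gagliardo–Nirenberg estimate controlling $\|h\|_{L^\infty}$ by a product of $\|h\|_{L^q}$ and $\|\nabla h\|_{L^r}$ with the stated exponents $q(r-3)/(3r+q(r-3))$ and $3r/(3r+q(r-3))$ (these are exactly the scaling-determined weights), applied to the extension of $g$, yields the bound on $\|g\|_{C(\bar\Omega)}$. Here one needs an extension bounded on $L^q(\Omega)\cap D^{1,r}(\Omega)$; a Stein-type universal extension operator works on all the relevant spaces simultaneously, so the same $E$ serves for both \eqref{g1} and \eqref{g2}. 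Alternatively, since the only genuine subtlety is near the compact boundary, one can split $f=\phi f+(1-\phi)f$ with $\phi$ supported in a bounded neighborhood of $\partial\Omega$: on the bounded piece one uses Gagliardo–Nirenberg on a bounded smooth domain, and $(1-\phi)f$ extends trivially by zero to all of $\r^3$.

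The main obstacle, such as it is, is purely bookkeeping: ensuring the extension operator is bounded with the correct homogeneity on the \emph{homogeneous} spaces $D^{1,r}$ rather than merely on $W^{1,r}$, so that the constant $C$ genuinely does not depend on the size of $f$ and the inequalities are scale-consistent. This is handled by choosing the extension to be supported in a fixed bounded set outside $\Omega$ and controlling $\|\nabla(Ef)\|_{L^r(\r^3)}$ by $\|\nabla f\|_{L^r(\Omega)}+\|f\|_{L^r(\text{bdd collar})}$, the latter being absorbed via Poincaré–Sobolev on the collar; for the exterior domain this is standard (see, e.g., the references on exterior-domain Sobolev embeddings such as \cite{fcpm}), so no new ideas are required beyond invoking it carefully.
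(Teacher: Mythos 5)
You should note at the outset that the paper offers no proof of this lemma at all: it is quoted from \cite{fcpm}, so your argument has to stand entirely on its own. Your overall plan --- reduce to the whole-space inequalities through an extension operator that is bounded separately in each of the two norms appearing on the right-hand side --- is viable, but the justification you give for that decisive boundedness fails as written, and with only what you wrote you obtain strictly weaker, additive versions of \eqref{g1}--\eqref{g2}. Concretely: (i) ``composing with a cutoff that equals $1$ near $\partial\Omega$'' cannot produce gradient-only control of the extension, since multiplying by a cutoff $\phi$ creates the term $f\nabla\phi$, which is bounded by $\|f\|_{L^2}$ and not by $\|\nabla f\|_{L^2}$; (ii) in your last paragraph the term $\|f\|_{L^r(\mathrm{collar})}$ cannot be ``absorbed via Poincar\'e--Sobolev on the collar'': Poincar\'e on the bounded collar controls only the oscillation $\|f-\bar f\|_{L^r}$, never $\|f\|_{L^r}$ itself (constants already defeat it). If you run your argument with an extension satisfying only $\|\nabla(Ef)\|_{L^2(\r^3)}\le C\bigl(\|\nabla f\|_{L^2(\Omega)}+\|f\|_{L^2(\mathrm{collar})}\bigr)$, the conclusion is $\|f\|_{L^p}\le C\|f\|_{L^2}^{\frac{6-p}{2p}}\bigl(\|f\|_{L^2}+\|\nabla f\|_{L^2}\bigr)^{\frac{3p-6}{2p}}$, and similarly for \eqref{g2}; these are not the stated product-form inequalities, and in an exterior domain you cannot upgrade them by scaling, since dilations move the hole. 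The same defect afflicts your alternative splitting $f=\phi f+(1-\phi)f$: Gagliardo--Nirenberg on a bounded domain is not of pure multiplicative form, and the commutator terms are again only $\|f\|_{L^2}$-controlled.

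The gap is fixable, and the fix is close to what you gesture at: build the mean value into the extension instead of trying to absorb it afterwards. Fix a connected collar $A=\Omega\cap B_{2R}$ with $\bar D\subset B_R$, let $\bar f_A$ be the average of $f$ over $A$, and define $Ef=f$ in $\Omega$ and $Ef=\bar f_A+E_0\bigl(f|_A-\bar f_A\bigr)$ in $D$, where $E_0$ is a Stein-type extension operator for $A$, bounded on $L^s$ and on $W^{1,s}$ for every $s$. Because the constant $\bar f_A$ extends with zero gradient, Poincar\'e--Wirtinger on $A$ yields $\|\nabla(Ef)\|_{L^s(\r^3)}\le C\|\nabla f\|_{L^s(\Omega)}$, while $\|Ef\|_{L^s(\r^3)}\le C\|f\|_{L^s(\Omega)}$ is retained; taking $s=2$ and applying the whole-space inequality to $Ef$ gives \eqref{g1} (alternatively, prove $\|f\|_{L^6(\Omega)}\le C\|\na f\|_{L^2(\Omega)}$ this way and then interpolate between $L^2$ and $L^6$ by H\"older, which produces exactly the exponents in \eqref{g1}), and taking $s=q$ and $s=r$, together with $g|_A\in W^{1,r}(A)$ by Morrey, gives \eqref{g2}. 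With this correction your route is a legitimate self-contained proof of the lemma, which the paper itself only cites from \cite{fcpm}.
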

Generally, \eqref{g1} and \eqref{g2} are called Gagliardo-Nirenberg's inequalities.
%

Next,  in order to arrive at the
uniform (in time) upper bound of the density $\n,$ we need the following Zlotnik  inequality \cite{zl1}.
\begin{lemma}\la{le1}   Assume that $ g\in C(R)$ and $y,b\in W^{1,1}(0,T)$, and the function $y$ satisfy
\bnn y'(t)= g(y)+b'(t) \mbox{  on  } [0,T] ,\quad y(0)=y^0, \enn
If $g(\infty)=-\infty$
and \be\la{a100} b(t_2) -b(t_1) \le N_0 +N_1(t_2-t_1)\ee for all
$0\le t_1<t_2\le T$
  with some $N_0\ge 0$ and $N_1\ge 0,$ then
\bnn y(t)\le \max\left\{y^0,\overline{\zeta} \right\}+N_0<\infty
\mbox{ on
 } [0,T],
\enn
  where $\overline{\zeta} $ is a constant such
that \be\la{a101} g(\zeta)\le -N_1 \quad\mbox{ for }\quad \zeta\ge \overline{\zeta}.\ee
\end{lemma}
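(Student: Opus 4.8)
The plan is to run a direct comparison argument on the scalar ODE, exploiting that $g(\zeta)\le -N_1$ once $\zeta$ is large enough. First I would record the analytic preliminaries: since $y,b\in W^{1,1}(0,T)$ they are absolutely continuous, so that for all $0\le t_1<t_2\le T$ one has $y(t_2)-y(t_1)=\int_{t_1}^{t_2}\bigl(g(y(s))+b'(s)\bigr)\,ds$ and $b(t_2)-b(t_1)=\int_{t_1}^{t_2}b'(s)\,ds$; note also that $g\circ y$ is continuous, hence integrable on $[0,T]$. Because $g(\infty)=-\infty$, fix once and for all a constant $\overline{\zeta}$ with $g(\zeta)\le -N_1$ for every $\zeta\ge\overline{\zeta}$ (this is precisely \eqref{a101}), and set $M_0:=\max\{y^0,\overline{\zeta}\}+N_0$; the goal is to show $y(t)\le M_0$ on $[0,T]$.

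Next I would fix an arbitrary $t_0\in(0,T]$ and distinguish two cases according to whether $y$ stays above $\overline{\zeta}$ throughout $[0,t_0]$. In the first case $y(t)>\overline{\zeta}$ for all $t\in[0,t_0]$, so $g(y(t))\le -N_1$ there; integrating the ODE from $0$ to $t_0$ and using \eqref{a100},
\[
y(t_0)-y^0\le -N_1 t_0+\bigl(b(t_0)-b(0)\bigr)\le N_0,
\]
hence $y(t_0)\le y^0+N_0\le M_0$. In the second case, if $y(t_0)\le\overline{\zeta}$ we are immediately done; otherwise put $t_1:=\sup\{t\in[0,t_0]:y(t)\le\overline{\zeta}\}$, which is nonempty and, by continuity of $y$, belongs to the (closed) sublevel set, so $y(t_1)\le\overline{\zeta}$ and $t_1<t_0$. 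On $(t_1,t_0]$ we have $y>\overline{\zeta}$, hence $g(y)\le -N_1$, and integrating from $t_1$ to $t_0$ together with \eqref{a100} gives
\[
y(t_0)-y(t_1)\le -N_1(t_0-t_1)+\bigl(b(t_0)-b(t_1)\bigr)\le N_0,
\]
so $y(t_0)\le\overline{\zeta}+N_0\le M_0$. Since $t_0$ was arbitrary and $y(0)=y^0\le M_0$, the bound $y\le M_0$ holds on $[0,T]$, which is the assertion.

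I do not expect a genuine obstacle here; the only points needing care are the regularity bookkeeping — that $W^{1,1}$ functions are absolutely continuous, so the fundamental theorem of calculus applies, and that $g\circ y$ is measurable and locally integrable so the integrations above are legitimate — and the elementary topological step that the ``last time at which $y$ is $\le\overline{\zeta}$'' is well defined and attained, which again uses continuity of $y$. One should also observe that $\overline{\zeta}$, and hence $M_0$, is chosen after $N_1$ is fixed, so there is no circularity, and that the hypothesis $g(\infty)=-\infty$ is exactly what supplies such a $\overline{\zeta}$; note that no monotonicity of $g$ is used.
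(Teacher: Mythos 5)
Your proof is correct. Note that the paper itself gives no proof of this lemma---it is quoted as Zlotnik's inequality with a reference to \cite{zl1}---so there is no in-paper argument to compare against; your comparison argument (integrating the equation over $[t_1,t_0]$, where $t_1$ is the last time $y$ lies at or below $\overline{\zeta}$, and using \eqref{a100} so that the $N_1(t_0-t_1)$ growth of $b$ is cancelled by $g(y)\le -N_1$ from \eqref{a101}) is the standard proof of Zlotnik's lemma. The details you flag are exactly the right ones and are handled correctly: absolute continuity of $y,b\in W^{1,1}(0,T)$ justifies the integral form of the equation, continuity of $g\circ y$ makes the integrals legitimate, and closedness of the sublevel set $\{y\le\overline{\zeta}\}$ gives $y(t_1)\le\overline{\zeta}$ at the supremum, so the case analysis is exhaustive and the bound $y(t_0)\le\max\{y^0,\overline{\zeta}\}+N_0$ follows for every $t_0$.
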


Consider the Neumann boundary value problem
\be\la{cxtj1}\begin{cases}
-\Delta v=\div f\,\, &\text{in}~ \Omega, \\
\frac{\partial v}{\partial n}=-f\cdot n\,&\text{on}~\partial\Omega,\\
\nabla v\rightarrow0,\,\,as\,\,|x|\rightarrow\infty,
\end{cases} \ee
where $v=(v^{1},v^{2},v^{3})$ and $f=(f^{1},f^{2},f^{3})$. Indeed, the problem is equivalent to
$$\int\nabla v\cdot\nabla\eta dx=\int f\cdot\nabla\eta dx,\,\,\forall\eta\in C_0^{\infty}(\r^3).$$
Thanks to \cite[Lemma 5.6]{ANIS}, we have the following conclusion.
\begin{lemma}   \la{zhle}
For the system \eqref{cxtj1}, one has

(1) If $f\in L^q$ for some $q\in(1,\infty),$ then there exists a unique (modulo constants) solution $v\in D^{1,q}$ such that
$$\|\nabla v\|_{L^q}\leq C(q,\Omega)\|f\|_{L^q}.$$

(2) If $f\in W^{k,q}$ for some $q\in(1,\infty),\,k\geq 1$, then $\na v\in W^{k,q}$ and
$$\|\nabla v\|_{W^{k,q}}\leq C \|f\|_{W^{k,q}}.$$
\end{lemma}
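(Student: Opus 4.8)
The final statement to prove is Lemma \ref{zhle} concerning the Neumann boundary value problem \eqref{cxtj1} in the exterior domain $\Omega$. Since this lemma is attributed to \cite[Lemma 5.6]{ANIS}, the natural approach is to cite and adapt that reference, but let me sketch how the argument itself would go so the citation is supported by a clear path.

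\textbf{Plan of proof.} The plan is to build the solution via the variational formulation $\int \nabla v \cdot \nabla \eta\, dx = \int f \cdot \nabla \eta\, dx$ for all $\eta \in C_0^\infty(\r^3)$, and then bootstrap regularity. First I would treat the case $q = 2$: the space $D^{1,2}(\Omega)$ (modulo constants) with inner product $(\nabla \cdot, \nabla \cdot)_{L^2}$ is a Hilbert space by the exterior-domain Poincaré/Sobolev inequality $\|v\|_{L^6} \le C\|\nabla v\|_{L^2}$, so the Lax--Milgram theorem yields a unique $v \in D^{1,2}$ with $\|\nabla v\|_{L^2} \le C\|f\|_{L^2}$. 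For general $q \in (1,\infty)$, the estimate $\|\nabla v\|_{L^q} \le C(q,\Omega)\|f\|_{L^q}$ follows from Calderón--Zygmund theory for the Neumann problem on the exterior domain: one localizes using a smooth cut-off separating a neighborhood of $\partial\Omega$ from the far field, applies interior $L^q$ estimates and boundary $L^q$ estimates (for the half-space model problem, then flattened boundary) in the bounded part, and uses the Newtonian potential / Riesz transform bounds in the exterior part where the geometry is trivial; a duality argument handles $q < 2$. This is exactly the content packaged in \cite[Lemma 5.6]{ANIS}.

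\textbf{Higher regularity.} For part (2), with $f \in W^{k,q}$, $k \ge 1$, I would differentiate the equation. Tangential derivatives of $v$ near the boundary satisfy a Neumann problem of the same type with right-hand side involving $\nabla^k f$ plus lower-order terms absorbed by part (1), and the normal derivative is recovered from the equation $-\Delta v = \div f$ itself (solving for $\partial_n^2 v$ in terms of tangential second derivatives and $\div f$). Iterating on $k$ and using the elliptic estimate $\|\nabla v\|_{W^{k,q}} \le C(\|\div f\|_{W^{k-1,q}} + \|\nabla v\|_{W^{k-1,q}} + \text{boundary data})$ gives $\|\nabla v\|_{W^{k,q}} \le C\|f\|_{W^{k,q}}$. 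Again this is standard elliptic regularity up to the boundary combined with the exterior-domain bound from part (1), and is precisely \cite[Lemma 5.6]{ANIS}.

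\textbf{Main obstacle.} The only genuinely non-routine point is the uniform control of constants in the $L^q$ estimate as one passes to the unbounded domain: in a bounded domain the Neumann problem requires a compatibility condition and the solution is unique only modulo constants, whereas here the decay condition $\nabla v \to 0$ at infinity selects the right solution class $D^{1,q}$ and no compatibility condition on $f$ is needed because $f \in L^q$ already encodes enough decay. Verifying that the localization argument does not produce a constant depending badly on the (artificial) cut-off radius — i.e. that the far-field piece is handled by translation-invariant singular integral bounds with an absolute constant — is the technical heart. Since all of this is carried out in detail in \cite[Lemma 5.6]{ANIS}, I would simply invoke that result, as stated.
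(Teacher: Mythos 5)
Your proposal is correct and follows essentially the same route as the paper: the paper offers no independent argument for this lemma, stating it directly as a consequence of \cite[Lemma 5.6]{ANIS} (after noting the equivalent variational formulation), which is precisely the citation you invoke. Your additional sketch of the Lax--Milgram, localization/Calder\'on--Zygmund, and elliptic-bootstrap steps is a reasonable outline of what lies behind that reference, but it goes beyond what the paper itself records.
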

\begin{definition}
Let $\Omega$ be a domain in $\r^3$. If the first Betti number of $\Omega$ vanishes, namely, any simple closed curve in $\Omega$ can be contracted to a point, we say that $\Omega$ is simply connected. If the second Betti number of $\Omega$ is zero, we say that $\Omega$ has no holes.
\end{definition}
The following two lemmas are given in \cite{vww,CANEHS}, more precisely, Theorem 3.2 in \cite{vww} and  Propositions 2.6-2.9 in\cite{CANEHS}.
\begin{lemma}   \la{crle3}
Let $k\geq0$ be a integer, $1<q<+\infty$, and assume that $D$ is a simply connected bounded domain in $\r^3$ with $C^{k+1,1}$ boundary $\partial D$. Then for $v\in W^{k+1,q}(D)$ with $v\cdot n=0$ on $\partial D$, there exists a constant $C=C(q,k,D)$ such that
\be\|v\|_{W^{k+1,q}(D)}\leq C(\|\div v\|_{W^{k,q}(D)}+\|\curl v\|_{W^{k,q}(D)}).\ee
In particular, for $k=0$, we have
\be \la{paz1}\|\nabla v\|_{L^q(D)}\leq C(\|\div v\|_{L^q(D)}+\|\curl v\|_{L^q(D)}).\ee
\end{lemma}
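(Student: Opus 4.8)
The statement to prove is Lemma \ref{crle3}, the Bogovskii/div-curl estimate on a simply connected bounded domain $D$: for $v\in W^{k+1,q}(D)$ with $v\cdot n=0$ on $\partial D$,
$$\|v\|_{W^{k+1,q}(D)}\leq C(\|\div v\|_{W^{k,q}(D)}+\|\curl v\|_{W^{k,q}(D)}).$$

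\textbf{Approach.} The plan is to reduce the full gradient estimate to known elliptic regularity for a Hodge-type decomposition, then close the argument with a compactness contradiction to absorb the lower-order term. First I would observe that since $D$ is simply connected with smooth (at least $C^{k+1,1}$) boundary and $v\cdot n=0$ on $\partial D$, the vector field $v$ can be recovered from its divergence $f:=\div v$ and its curl $\omega:=\curl v$ via a decomposition $v=\nabla\varphi+\curl\psi$, where $\varphi$ solves the Neumann problem $\Delta\varphi=f$ in $D$, $\partial_n\varphi=0$ on $\partial D$ (consistency of the compatibility condition $\int_D f=\int_{\partial D}v\cdot n=0$ holds because $v\cdot n=0$), and $\psi$ is a vector potential for the solenoidal remainder, chosen in an appropriate gauge. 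The simple connectivity of $D$ guarantees there is no harmonic (cohomological) obstruction, so this decomposition is exact up to constants. By standard Agmon–Douglis–Nirenberg $L^q$ elliptic estimates, $\|\nabla^2\varphi\|_{W^{k-1,q}}\le C\|f\|_{W^{k,q}}$ and similarly the vector potential $\psi$ obeys $\|\nabla\psi\|_{W^{k+1,q}}\le C\|\omega\|_{W^{k,q}}$ once the right boundary condition for $\psi$ is imposed, which together yields control of $\|\nabla v\|_{W^{k,q}}$ by the right-hand side.

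\textbf{Key steps, in order.} (1) Verify the compatibility/consistency conditions so that the potentials exist, using $v\cdot n=0$. (2) Invoke the elliptic regularity for the scalar Neumann problem and for the vector-potential (Maxwell-type) system — these are the standard results of the cited references (Theorem 3.2 of \cite{vww}, Propositions 2.6–2.9 of \cite{CANEHS}) — to bound all derivatives $\nabla v$ in $W^{k,q}$ by $\|\div v\|_{W^{k,q}}+\|\curl v\|_{W^{k,q}}$. (3) Upgrade the gradient bound to a full $W^{k+1,q}$ bound on $v$ itself: this requires controlling $\|v\|_{L^q}$, and here one uses a standard compactness argument — if no such constant existed, take a sequence $v_m$ with $\|v_m\|_{W^{k+1,q}}=1$ but $\|\div v_m\|_{W^{k,q}}+\|\curl v_m\|_{W^{k,q}}\to 0$; by Rellich compactness extract $v_m\to v$ in $W^{k,q}$, deduce from step (2) that $\nabla v_m$ is Cauchy so $v_m\to v$ in $W^{k+1,q}$ with $\|v\|=1$, $\div v=\curl v=0$, $v\cdot n=0$; simple connectivity forces $v\equiv 0$, a contradiction. (4) Specialize to $k=0$ to record \eqref{paz1}.

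\textbf{Main obstacle.} The routine part is the elliptic regularity, which is simply quoted. The delicate point is step (3): one must know that a vector field on a simply connected bounded domain that is divergence-free, curl-free, and tangent to the boundary must vanish. This is exactly where simple connectivity enters — such a field is a gradient of a harmonic function with zero Neumann data, hence constant, and being a gradient it is zero; equivalently the relevant de Rham cohomology group is trivial. I would also need to be slightly careful that the compactness argument is applied on the fixed bounded domain $D$ (where Rellich holds), not on the exterior domain $\Omega$ — this lemma is stated for the bounded obstacle $D$ precisely so that compactness is available. With these observations the proof is complete; the authors presumably just cite \cite{vww,CANEHS} and omit the details.
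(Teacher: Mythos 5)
The paper itself contains no proof of this lemma: it is quoted verbatim from the literature (Theorem 3.2 of \cite{vww} and Propositions 2.6--2.9 of \cite{CANEHS}), exactly as you guessed in your closing sentence. Your sketch is, in outline, the standard argument behind those references and I see no genuine gap in it: the essential ingredients are (i) an a priori estimate of the form $\|v\|_{W^{k+1,q}(D)}\le C(\|\div v\|_{W^{k,q}(D)}+\|\curl v\|_{W^{k,q}(D)}+\|v\|_{L^q(D)})$ coming from $L^q$ elliptic theory for the $(\div,\curl)$ system with the tangential boundary condition, (ii) Rellich compactness on the \emph{bounded} domain $D$ (a point you rightly emphasize, since no such compactness is available on the exterior domain), and (iii) the fact that a field with $\div v=\curl v=0$ and $v\cdot n=0$ on a simply connected $D$ is the gradient of a harmonic function with zero Neumann data and hence vanishes, which is precisely where the hypothesis of simple connectivity (first Betti number zero) enters and which distinguishes this lemma from the companion result for $v\times n=0$, where instead the absence of holes (second Betti number zero) is needed. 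One minor caution: in your step (2) the decomposition $v=\nabla\varphi+\curl\psi$ and the asserted bound $\|\nabla\psi\|_{W^{k+1,q}}\le C\|\curl v\|_{W^{k,q}}$ are not automatic until a gauge and boundary condition for $\psi$ are fixed and the residual harmonic field is accounted for; the cleaner formulation is to take the lower-order-term estimate (i) directly from the cited references (or from Agmon--Douglis--Nirenberg theory for the first-order elliptic system) and let the compactness step (3) absorb $\|v\|_{L^q}$, which is in effect what your argument does anyway. So your proposal supplies a correct self-contained justification of a statement the authors simply cite.
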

\begin{lemma}   \la{crle4}
  Suppose that $D$ is a bounded domain in $\r^3$ and its $C^{k+1,1}$ boundary $\partial D$ only has a finite number of 2-dimensional connected components. Then for  an  integer  $k\geq0$  and $1<q<+\infty$, there exists a constant $C=C(q,k,D)$ such that every $v\in W^{{k+1,q}}(D)$ with $v\times n=0$ on $\partial D$ satisfies
$$\|v\|_{W^{k+1,q}(D)}\leq C(\|\div v\|_{W^{k,q}(D)}+\|\curl v\|_{W^{k,q}(D)}+\|v\|_{L^q(D)}).$$
In particular, if  $D$ has no holes, then
$$\|v\|_{W^{k+1,q}(D)}\leq C(\|\div v\|_{W^{k,q}(D)}+\|\curl v\|_{W^{k,q}(D)}).$$
\end{lemma}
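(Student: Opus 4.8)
\textbf{Proof proposal for Lemma \ref{crle4} (the $\curl$--$\div$ estimate for vector fields with $v\times n=0$).}

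The plan is to reduce the global inequality on $D$ to a collection of standard a priori estimates, exactly as in the Friedrichs/Gaffney--type arguments for the de Rham complex, and then identify the obstruction coming from the topology of $D$. First I would set up the underlying elliptic system: given $v\in W^{k+1,q}(D)$ with $v\times n=0$ on $\partial D$, consider the vector Laplacian $-\Delta v=\curl\curl v-\na\div v$. The boundary condition $v\times n=0$ is precisely the one that makes the pair $(\div v,\curl v)$ together with the normal trace of $v$ a complementing (Lopatinskii--Shapiro) boundary operator for $-\Delta$; hence by Agmon--Douglis--Nirenberg theory one obtains the $W^{k+1,q}$ regularity estimate
\be\nonumber \|v\|_{W^{k+1,q}(D)}\le C\left(\|\div v\|_{W^{k,q}(D)}+\|\curl v\|_{W^{k,q}(D)}+\|v\|_{L^q(D)}\right),\ee
which is the first displayed inequality. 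Alternatively, and perhaps more in the spirit of this paper, one cites Propositions 2.6--2.9 of \cite{CANEHS} directly, since the statement is taken verbatim from there; in a self-contained write-up I would instead derive it by a partition of unity, flattening the boundary, and invoking the half-space estimate for the Hodge system.

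Next I would remove the lower-order term $\|v\|_{L^q(D)}$ under the extra hypothesis that $D$ has no holes (second Betti number zero). The standard route is a compactness--uniqueness (Peetre--Tartar) argument: suppose the reduced inequality fails, so there is a sequence $v_j$ with $\|v_j\|_{W^{k+1,q}(D)}=1$ but $\|\div v_j\|_{W^{k,q}}+\|\curl v_j\|_{W^{k,q}}\to 0$. By the full inequality above, $\|v_j\|_{L^q}$ stays bounded below, and by Rellich's theorem a subsequence converges strongly in $L^q$ (and then, using the full estimate again, in $W^{k+1,q}$) to some $v$ with $\div v=0$, $\curl v=0$ in $D$ and $v\times n=0$ on $\partial D$. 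Such $v$ is a harmonic vector field with vanishing tangential trace; the space of these is isomorphic to the first de Rham cohomology $H^1(D;\mr)$ (equivalently, to $\mr^{b_1(D)}$ where $b_1$ is the first Betti number), which is trivial precisely when $D$ has no holes --- here is where the "no holes'' hypothesis (second Betti number zero is the relevant one for the tangential boundary condition, matching the paper's Definition) enters. Hence $v=0$, contradicting $\|v\|_{W^{k+1,q}}=1$ after passing to the limit; this yields the second displayed inequality.

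The main obstacle, and the step requiring the most care, is the identification of the kernel: one must verify that a field with $\div v=0$, $\curl v=0$, $v\times n=0$ and the stated Sobolev regularity is genuinely smooth and genuinely harmonic, and then correctly match the dimension of this space to the topological invariant of $D$ so that the "no holes'' condition is exactly what forces the kernel to vanish. For a paper of this kind it is cleanest simply to invoke \cite[Propositions 2.6--2.9]{CANEHS} and \cite{vww} wholesale, as the statement is quoted from there; the only thing I would double-check is that the regularity class $W^{k+1,q}$ with $1<q<\infty$ and the hypothesis that $\partial D$ has finitely many $2$-dimensional connected components are covered by those references (they are), and that the constant $C$ depends only on $q,k,D$, which is automatic from the compactness argument since no quantitative control of the cohomology is needed once it is known to vanish.
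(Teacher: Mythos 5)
The paper offers no proof of this lemma at all: it is quoted from the literature, attributed to Theorem 3.2 of \cite{vww} and Propositions 2.6--2.9 of \cite{CANEHS}. So your fallback of citing those references wholesale is precisely what the paper does, and your sketched alternative (Agmon--Douglis--Nirenberg estimate for the Hodge system with the tangential boundary condition to get the inequality with the $\|v\|_{L^q}$ term, then a Peetre--Tartar compactness--uniqueness argument to drop that term) is the standard way such results are proved; in outline it is a legitimate route.

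The one genuine slip is in the identification of the kernel, which is exactly the point of the second inequality. A field with $\div v=0$, $\curl v=0$ in $D$ and $v\times n=0$ on $\partial D$ is a harmonic field with vanishing \emph{tangential} trace, and the space of such fields is isomorphic to the relative cohomology $H^1(D,\partial D;\mr)\cong H^2(D;\mr)$, whose dimension is the \emph{second} Betti number of $D$ --- not to $H^1(D;\mr)\cong\mr^{b_1(D)}$ as you assert. (The space isomorphic to $H^1(D;\mr)$ is the one for the complementary condition $v\cdot n=0$, which is why Lemma \ref{crle3} assumes $D$ simply connected while Lemma \ref{crle4} assumes $D$ has no holes; the spherical shell, where $b_1=0$ but $\nabla(1/|x|)$ is a nonzero harmonic field normal to both boundary spheres, shows your stated isomorphism cannot be right.) Your parenthetical remark that the second Betti number is the relevant invariant is correct, but it contradicts the isomorphism you wrote, and as written the kernel-vanishing step --- the crux of removing $\|v\|_{L^q}$ --- rests on the wrong identification. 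The repair is simply to replace $H^1(D;\mr)$ by $H^1(D,\partial D;\mr)\cong H^2(D;\mr)$; with that correction the compactness argument closes as you describe.
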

The following conclusion is shown in \cite[Theorem 3.2]{vww}.
\begin{lemma}   \la{crle1}
Let $D$ be a simply connected domain in $\r^3$ with $C^{1,1}$ boundary, and $\Omega$ is the exterior of $D$. For $v\in D^{1,q}(\Omega)$ with $v\cdot n=0$ on $\partial\Omega$, it holds that
\be\la{ljq01}\|\nabla v\|_{L^{q}(\Omega)}\leq C(\|\div v\|_{L^{q}(\Omega)}+\|\curl v\|_{L^{q}(\Omega)})\,\,\,for\,\, any\,\, 1<q<3,\ee
and
$$\|\nabla v\|_{L^q(\Omega)}\leq C(\|\div v\|_{L^q(\Omega)}+\|\curl v\|_{L^q(\Omega)}+\|\nabla v\|_{L^2(\Omega)})\,\,\,for\,\, any\,\, 3\leq q<+\infty.$$
\end{lemma}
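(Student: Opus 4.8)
The plan is to localize the problem: a cut-off splits $v$ into a far-field part, which can be handled on all of $\r^3$, and a near-boundary part, which lives on a bounded simply connected domain where Lemma \ref{crle3} applies; the only coupling is a lower-order term on a fixed bounded annulus, which is then absorbed into $\|\na v\|_{L^2(\Omega)}$ when $q\ge3$ and eliminated altogether when $q<3$.

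More precisely, fix $R>0$ with $\r^3\setminus\Omega=\bar D\subset B_R$ and choose $\chi\in C_c^\infty(B_{2R})$ with $\chi\equiv1$ on $B_R$; write $v=\chi v+(1-\chi)v=:v_1+v_2$. Extending $v_2$ by zero across $\partial\Omega$ (legitimate since $v_2$ vanishes near $\partial\Omega$) gives $v_2\in D^{1,q}(\r^3)$; from the identity $-\Delta v_2=\curl\curl v_2-\na\div v_2$ each entry of $\na v_2$ is a homogeneous degree-zero Fourier multiplier applied to the components of $\div v_2$ and $\curl v_2$, so the Calder\'on--Zygmund theorem (its kernel being trivial by Liouville, a harmonic field with $L^q$ gradient on $\r^3$ being constant) yields $\|\na v_2\|_{L^q(\r^3)}\le C(\|\div v_2\|_{L^q(\r^3)}+\|\curl v_2\|_{L^q(\r^3)})$ for all $q\in(1,\infty)$. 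On the other hand, $v_1$ is supported in $\bar B_{2R}$, the bounded domain $\Omega_{2R}:=B_{2R}\setminus\bar D$ is simply connected (its first Betti number vanishes since that of $D$ does) with $C^{1,1}$ boundary $\partial\Omega\cup\partial B_{2R}$, and $v_1\cdot n=0$ there ($v_1=v$ near $\partial\Omega$ and $v_1=0$ near $\partial B_{2R}$), so Lemma \ref{crle3} with $k=0$ gives $\|\na v_1\|_{L^q(\Omega_{2R})}\le C(\|\div v_1\|_{L^q}+\|\curl v_1\|_{L^q})$. Since the commutators $\na\chi\cdot v$ and $\na\chi\times v$ are supported in the annulus $\mathcal A:=B_{2R}\setminus B_R\subset\Omega$, adding the two estimates produces, for every $1<q<\infty$,
$$\|\na v\|_{L^q(\Omega)}\le C\big(\|\div v\|_{L^q(\Omega)}+\|\curl v\|_{L^q(\Omega)}+\|v\|_{L^q(\mathcal A)}\big).$$

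It then remains to handle $\|v\|_{L^q(\mathcal A)}$. For $3\le q\le6$ the exterior Sobolev embedding $D^{1,2}(\Omega)\hookrightarrow L^6(\Omega)$ and H\"older's inequality on the bounded set $\mathcal A$ give $\|v\|_{L^q(\mathcal A)}\le C\|v\|_{L^6(\Omega)}\le C\|\na v\|_{L^2(\Omega)}$, which is the second inequality of the lemma in this range. For $q>6$ one combines the Gagliardo--Nirenberg inequality \eqref{g2} (with exponents $6$ and some $r\in(3,q)$), the same Sobolev embedding, and interpolation of $\na v$ between $L^2(\Omega)$ and $L^q(\Omega)$ to obtain $\|v\|_{L^q(\mathcal A)}\le C\|\na v\|_{L^2(\Omega)}^{\theta}\|\na v\|_{L^q(\Omega)}^{1-\theta}$ with $\theta\in(0,1)$; since the two exponents sum to one, Young's inequality absorbs $\|\na v\|_{L^q(\Omega)}$ into the left-hand side of the displayed bound, again giving the linear estimate. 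For $1<q<3$ the term $\|v\|_{L^q(\mathcal A)}$ must be removed entirely, which one does by a compactness argument: were the homogeneous inequality false, there would be a sequence $v_k$ with $\|\na v_k\|_{L^q(\Omega)}=1$ and $\|\div v_k\|_{L^q}+\|\curl v_k\|_{L^q}\to0$; using $D^{1,q}(\Omega)\hookrightarrow L^{3q/(3-q)}(\Omega)$ and the compact local embedding one extracts a limit $v_\infty\in D^{1,q}(\Omega)$ that is $\div$-free, $\curl$-free, satisfies $v_\infty\cdot n=0$, and decays at infinity; writing $v_\infty=\na h$ with $h$ harmonic and invoking uniqueness for the exterior Neumann problem forces $v_\infty\equiv0$, and reinserting this into the displayed estimate contradicts $\|\na v_k\|_{L^q}=1$.

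The conceptual heart, and the main obstacle, is precisely the dichotomy $q<3$ versus $q\ge3$: for $q\ge3$ the space $D^{1,q}(\Omega)$ contains fields that do not decay at infinity (already the constants, and in fact an infinite-dimensional family of $\div$-free, $\curl$-free ones), so the $\curl$--$\div$ system has a nontrivial kernel and no fully homogeneous estimate can hold, with $\|\na v\|_{L^2(\Omega)}$ serving as the minimal corrective term. The two technically delicate points are the exterior-Neumann uniqueness used to discard the lower-order term when $q<3$ (one must check that the boundary integral over $\partial B_R$ vanishes as $R\to\infty$, which is exactly where genuine decay, hence the restriction $q<3$, enters), and, for $q>6$, the bootstrap above, since Sobolev embedding on $\mathcal A$ only reaches $L^6$ and one is forced to interpolate in $L^q$ on the unbounded domain.
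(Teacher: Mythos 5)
The paper never proves this lemma: it is imported wholesale from \cite[Theorem 3.2]{vww}, so there is no internal argument to compare with line by line, and what you give is a genuine self-contained alternative. Its skeleton — cut off near the obstacle, treat the far part by the whole-space Calder\'on--Zygmund/Liouville argument, treat the near part by the bounded-domain div-curl estimate of Lemma \ref{crle3} on $B_{2R}\setminus\bar D$, and collect the commutators into a lower-order term $\|v\|_{L^q(B_{2R}\setminus B_R)}$ — is exactly the scheme the authors themselves use to prove Lemma \ref{crle5}, so the reduction to $\|\na v\|_{L^q(\Omega)}\le C(\|\div v\|_{L^q(\Omega)}+\|\curl v\|_{L^q(\Omega)}+\|v\|_{L^q(B_{2R}\setminus B_R)})$ is sound. (Your one-line claim that $B_{2R}\setminus\bar D$ has vanishing first Betti number is true, but it rests on Alexander duality plus a Mayer--Vietoris argument and deserves at least a reference.)

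The genuine soft spot is that every device you use to remove the lower-order term silently assumes that $v$ vanishes at infinity, whereas the lemma, read literally with the paper's definition of $D^{1,q}$, imposes no decay. The inequalities $\|v\|_{L^6(\Omega)}\le C\|\na v\|_{L^2(\Omega)}$ and the embedding $D^{1,q}(\Omega)\hookrightarrow L^{3q/(3-q)}(\Omega)$ hold on an exterior domain only modulo additive constants; the paper's own \eqref{ljq04} and Lemma \ref{crle5} carry the explicit hypothesis $v\to 0$ as $|x|\to\infty$ for precisely this reason. This is not cosmetic: without some such hypothesis the case $1<q<3$ is false. Indeed, let $\phi$ be harmonic in $\Omega$ with $\pa\phi/\pa n=-c\cdot n$ on $\pa\Omega$, $\phi\to0$ at infinity, $c\ne 0$ (potential flow past the obstacle), and set $v=c+\na\phi$; then $\div v=\curl v=0$, $v\cdot n=0$ on $\pa\Omega$, and, since the monopole term of $\phi$ vanishes, $\na v=\na^2\phi=O(|x|^{-4})\in L^q(\Omega)$ for every $q>1$, while $\na v\not\equiv 0$ — contradicting \eqref{ljq01}. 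So the quoted theorem must carry (and your proof must state) an integrability or decay condition on $v$ itself; it is exactly this condition that legitimizes your normalization $\|v_k\|_{L^{3q/(3-q)}}\le C$ and the exterior-Neumann uniqueness step that kills the limit field in your compactness argument, and that rules out the potential flows which would otherwise lie in the kernel. With that hypothesis made explicit (it is satisfied by every field to which the paper applies the lemma), your argument is correct: the Liouville/Calder\'on--Zygmund step on $\r^3$, the absorption via Young's inequality for $3\le q<\infty$ (including the $q>6$ interpolation, whose exponent bookkeeping works and which legitimately uses $\|\na v\|_{L^q}<\infty$ to absorb), and the contradiction argument for $1<q<3$ (with Rellich on a slightly enlarged annulus giving $\|v_k\|_{L^q(B_{2R}\setminus B_R)}\to0$) all go through, at the price of a non-explicit constant in the subcritical case.
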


Due to \cite[Theorem 5.1]{lhm} (choosing $\alpha=0$), we obtain
\begin{lemma}   \la{crle2}
Let $\Omega$ be given in Lemma \ref{crle1}, for any $v\in W^{1,q}(\Omega)\,\,(1<q<+\infty)$ with $v\times n=0$ on $\partial\Omega$, it holds that
$$\|\nabla v\|_{L^q(\Omega)}\leq C(\|v\|_{L^q(\Omega)}+\|\div v\|_{L^q(\Omega)}+\|\curl v\|_{L^q(\Omega)}).$$
\end{lemma}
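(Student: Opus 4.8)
The final statement to prove is Lemma~\ref{crle2}, which is the exterior-domain $W^{1,q}$ estimate $\|\nabla v\|_{L^q(\Omega)}\leq C(\|v\|_{L^q(\Omega)}+\|\div v\|_{L^q(\Omega)}+\|\curl v\|_{L^q(\Omega)})$ for $v\in W^{1,q}(\Omega)$ with $v\times n=0$ on $\partial\Omega$. Since the text explicitly says this follows from \cite[Theorem 5.1]{lhm} by choosing $\alpha=0$, the "proof" is really just a citation, so my plan is to indicate how the cited weighted estimate specializes, rather than to reprove it from scratch.

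The plan is as follows. First I would recall that the result of \cite{lhm} is a family of weighted $L^q$ estimates for vector fields on exterior domains with vanishing tangential trace, indexed by a weight exponent $\alpha$ (typically a power of $\langle x\rangle = (1+|x|^2)^{1/2}$); taking $\alpha=0$ removes all weights and yields precisely the unweighted inequality stated here, with the lower-order term $\|v\|_{L^q}$ on the right-hand side accounting for the finite-dimensional space of "harmonic fields'' (div-free, curl-free, tangent to the boundary) that cannot be controlled by $\div v$ and $\curl v$ alone. Second, for completeness I would sketch the underlying mechanism: one extends $v$ suitably, uses the Hodge-type decomposition $-\Delta v = \nabla\div v - \curl\curl v$ together with the boundary condition $v\times n = 0$ to set up an elliptic system, and applies Calder\'on--Zygmund / Agmon--Douglis--Nirenberg estimates on the exterior domain; the tangential boundary condition is exactly the one compatible with the second-order elliptic operator, so one gets $\|\nabla^2(\text{potential})\|_{L^q}$ bounds that translate into the claimed $\|\nabla v\|_{L^q}$ bound modulo the $L^q$-norm of $v$ itself. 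Third, I would note that no restriction on the Betti numbers of $D$ is needed here precisely because the $\|v\|_{L^q}$ term absorbs the obstruction, in contrast to Lemmas~\ref{crle3}--\ref{crle1} where topological hypotheses let one drop that term.

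The main obstacle, if one wanted a self-contained proof rather than a citation, would be handling the behavior at infinity: on the unbounded domain $\Omega$ one must be careful that the Hodge decomposition and the elliptic regularity estimates hold with the correct decay, and that the auxiliary potentials lie in the right homogeneous Sobolev spaces $D^{1,q}(\Omega)$ — this is where the hypothesis $1<q<+\infty$ (as opposed to a restricted range) and the exterior-domain machinery of \cite{lhm} do the real work. Since all of that is already packaged in the cited theorem, I would simply invoke it: apply \cite[Theorem 5.1]{lhm} to $v$ with weight parameter $\alpha=0$ and with the tangential boundary condition $v\times n=0$, and read off the stated inequality directly. Thus the proof reduces to the single line: the assertion is the $\alpha=0$ case of \cite[Theorem 5.1]{lhm}, and we are done.

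\endproof
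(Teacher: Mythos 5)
Your proposal matches the paper exactly: the paper gives no independent proof of Lemma \ref{crle2} but simply invokes \cite[Theorem 5.1]{lhm} with the weight parameter $\alpha=0$, which is precisely your one-line citation. The additional heuristic sketch you include is reasonable but not part of the paper's argument, so nothing further is required.
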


Using Lemmas \ref{crle3}-\ref{crle2}, we further have the following result.
\begin{lemma}   \la{crle5}
Let $D$ be a simply connected domain in $\r^3$ with smooth boundary, and $\Omega$ is the exterior of $D$. For any  $p\in[2,6] $ and    integer $k\geq 0,$  there exists some positive constant $C$ depending only on $p$, $k$ and $D$ such that   every $v\in \{D^{k+1,p}(\Omega)\cap D^{1,2}(\Omega)| v(x,t)\rightarrow 0  \mbox{ as } |x|\rightarrow\infty  \}$  with $v\cdot n|_{\partial\Omega}=0$ or $v\times n|_{\partial\Omega}=0$   satisfies
\be\la{uwkq}\ba\|\nabla v\|_{W^{k,p}(\Omega)}\leq C(\|\div v\|_{W^{k,p}(\Omega)}+\|\curl v\|_{W^{k,p}(\Omega)}+\|\nabla v\|_{L^2(\Omega)}).\ea\ee
\end{lemma}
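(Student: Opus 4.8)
\textbf{Proof proposal for Lemma \ref{crle5}.}
The plan is to reduce the global estimate on the exterior domain $\Omega$ to two pieces: a bounded-domain estimate near $\partial\Omega$, handled by Lemmas \ref{crle3}--\ref{crle4}, and an estimate on the unbounded part, handled by Lemma \ref{crle1} (together with Lemma \ref{crle2} in the $v\times n=0$ case). Fix $R>0$ large enough that $\r^3\setminus\Omega=\bar D\subset B_R$, and choose a cut-off function $\chi\in C_0^\infty(B_{2R})$ with $\chi\equiv1$ on $B_R$ and $0\le\chi\le1$. Split $v=\chi v+(1-\chi)v$. The first piece $\chi v$ is supported in the bounded set $\Omega_{2R}\triangleq\Omega\cap B_{2R}$, a bounded domain with smooth boundary consisting of $\partial\Omega$ and the sphere $\partial B_{2R}$ (on which $\chi$ and all its derivatives vanish, so the relevant boundary trace condition is preserved there trivially). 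The second piece $(1-\chi)v$ vanishes on $\partial\Omega$ and, being supported away from the obstacle, can be regarded as an element of the whole-space (or large-exterior) theory.

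For the near-field piece, I would apply Lemma \ref{crle3} (if $v\cdot n|_{\partial\Omega}=0$) or Lemma \ref{crle4} (if $v\times n|_{\partial\Omega}=0$; note $\Omega_{2R}$ has no holes since $D$ is simply connected and we have added the ball, so the "no holes" version applies, or we simply keep the $\|v\|_{L^q}$ term and absorb it later). This bounds $\|\nabla(\chi v)\|_{W^{k,p}(\Omega_{2R})}$ by $\|\div(\chi v)\|_{W^{k,p}}+\|\curl(\chi v)\|_{W^{k,p}}$ (plus possibly $\|\chi v\|_{L^p}$). Expanding $\div(\chi v)=\chi\div v+\nabla\chi\cdot v$ and $\curl(\chi v)=\chi\curl v+\nabla\chi\times v$, the commutator terms involve only $v$ and its derivatives up to order $k$ on the annulus $B_{2R}\setminus B_R$, i.e. they are controlled by $\|v\|_{W^{k,p}(\Omega_{2R})}$, hence ultimately by interpolation and Sobolev embedding by $\|\nabla v\|_{L^2(\Omega)}$ and $\|\nabla v\|_{L^p}$ on that annulus; the latter is in turn dominated by the right-hand side via the local div-curl estimate applied on a slightly larger bounded piece, or handled by the standard bootstrap where such lower-order annulus terms are harmless.

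For the far-field piece $w\triangleq(1-\chi)v$, which lies in $D^{k+1,p}(\Omega)\cap D^{1,2}(\Omega)$, decays at infinity, vanishes near $\partial\Omega$, and satisfies $w\cdot n|_{\partial\Omega}=0$ trivially, I would invoke Lemma \ref{crle1}: for $p\in[2,6]$ (so $p\ge3$ is the relevant range, with $p<3$ covered by the first inequality of Lemma \ref{crle1} plus interpolation) we get $\|\nabla w\|_{L^p(\Omega)}\le C(\|\div w\|_{L^p}+\|\curl w\|_{L^p}+\|\nabla w\|_{L^2})$, and again $\div w,\curl w$ produce commutators supported on the annulus that are lower order. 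The higher-order bound $\|\nabla w\|_{W^{k,p}}$ for $k\ge1$ follows by differentiating the elliptic system: $w$ satisfies $\Delta w=\nabla\div w-\curl\curl w$, so interior elliptic regularity (the problem is on the whole space up to the annulus where $w$ is cut off) gives the $W^{k,p}$ bound in terms of $\div w$, $\curl w$ in $W^{k,p}$ and lower-order norms of $w$; one may also iterate Lemma \ref{crle1}/\ref{crle2} on derivatives. Combining the two pieces and absorbing the annulus/lower-order terms $\|v\|_{W^{k,p}(\Omega_{2R})}$ — which are bounded by $\|\nabla v\|_{L^2(\Omega)}$ plus an $\varepsilon$-fraction of $\|\nabla v\|_{W^{k,p}}$ via Gagliardo--Nirenberg interpolation on the bounded annulus — yields \eqref{uwkq}.

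The main obstacle I anticipate is the bookkeeping of the lower-order term $\|v\|_{L^p}$ (or $\|v\|_{W^{k,p}}$) that appears in the $v\times n=0$ version of Lemma \ref{crle4} and in Lemma \ref{crle2}: on the unbounded domain there is no Poincaré inequality, so one cannot simply bound $\|v\|_{L^p(\Omega)}$ by $\|\nabla v\|_{L^p(\Omega)}$. The resolution is that these $L^p$ norms only ever need to be taken over the \emph{bounded} region $\Omega_{2R}$ (the far-field piece uses Lemma \ref{crle1}, which has no such term in the range $1<q<3$, and only $\|\nabla w\|_{L^2}$ for $q\ge3$), where one does have, for functions with vanishing normal/tangential trace, control of $\|v\|_{L^p(\Omega_{2R})}$ by $\|\nabla v\|_{L^p(\Omega_{2R})}$ up to the $L^2$-gradient term via a compactness/Poincaré-type argument; alternatively one localizes the $L^2$ gradient bound and uses Sobolev embedding $D^{1,2}(\Omega)\hookrightarrow L^6(\Omega)$ to directly control $\|v\|_{L^p(\Omega)}$ for $p\le6$ by $\|\nabla v\|_{L^2(\Omega)}$, which is exactly the term allowed on the right-hand side of \eqref{uwkq}. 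This last observation — that $D^{1,2}(\Omega)\hookrightarrow L^6$ on a $3$D exterior domain makes $\|v\|_{L^p}$ for $p\in[2,6]$ a permissible quantity — is what makes the statement clean, and I would make it explicit early in the argument.
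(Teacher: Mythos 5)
Your proposal is correct and follows essentially the same route as the paper: a cut-off decomposition $v=\chi v+(1-\chi)v$, the bounded-domain div-curl lemmas (Lemmas \ref{crle3}--\ref{crle4}) for the near-field piece, a whole-space/far-field estimate for the piece vanishing near $\partial\Omega$, and the embedding $D^{1,2}(\Omega)\hookrightarrow L^6$ to control the annulus lower-order terms by $\|\nabla v\|_{L^2}$, followed by the same bootstrap in $k$. The only cosmetic difference is that the paper bounds the far-field piece directly by the standard $L^p$ estimate on $\r^3$ (since $(1-\chi)v$ extends by zero), whereas you invoke Lemma \ref{crle1} plus elliptic regularity, which amounts to the same thing.
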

\begin{proof}First, letting $B_R\triangleq\{x\in \r^3||x|<R\}$ be a ball whose center is at the origin such that $\bar{D}\subset B_R$, it follows from \eqref{g1} that there exists some $C$ depending only on $D$ such that every   $v\in \{  D^{1,2}(\Omega)| v(x,t)\rightarrow 0  \mbox{ as } |x|\rightarrow\infty  \}$ satisfies   for any $p\in [2,6]$ \be\la{ljq04}\ba
 \|v\|_{L^p(B_{2R}\cap\Omega)}
 \leq  C \|v\|_{L^6(\Omega)}  \leq   C\|\na v\|_{L^2(\Omega)},\ea\ee  which together with the standard Sobolev's inequality gives \be \la{ljq05}\ba \|v\|_{ L^4 (\partial \Omega)}&\le C\|v\|_{L^2(B_{2R}\cap\Omega)}+C\|\na v\|_{L^2(B_{2R}\cap\Omega)}\\ &\le C \|\na v\|_{L^2( \Omega)}.\ea\ee

 Then, it suffices to prove \eqref{uwkq} in the case $v\cdot n=0$ since the case $v\times n=0$ can be handled similarly, where we utilize Lemma \ref{crle4} instead of Lemma \ref{crle3}.
We introduce a cut-off function $\eta(x)\in C_c^\infty(B_{2R})$ satisfying $\eta(x)=1$ for $|x|\leq R,$ $\eta(x)=0$ for $|x|\geq 2R,$ $0<\eta(x)<1$ for $R<|x|< 2R,$ and $|\partial^\alpha\eta(x)|<C(R,\alpha)$ for any $0\leq|\alpha|\leq k+1$. Notice that $B_{2R}\cap\Omega$ is a simply connected domain and $\eta v\cdot \overrightarrow{n}=0$ on $\partial B_{2R}\cup \partial \Omega$, consequently,
\be\la{crl 8}\ba  \|\nabla (\eta v)\|_{W^{k,p}( \Omega)} &=\|\nabla (\eta v)\|_{W^{k,p}(B_{2R}\cap\Omega)} \\&\leq C(\|\div (\eta v)\|_{W^{k,p}(B_{2R}\cap\Omega)}+\|\curl (\eta v)\|_{W^{k,p}(B_{2R}\cap\Omega)})\\
&\leq C(\|\div v\|_{W^{k,p}(\Omega)}+\|\curl v\|_{W^{k,p}(\Omega)}+\|v\|_{W^{k,p}(B_{2R}\cap\Omega)}).\ea\ee
On the other hand, the standard $L^p$ estimate implies that
\bnn\la{crl 7}\ba \|\nabla ((1-\eta) v)\|_{W^{k,p}(\r^3)}&\leq C(\|\div ((1-\eta) v)\|_{W^{k,p}(\r^3)}+\|\curl ((1-\eta) v)\|_{W^{k,p}(\r^3)})\\
&\leq C(\|\div v\|_{W^{k,p}(\Omega)}+\|\curl v\|_{W^{k,p}(\Omega)}+\|v\|_{W^{k,p}(B_{2R}\cap\Omega)}),
\ea\enn which together with \eqref{crl 8} leads to
\be\la{crl 9}\ba \|\nabla v\|_{W^{k,p}(\Omega)}\leq C(\|\div v\|_{W^{k,p}(\Omega)}+\|\curl v\|_{W^{k,p}(\Omega)}+\|v\|_{W^{k,p}(B_{2R}\cap\Omega)}).
\ea\ee

Hence, for $k=0$ and $p\in[2,6]$, by Holder's and Gagliardo-Nirenberg's inequalities,
\be\la{crl 10}\ba \|\nabla v\|_{L^p(\Omega)}&\leq C(\|\div v\|_{L^p(\Omega)}+\|\curl v\|_{L^p(\Omega)}+\|v\|_{L^p(B_{2R}\cap\Omega)})\\
&\leq C(\|\div v\|_{L^p(\Omega)}+\|\curl v\|_{L^p(\Omega)}+\|\nabla v\|_{L^2(\Omega)}),
\ea\ee where in the second inequality we have used   \eqref{ljq04}.

 Finally,  choosing  $k=1$ in \eqref{crl 9} gives
\bnn\la{crl 11}\ba \|\nabla v\|_{W^{1,p}(\Omega)}&\leq C(\|\div v\|_{W^{1,p}(\Omega)}+\|\curl v\|_{W^{1,p}(\Omega)}+\|v\|_{W^{1,p}(B_{2R}\cap\Omega)})\\
&\leq C(\|\div v\|_{W^{1,p}(\Omega)}+\|\curl v\|_{W^{1,p}(\Omega)}+\|\nabla v\|_{L^p(\Omega)}+\|\nabla v\|_{L^2(\Omega)}) \\
&\leq C(\|\div v\|_{W^{1,p}(\Omega)}+\|\curl v\|_{W^{1,p}(\Omega)}+\|\nabla v\|_{L^2(\Omega)}) ,
\ea\enn
where in the last inequality we have utilized \eqref{crl 10}. Combining this, \eqref{crl 9}, Gagliardo-Nirenberg's  inequality, and an inductive derivation leads to \eqref{uwkq} and  finishes the proof of Lemma \ref{crle5}.
\end{proof}

Finally, we state the Beale-Kato-Majda type inequality   which was first proved in \cite{bkm,kato} when $\div u\equiv 0$ (see also \cite{hlx} for the case that $\div u\not\equiv 0$).
\begin{lemma}\la{le9}
For $3<q<\infty,$ assume that $u\cdot n=0$ and $\curl u\times n=0,\,\,\,\nabla u\in W^{1,q},$ there is a constant  $C=C(q)$ such that  the following estimate holds
\bnn\ba
\|\na u\|_{L^\infty}\le C\left(\|{\rm div}u\|_{L^\infty}+\|\curl u\|_{L^\infty} \right)\log(e+\|\na^2u\|_{L^q})+C\|\na u\|_{L^2} +C .
\ea\enn
\end{lemma}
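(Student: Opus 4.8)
The plan is to localize the estimate --- to a fixed bounded neighbourhood of $\partial\O$ and to a neighbourhood of infinity --- and to reduce each part to an already available Beale--Kato--Majda type inequality: the whole-space one (\cite{bkm,kato} when $\div u\equiv0$, \cite{hlx} in general) and the bounded-domain one with the slip condition \eqref{ch1} (as in \cite{CCL1,hlx}). Fix $R>0$ with $\ol D\subset B_R$, and let $\eta\in C_c^\infty(B_{2R})$ be the cut-off used in the proof of Lemma \ref{crle5}, so that $\eta\equiv1$ on $B_R\supset\partial\O$ and $\na\eta$ is supported in the fixed compact set $K\triangleq\{x\in\O:R\le|x|\le2R\}$, which lies at positive distance from $\partial\O$. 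Write $u=\eta u+(1-\eta)u$. Since $\eta\equiv1$ near $\partial\O$, the field $(1-\eta)u$ and all its derivatives vanish near $\partial\O$, so its extension by zero belongs to $W^{2,q}(\r^3)$; and $\eta u$ is supported in the bounded domain $B_{2R}\cap\O$, vanishes near $\partial B_{2R}$, and satisfies $\eta u\cdot n=0$ and $\curl(\eta u)\times n=\curl u\times n=0$ on $\partial\O$.

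I would then apply, to each piece $w\in\{(1-\eta)u,\,\eta u\}$, the inequality
\[\|\na w\|_{L^\infty}\le C(\|\div w\|_{L^\infty}+\|\curl w\|_{L^\infty})\log(e+\|\na^2w\|_{L^q})+C\|\na w\|_{L^2}+C,\]
which holds on $\r^3$ and on $B_{2R}\cap\O$ respectively. Its mechanism is the logarithmic splitting of the Calder\'on--Zygmund representation of $\na w$ in terms of $\div w$ and $\curl w$ --- for $w=(1-\eta)u$ via the whole-space kernel, and for $w=\eta u$ via the Green's matrix of the system ``recover a field from its divergence, its curl, and its normal trace'' on $B_{2R}\cap\O$ subject to the slip condition, so that the dangerous boundary contributions vanish: one cuts the singular integral into $\{|x-y|<\si\}$, a bounded middle region, and (in the exterior case) a region near infinity, bounding these respectively by $C\si^{1-3/q}\|\na^2w\|_{L^q}$ (using $W^{1,q}\hookrightarrow C^{0,1-3/q}$ since $q>3$), $C(\|\div w\|_{L^\infty}+\|\curl w\|_{L^\infty})\log(1/\si)$, and $C\|\na w\|_{L^2}$, and optimizing in $\si=(1+\|\na^2w\|_{L^q})^{-\frac{q}{q-3}}$. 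Adding the two estimates controls $\|\na u\|_{L^\infty}\le\|\na(\eta u)\|_{L^\infty}+\|\na((1-\eta)u)\|_{L^\infty}$ by a right-hand side of the same form, written through $\div w$, $\curl w$, $\na w$, $\na^2 w$.

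It remains to convert these back into the quantities in the statement. Expanding the Leibniz commutators --- e.g. $\div((1-\eta)u)=(1-\eta)\div u-\na\eta\cdot u$, $\curl(\eta u)=\eta\curl u+\na\eta\times u$ --- all extra terms are supported in $K$ and carry at most one derivative of $u$. On $K$, a fixed compact set disjoint from $\partial\O$, I would estimate them with Lemma \ref{l1}, the curl--div inequalities of Lemmas \ref{crle3}--\ref{crle5}, interior elliptic regularity, the embeddings \eqref{ljq04}, and interpolations such as $\|\div u\|_{L^q(\O)}\le C\|\div u\|_{L^\infty}^{1-2/q}\|\na u\|_{L^2}^{2/q}$ and $\|u\|_{L^\infty(K)}\le C\|u\|_{W^{1,q}(K')}$ (with $K\subset K'$ a compact subset of $\O$) combined with $\|u\|_{L^6(\O)}\le C\|\na u\|_{L^2}$. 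This bounds the commutator contributions to the logarithm's coefficient by $C(\|\div u\|_{L^\infty}+\|\curl u\|_{L^\infty})+C\|\na u\|_{L^2}$, those inside the logarithm by $C\|\na^2u\|_{L^q}+C\|\na u\|_{L^2}+C$, and the $L^2$-type remainder by $C\|\na u\|_{L^2}$. Substituting and absorbing the genuinely lower-order contributions (using also $\|\div u\|_{L^\infty}+\|\curl u\|_{L^\infty}\le C\|\na^2u\|_{L^q}+C\|\na u\|_{L^2}$) gives the claimed inequality.

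The hardest part will be precisely this last bookkeeping: ensuring that the cut-off commutators neither spoil the coefficient $\|\div u\|_{L^\infty}+\|\curl u\|_{L^\infty}$ in front of the logarithm nor introduce into its argument anything larger than $\|\na^2u\|_{L^q}$. This is feasible exactly because $\na\eta$ is supported at positive distance from $\partial\O$, where interior estimates and Lemmas \ref{crle3}--\ref{crle5} are available; the accompanying delicate point --- the control of the Green's matrix of the curl--divergence system on $B_{2R}\cap\O$ under the slip condition, needed for the bounded piece $\eta u$ --- is handled exactly as in \cite{CCL1,hlx}.
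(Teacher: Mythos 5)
Your localization strategy is reasonable in outline, but the step you yourself flag as ``the hardest bookkeeping'' is exactly where the argument, as written, fails. When you apply the BKM inequality as a black box to $\eta u$ (and to $(1-\eta)u$), the cut-off commutators enter the \emph{coefficient} of the logarithm, since $\div(\eta u)=\eta\,\div u+\na\eta\cdot u$ and $\curl(\eta u)=\eta\,\curl u+\na\eta\times u$. Your proposed bound for them, $\|u\|_{L^\infty(K)}\le C(\|\div u\|_{L^\infty}+\|\curl u\|_{L^\infty})+C\|\na u\|_{L^2}$, then produces the term $\|\na u\|_{L^2}\log\big(e+\|\na^2 u\|_{L^q}\big)$, which is \emph{not} dominated by the right-hand side of the lemma: there $\|\na u\|_{L^2}$ enters only linearly, and $\|\na u\|_{L^2}$ and $\|\na^2u\|_{L^q}$ are independent quantities. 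For instance, superpose a slowly varying field $w(x/\lambda)$ supported away from $\pa\O$ (so the boundary conditions hold trivially), whose $L^2$-gradient norm is of order $\lambda^{1/2}$ while its div/curl sup-norms tend to $0$, with a fixed-amplitude high-frequency bump making $\|\na^2u\|_{L^q}$ arbitrarily large; then $\|\na u\|_{L^2}\log(e+\|\na^2u\|_{L^q})$ exceeds any multiple of $(\|\div u\|_{L^\infty}+\|\curl u\|_{L^\infty})\log(e+\|\na^2u\|_{L^q})+\|\na u\|_{L^2}+1$. The same defect reappears \emph{inside} the logarithm: the commutator part of $\|\na^2(\eta u)\|_{L^q}$ is controlled only through quantities like $\|\na u\|_{L^q(K)}\le C(\|\div u\|_{L^\infty}+\|\curl u\|_{L^\infty}+\|\na u\|_{L^2})$, which generates $(\|\div u\|_{L^\infty}+\|\curl u\|_{L^\infty})\log(e+\|\na u\|_{L^2})$; this is not ``lower order'', and the inequality $\|\div u\|_{L^\infty}+\|\curl u\|_{L^\infty}\le C\|\na^2u\|_{L^q}+C\|\na u\|_{L^2}$ you invoke does not absorb it. So the final absorption step is a genuine gap, not a routine verification.

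The localization can be rescued, but only by keeping the commutator sources out of the logarithm: use linearity of the div--curl problem on $B_{2R}\cap\O$ (resp.\ $\r^3$) to split the recovered field into the part driven by $\eta\,\div u,\ \eta\,\curl u$, to which the logarithmic splitting is applied with coefficient $\le C(\|\div u\|_{L^\infty}+\|\curl u\|_{L^\infty})$, and the part driven by $\na\eta\cdot u,\ \na\eta\times u$, which contains no derivative of $u$, is supported in a fixed compact set at positive distance from $\pa\O$, and can be estimated in $L^\infty$ directly by $W^{1,p}$ elliptic estimates for one fixed $p\in(3,6]$, contributing only additively $C(\|\div u\|_{L^\infty}+\|\curl u\|_{L^\infty}+\|\na u\|_{L^2})$. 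Even then a factor $\log(e+\|\na u\|_{L^2})$ survives inside the logarithm and must be removed by a separate case analysis based on \eqref{g2}, namely $\|\div u\|_{L^\infty}+\|\curl u\|_{L^\infty}\le C\|\na u\|_{L^2}^{\al}\|\na^2u\|_{L^q}^{1-\al}$ with $\al\in(0,1)$: either $\log(e+\|\na u\|_{L^2})\le K\log(e+\|\na^2u\|_{L^q})$ and the term is harmless, or the interpolation shows the whole product is bounded by $C(\|\na u\|_{L^2}+1)$. None of this appears in your sketch. Note also that the paper does not argue by cut-off at all: it omits the proof and refers to \cite[Lemma 2.7]{CCL1}, i.e.\ the bounded-domain representation argument is run directly on the exterior domain using the div--curl estimates of Lemmas \ref{crle1} and \ref{crle5}, which avoids commutators entirely; moreover, your black-box use of a CCL1-type logarithmic inequality on $B_{2R}\cap\O$, a bounded domain whose boundary has two components, is itself an unproved variant of that lemma rather than a citation.
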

\begin{proof} The proof is similar to that of \cite[Lemma 2.7]{CCL1}, and we omit it.
\end{proof}
\subsection{Estimates for $F$, $\curl u$ and $\nabla u$}
Denote
\be \la{dt0}  \text{curl} u \triangleq \nabla\times u ,\quad F\triangleq(2\mu+\lambda)\div u-(P-P(\rho_\infty)),\ee
where $F$  is called the effective viscous flux and plays an important role in our analysis. Now we give some a priori estimates for $F$, $\curl u$ and $\nabla u$, which will be used frequently later.
\begin{lemma}   \la{le3}
 Assume $\Omega$ is an exterior domain of some simply connected bounded domain in $\r^3$ and its boundary $\partial\Omega$ is smooth. Let $(\rho,u)$ be a smooth solution of \eqref{a1} in $\Omega$ with slip condition. Then for any $p\in[2,6]$ and $ q\in (1,\infty),$ there exists a positive constant $C$ which may depend only on $p, q, \mu,\lambda$ and $\Omega$ such that
\be\la{h19}\ba
\|\nabla F\|_{L^q}\leq C\|\rho\dot{u}\|_{L^q},
\ea\ee
\be\la{zh19}\ba
\|\nabla\curl u\|_{L^p}\leq C(\|\rho\dot{u}\|_{L^p}+\|\rho\dot{u}\|_{L^2}+\|\nabla u\|_{L^2}),
\ea\ee
\be\la{h20}\ba
\|F\|_{L^p}&\leq C\|\rho\dot{u}\|_{L^2}^{(3p-6)/(2p)}(\|\nabla u\|_{L^2}+\|P-P(\rho_\infty)\|_{L^2})^{(6-p)/(2p)},
\ea\ee
\be\la{zh20}\ba
\|\curl u\|_{L^p}&\leq C\|\rho\dot{u}\|_{L^2}^{(3p-6)/(2p)}\|\nabla u\|_{L^2}^{(6-p)/(2p)}+C\|\nabla u\|_{L^2},
\ea\ee
\be\la{h18}\ba
\|\nabla u\|_{L^p}\leq& C\|\nabla u\|_{L^2}^{(6-p)/(2p)}(\|\rho\dot{u}\|_{L^2} +\|P-P(\rho_\infty)\|_{L^6})^{(3p-6)/(2p)}\\&+C\|\nabla u\|_{L^2}.
\ea\ee
\end{lemma}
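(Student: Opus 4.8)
\textbf{Proof proposal for Lemma \ref{le3}.}

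The plan is to exploit the classical decomposition of the momentum equation \eqref{a1}$_2$, which can be rewritten as
\be\la{plan1}
\rho\dot u = \mu\Delta u + (\mu+\lambda)\na\div u - \na P = \na F - \mu\curl\curl u,
\ee
using the identity $\Delta u = \na\div u - \curl\curl u$ and the definition \eqref{dt0} of $F$. Taking the divergence of \eqref{plan1} shows that $F$ solves the Neumann problem $\Delta F = \div(\rho\dot u)$ in $\Omega$, with boundary condition obtained by dotting \eqref{plan1} with $n$: since $u\cdot n=0$ and $\curl u\times n=0$ on $\partial\Omega$, the term $(\curl\curl u)\cdot n$ can be shown (as in \cite{CCL1}) to reduce to a lower-order curvature term that annihilates against the structure of the problem, giving $\pa F/\pa n = (\rho\dot u)\cdot n + \text{(controlled terms)}$ on $\partial\Omega$, and $\na F\to 0$ at infinity. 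Then \eqref{h19} follows immediately from Lemma \ref{zhle}(1) applied with $f$ a primitive of $\rho\dot u$ — more precisely one writes $F$ as the solution of \eqref{cxtj1} with right-hand side $\rho\dot u$ in $L^q$ and invokes the elliptic estimate $\|\na F\|_{L^q}\le C\|\rho\dot u\|_{L^q}$.

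For \eqref{zh19}, I would apply $\curl$ to \eqref{plan1}: since $\curl\na F=0$, we get $-\mu\curl\curl\curl u = \curl(\rho\dot u)$, i.e. $-\mu\Delta(\curl u) = \curl(\rho\dot u)$ (using $\div\curl u=0$). Now $\curl u$ satisfies $\curl u\times n=0$ on $\partial\Omega$, so I can apply Lemma \ref{crle5} with $v=\curl u$, $k=0$, $p\in[2,6]$: this bounds $\|\na\curl u\|_{L^p}$ by $\|\div\curl u\|_{L^p}+\|\curl\curl u\|_{L^p}+\|\na\curl u\|_{L^2}$. The first term vanishes; the second is controlled via the elliptic estimate for $-\mu\Delta(\curl u)=\curl(\rho\dot u)$ together with Lemma \ref{zhle}, yielding $\|\na^2(\curl u)\|\lesssim\|\rho\dot u\|$-type bounds, and interpolation gives the $\|\rho\dot u\|_{L^p}+\|\rho\dot u\|_{L^2}+\|\na u\|_{L^2}$ on the right. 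The $L^2$-norm of $\na\curl u$ itself is handled by the $p=2$ case: from the energy-type identity $\mu\int|\na\curl u|^2 \sim -\int\curl u\cdot\curl(\rho\dot u)\le \|\curl u\|_{L^2}\|\na(\rho\dot u)\|\dots$, or more cleanly by testing and using $\|\curl u\|_{L^6}\le C\|\na\curl u\|_{L^2}$ plus Young. Estimates \eqref{h20} and \eqref{zh20} then follow from Gagliardo-Nirenberg \eqref{g1}: write $\|F\|_{L^p}\le C\|F\|_{L^2}^{(6-p)/(2p)}\|\na F\|_{L^2}^{(3p-6)/(2p)}$, bound $\|F\|_{L^2}\le C(\|\na u\|_{L^2}+\|P-P(\rho_\infty)\|_{L^2})$ directly from \eqref{dt0}, and use \eqref{h19} with $q=2$ for $\|\na F\|_{L^2}$; similarly for $\curl u$ using \eqref{zh19} with $p=2$ (which gives $\|\na\curl u\|_{L^2}\le C(\|\rho\dot u\|_{L^2}+\|\na u\|_{L^2})$) and noting $\|\curl u\|_{L^2}\le\|\na u\|_{L^2}$.

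Finally, \eqref{h18} is assembled from the previous pieces: by Lemma \ref{crle5} with $v=u$, $k=0$ (valid since $u\cdot n=0$ on $\partial\Omega$ and $u\to0$ at infinity, $u\in D^{1,2}$),
\be\la{plan2}
\|\na u\|_{L^p}\le C(\|\div u\|_{L^p}+\|\curl u\|_{L^p}+\|\na u\|_{L^2}).
\ee
From \eqref{dt0}, $\|\div u\|_{L^p}\le C(\|F\|_{L^p}+\|P-P(\rho_\infty)\|_{L^p})$, and $\|P-P(\rho_\infty)\|_{L^p}\le C\|P-P(\rho_\infty)\|_{L^2}^{(6-p)/(2p)}\|P-P(\rho_\infty)\|_{L^6}^{(3p-6)/(2p)}$ by interpolation (absorbing $\|P-P(\rho_\infty)\|_{L^2}$, which is controlled, into the constant or keeping it as part of $\|\na u\|_{L^2}$-scale terms as in the statement). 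Plugging \eqref{h20} and \eqref{zh20} into \eqref{plan2} and collecting the $\|\na u\|_{L^2}$ terms yields the desired bound \eqref{h18}. The main obstacle I anticipate is the careful treatment of the boundary term $(\curl\curl u)\cdot n$ when setting up the Neumann problem for $F$ — ensuring that the slip conditions \eqref{ch1} indeed make the boundary flux equal to $(\rho\dot u)\cdot n$ up to terms that do not spoil the clean estimate \eqref{h19}; this is exactly the point where the geometry of $\partial\Omega$ enters, and where one must borrow the divergence-theorem computation and the identity $u = u^\perp\times n$ from \cite{CCL1}. A secondary technical point is justifying the application of Lemma \ref{crle5} to $\curl u$, which requires knowing $\curl u\in D^{1,2}$ with the correct decay at infinity — this should follow from the a priori regularity of smooth solutions but needs to be stated.
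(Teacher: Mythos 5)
Your treatment of \eqref{h19}, \eqref{h20} and \eqref{zh20} matches the paper's in substance, and the boundary difficulty you flag for \eqref{h19} is in fact vacuous: since $\curl u\times n=0$ on $\partial\Omega$, integrating $\mu\na\times\curl u=\na F-\rho\dot u$ against $\na\eta$, $\eta\in C_0^\infty(\r^3)$, kills the curl term with no leftover curvature contribution, so $F$ is exactly the weak solution of \eqref{cxtj1} with $f=\rho\dot u$ and Lemma \ref{zhle} applies verbatim. The genuine gap is in your route to \eqref{zh19}. Applying $\curl$ to the momentum equation replaces $\rho\dot u$ by $\curl(\rho\dot u)$, i.e.\ by a quantity with one more derivative, and Lemma \ref{zhle} does not cover the resulting boundary value problem: it is an estimate for the Neumann problem \eqref{cxtj1}, whereas $\curl u$ carries the tangential condition $\curl u\times n=0$; your fallback energy identity even exhibits $\|\na(\rho\dot u)\|_{L^2}$, which must not appear in \eqref{zh19}. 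The correct (and simpler) step, which is what the paper does, is to read off $\mu\,\na\times\curl u=\na F-\rho\dot u$ pointwise from the equation, so that $\|\na\times\curl u\|_{L^p}\le C\|\rho\dot u\|_{L^p}$ by \eqref{h19}, and then apply the div--curl estimate for fields with $v\times n=0$ (Lemma \ref{crle2}, or Lemma \ref{crle5}) to $v=\curl u$, using $\div\curl u=0$; the remaining lower-order term $\|\curl u\|_{L^p}$ (resp.\ $\|\na\curl u\|_{L^2}$) is then handled by Gagliardo--Nirenberg and the $q=2$ case, and no derivative ever falls on $\rho\dot u$.

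Your derivation of \eqref{h18} also does not deliver the stated inequality. Working at the $L^p$ level with Lemma \ref{crle5} and then inserting \eqref{h20} produces the term $\|\rho\dot u\|_{L^2}^{(3p-6)/(2p)}\|P-P(\rho_\infty)\|_{L^2}^{(6-p)/(2p)}$, and your interpolation of the pressure gives $\|P-P(\rho_\infty)\|_{L^2}^{(6-p)/(2p)}\|P-P(\rho_\infty)\|_{L^6}^{(3p-6)/(2p)}$; neither is dominated by the right-hand side of \eqref{h18}, where the exponent-$(6-p)/(2p)$ slot contains only $\|\na u\|_{L^2}$. ``Absorbing $\|P-P(\rho_\infty)\|_{L^2}$ into the constant'' is not permitted: the constant in the lemma depends only on $p,q,\mu,\lambda,\Omega$, and the precise structure of \eqref{h18} is what is exploited later. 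The paper's order of operations avoids this: first interpolate $\|\na u\|_{L^p}\le C\|\na u\|_{L^2}^{(6-p)/(2p)}\|\na u\|_{L^6}^{(3p-6)/(2p)}$, then estimate only the endpoint, $\|\na u\|_{L^6}\le C(\|\div u\|_{L^6}+\|\curl u\|_{L^6}+\|\na u\|_{L^2})\le C(\|\rho\dot u\|_{L^2}+\|P-P(\rho_\infty)\|_{L^6}+\|\na u\|_{L^2})$, using \eqref{h20} and \eqref{zh20} with $p=6$, where the offending $(6-p)/(2p)$ factors vanish; this yields exactly \eqref{h18}.
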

\begin{proof}
By $(\ref{a1})_2 $, it is easy to find that $F$ satisfies
$$\int\nabla F\cdot\nabla\eta dx=\int\rho\dot{u}\cdot\nabla\eta dx,\,\,\forall\eta\in C_0^{\infty}(\r^3).$$
Consequently, by Lemma \ref{zhle},
\be\la{x266}\ba
\|\nabla F\|_{L^q}\leq C\|\rho\dot{u}\|_{L^q},
\ea\ee
and for any integer $k\geq 1$,
\be\la{x2666}\ba
\|\nabla F\|_{W^{k,q}}\leq C\|\rho\dot{u}\|_{W^{k,q}}.
\ea\ee

On the other hand, $(\ref{a1})_2 $ can be rewritten as $\mu\nabla\times\curl u=\nabla F-\rho\dot{u}.$
Notice that $\curl u\times n=0$ on $\partial\Omega$ and $\div(\nabla\times\curl u)=0,$ we deduce from Lemmas \ref{crle2}-\ref{crle5} and \eqref{x266} that
\be\la{x267}\ba
\|\nabla\curl u\|_{L^q}&\leq C(\|\nabla\times\curl u\|_{L^q}+\|\curl u\|_{L^q})\\
&\leq C(\|\rho\dot{u}\|_{L^q}+\|\curl u\|_{L^q}).
\ea\ee
and for any integer $k\geq 1$,
\be\la{x268}\ba
\|\nabla\curl u\|_{W^{k,p}}&\leq C(\|\nabla\times\curl u\|_{W^{k,p}}+\|\nabla\curl u\|_{L^2})\\
&\leq C(\|\rho\dot{u}\|_{W^{k,p}}+\|\rho\dot{u}\|_{L^2}+\|\nabla u\|_{L^2}),
\ea\ee
Therefore, by Gagliardo-Nirenberg's inequality and \eqref{x267}, for $p\in[2,6]$,
\bnn\ba
\|\nabla\curl u\|_{L^p}&\le C(\|\rho\dot{u}\|_{L^p}+\|\curl u\|_{L^p} ) \\
&\le C(\|\rho\dot{u}\|_{L^p}+\|\nabla\curl u\|_{L^2}+\|\curl u\|_{L^2}) \\
&\le C(\|\rho\dot{u}\|_{L^p}+\|\rho\dot{u}\|_{L^2}+\|\curl u\|_{L^2}) \\
&\le C(\|\rho\dot{u}\|_{L^p}+\|\rho\dot{u}\|_{L^2}+\|\nabla u\|_{L^2}),
\ea\enn
which gives \eqref{zh19}.

Furthermore, it follows from  Gagliardo-Nirenberg's inequality, \eqref{g1} and \eqref{h19} that for $p\in[2,6]$,
\be\la{x2610}\ba
\|F\|_{L^p}&\leq C\|F\|_{L^2}^{(6-p)/2p}\|\nabla F\|_{L^2}^{(3p-6)/2p}\\
&\leq C\|\rho\dot{u}\|_{L^2}^{(3p-6)/2p}(\|\nabla u\|_{L^2}+\|P-P(\rho_\infty)\|_{L^2})^{(6-p)/2p}.
\ea\ee
Similarly,
\be\la{x2612}\ba
\|\curl u\|_{L^p}&\leq C\|\curl u\|_{L^2}^{(6-p)/(2p)}\|\nabla\curl u\|_{L^2}^{(3p-6)/(2p)}\\
&\leq C(\|\rho\dot{u}\|_{L^2}+\|\nabla u\|_{L^2})^{(3p-6)/(2p)}\|\nabla u\|_{L^2}^{(6-p)/(2p)}\\
&\leq C\|\rho\dot{u}\|_{L^2}^{(3p-6)/(2p)}\|\nabla u\|_{L^2}^{(6-p)/(2p)}+C\|\nabla u\|_{L^2}.
\ea\ee
A combination of \eqref{x2610} and \eqref{x2612} yields \eqref{h20}.

Finally, by virtue of Lemma \ref{crle1}, \eqref{g1}, \eqref{h19} and \eqref{zh20}, it indicates that
\bnn\ba
 \|\nabla u\|_{L^p}&\le C\|\nabla u\|_{L^2}^{(6-p)/(2p)}\|\nabla u\|_{L^6}^{(3p-6)/(2p)} \\
&\le C\|\nabla u\|_{L^2}^{(6-p)/(2p)}(\|\rho\dot{u}\|_{L^2}+\|P-P(\rho_\infty)\|_{L^6})^{(3p-6)/(2p)}+C\|\nabla u\|_{L^2},
\ea\enn where in the second inequality we have used
\bnn\ba
 \|\nabla u\|_{L^6}  &\le C (\|\div u\|_{L^6}+\|\curl u\|_{L^6}+\|\nabla u\|_{L^2}) \\
&\le C (\|F\|_{L^6}+\|\curl u\|_{L^6}+\|P-P(\rho_\infty)\|_{L^6}+\|\nabla u\|_{L^2})  \\
&\le C (\|\rho\dot{u}\|_{L^2}+\|P-P(\rho_\infty)\|_{L^6} + \|\nabla u\|_{L^2}),
\ea\enn due to both \eqref{x2610} and \eqref{x2612}  with $p=6.$
The proof of Lemma \ref{le3} is finished.
\end{proof}
\section{\la{se3} A priori estimates(I): lower order estimates}

In this section,   Assume $\Omega$ is always the exterior of a simply connected bounded domain in $\r^3$. Choosing a positive real number $R$  such that $\bar{D}\subset B_R$, one can extend the unit outer normal $n$ to $\Omega$ such that \be\la{ljq10} n\in C^3(\bar{\Omega}),\quad n\equiv 0  \mbox{ on }\r^3\setminus  B_{2R}.\ee

 Let $T>0$ be a fixed time and $(\n,u)$ be
the smooth solution to (\ref{a1})-(\ref{ch2})  on
$\Omega \times (0,T]$  with smooth initial
data $(\n_0,u_0)$ satisfying satisfying (\ref{dt1}) and (\ref{dt2}). We are going to establish some necessary a priori bounds for smooth solutions of the problem (\ref{a1})-(\ref{ch2}) which can extend the local classical solution guaranteed by Lemma \ref{loc1} to be a global one.

  In what follows, we will use the convention that $C$ may denote a generic positive constant depending on $\mu ,  \lambda ,   \ga ,  a ,  \on, \rho_\infty,   \Omega$ and $M$, and use $C(\delta)$ to emphasize that $C$ depends on $\delta.$  We have the following standard energy estimate for $(\rho,u)$ and preliminary $L^{2}$ bounds for $\nabla u$ and $\rho^{\frac{1}{2}}\dot{u}.$




\begin{lemma}\la{le2}
 Let $(\n,u)$ be a smooth solution of
 \eqref{a1}--\eqref{ch2} on $\O \times (0,T]. $
  Then there is a positive constant
  $C $ depending only  on $\mu,$  $\lambda,$ $\gamma$ and $\Omega$  such that
\be \la{a16} \sup_{0\le t\le T}
\left( \|\n^\frac{1}{2} u \|_{L^{2}}^{2}+\|G(\rho)\|_{L^{1}}\right) + \int_{0}^{T}\|\nabla u\|_{L^{2}}^{2}dt dt\le CC_0. \ee
\end{lemma}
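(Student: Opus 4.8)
The plan is to derive the standard energy identity by testing the momentum equation $(\ref{a1})_2$ with $u$ and combining it with the continuity equation $(\ref{a1})_1$. First I would rewrite the pressure term: since $G(\n)$ is constructed precisely so that $\n G'(\n) - G(\n) = (P(\n) - P(\n_\infty))/\n \cdot \n = P(\n)-P(\n_\infty)$, one checks using $(\ref{a1})_1$ that
\[
\frac{d}{dt}\int G(\n)\,dx = -\int (P(\n)-P(\n_\infty))\,\div u\,dx .
\]
Next, multiplying $(\ref{a1})_2$ by $u$, integrating over $\Omega$, and integrating by parts gives
\[
\frac12\frac{d}{dt}\int \n|u|^2\,dx + \mu\int|\nabla u|^2\,dx + (\mu+\lambda)\int(\div u)^2\,dx = \int P(\n)\,\div u\,dx + (\text{boundary terms}),
\]
where I have used the mass equation to convert $\int\big((\n u)_t + \div(\n u\otimes u)\big)\cdot u\,dx$ into $\frac12\frac{d}{dt}\int\n|u|^2\,dx$. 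Since $P(\n_\infty)$ is constant and $\int \div u = \int_{\partial\Omega} u\cdot n\,ds = 0$ by $(\ref{ch1})$, we may replace $P(\n)$ by $P(\n)-P(\n_\infty)$ on the right. Adding the two identities, the pressure-work terms cancel exactly, yielding
\[
\frac{d}{dt}\int\Big(\tfrac12\n|u|^2 + G(\n)\Big)dx + \mu\int|\nabla u|^2 + (\mu+\lambda)\int(\div u)^2 = (\text{boundary terms}).
\]

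The boundary terms come from the two viscous terms $-\mu\Delta u$ and $-(\mu+\lambda)\nabla\div u$. Here the slip condition $(\ref{ch1})$ does the work: using the identity $\Delta u = \nabla\div u - \nabla\times\curl u$, the boundary contributions reduce to integrals over $\partial\Omega$ involving $\curl u\times n$ and $u\cdot n$, both of which vanish by $(\ref{ch1})$. (One also needs that the contribution at spatial infinity vanishes, which follows from the far-field decay $(\ref{ch2})$ together with the regularity of the solution; this is where working with the homogeneous Sobolev framework matters.) Hence the boundary terms are zero, and after integrating in time and using $2\mu+3\lambda\ge 0$ together with $\mu>0$ to bound $\mu|\nabla u|^2+(\mu+\lambda)(\div u)^2 \ge c\mu|\nabla u|^2$ pointwise (or more simply, dropping the nonnegative $(\div u)^2$ term and keeping $\mu\int|\nabla u|^2$ when $\mu+\lambda\ge 0$, and otherwise regrouping), we obtain
\[
\sup_{0\le t\le T}\Big(\|\n^{1/2}u\|_{L^2}^2 + \|G(\n)\|_{L^1}\Big) + \int_0^T\|\nabla u\|_{L^2}^2\,dt \le C\,C_0,
\]
since the left side at $t=0$ is exactly $2C_0$ by $(\ref{c0})$.

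The main obstacle is a rigor point rather than a computational one: justifying the integration by parts on the unbounded domain $\Omega$, i.e. controlling the fluxes at $|x|\to\infty$ and verifying that all the integrals appearing (in particular $\int\n|u|^2$ and $\int G(\n)$) are finite and the manipulations legitimate for the smooth solutions under consideration. This is handled by the a priori regularity in Lemma \ref{loc1}, the assumption $\n_0\in L^{3/2}$ when $\n_\infty=0$, and the decay $(\ref{ch2})$; one typically argues on $B_R\cap\Omega$ and lets $R\to\infty$, using that $G(\n)\ge 0$ and that $G(\n)$ is comparable to $|\n-\n_\infty|^2$ near $\n_\infty$ and to $\n^\gamma$ for large $\n$, so that the finiteness of $C_0$ propagates. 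The cancellation of the pressure work and the vanishing of the boundary terms under $(\ref{ch1})$ are the structural facts that make the estimate clean.
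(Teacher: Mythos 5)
Your treatment of the potential energy term is fine (the identity $\rho G'(\rho)-G(\rho)=P(\rho)-P(\rho_\infty)$ and the resulting identity $\big(\int G(\rho)\,dx\big)_t+\int(P-P(\rho_\infty))\div u\,dx=0$ are exactly what the paper uses), but the kinetic-energy half of your argument has a genuine gap. Under the slip condition \eqref{ch1} you cannot have both the dissipation in the form $\mu\int|\nabla u|^2+(\mu+\lambda)\int(\div u)^2$ \emph{and} vanishing boundary terms. If you integrate $-\mu\Delta u\cdot u$ by parts directly, you get $\mu\int|\nabla u|^2$ plus the flux $-\mu\int_{\partial\Omega}(n\cdot\nabla u)\cdot u\,ds$, and this does not vanish under \eqref{ch1}: using $u\cdot n=0$ and $\curl u\times n=0$ it reduces to a curvature term of the type $\int_{\partial\Omega}u\cdot\nabla n\cdot u\,ds$, which is generically nonzero (this is precisely the kind of boundary term the paper must fight in Lemma \ref{xcrle1}). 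The boundary contributions vanish only if you use the decomposition $-\Delta u=\nabla\times\curl u-\nabla\div u$, as the paper does in \eqref{m1}; but then the dissipation you obtain is $(\lambda+2\mu)\int(\div u)^2+\mu\int|\curl u|^2$, not the one you wrote. So your displayed energy identity is internally inconsistent, and the subsequent pointwise manipulation with $2\mu+3\lambda\ge 0$ is moot.

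Because the correct dissipation is in div/curl form, the bound $\int_0^T\|\nabla u\|_{L^2}^2\,dt\le CC_0$ does not follow by merely dropping nonnegative terms: you still need to convert control of $\|\div u\|_{L^2}$ and $\|\curl u\|_{L^2}$ into control of $\|\nabla u\|_{L^2}$ for vector fields with $u\cdot n=0$ in the exterior domain. This is exactly inequality \eqref{ljq01} of Lemma \ref{crle1} (valid for $1<q<3$, hence for $q=2$), which the paper invokes at the end of its proof of Lemma \ref{le2}, and which is one of the difficulties the paper highlights for unbounded domains. Your proposal never invokes such a div-curl estimate, so the final step of the claimed bound is unjustified as written; once you replace your energy identity by the paper's \eqref{m8} and add the appeal to \eqref{ljq01}, the argument closes (the coefficients are fine since $\mu>0$ and $2\mu+3\lambda\ge 0$ give $\lambda+2\mu>0$). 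Your remarks about justifying the integration by parts on the unbounded domain are reasonable but secondary to this structural point.
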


\begin{proof}
Multiplying $(\ref{a1})_1 $ by $G'(\rho)$ and taking advantage of slip boundary condition \eqref{ch1} show that

\be\la{m0} \ba \left(\int G(\rho)dx\right)_t + \int(P-P(\rho_\infty))\div udx=0.\ea \ee

Note that $-\Delta u=-\nabla\div u+\nabla\times\curl u,$ we
rewrite $(\ref{a1})_2 $ as
\be\la{m1} \ba
\rho \dot{u} - (\lambda + 2\mu)\nabla\div u+\mu\nabla\times\curl u + \nabla(P - P(\rho_\infty))=0.
\ea \ee
Multiplying \eqref{m1} by $u$, along with \eqref{m0} and \eqref{ch1}, we have
\be\la{m8} \ba
\left(\frac12\int\rho |u|^{2}dx  + \int G(\rho)dx\right)_t + (\lambda + 2\mu)\int(\div u)^{2}dx + \mu\int|\curl u|^{2}dx=0,
\ea \ee
which together with \eqref{ljq01} gives \eqref{a16} and finishes the proof of Lemma \ref{le2}.
\end{proof}

For $\si(t)\triangleq\min\{1,t \},$  we define
 \be\la{As1}
  A_1(T) \triangleq \sup_{   0\le t\le T  }\left(\sigma\|\nabla u\|_{L^2}^2\right) + \int_0^{T} \sigma\int
 \n|\dot{u} |^2 dxdt,
  \ee
\be \la{As2}
  A_2(T)  \triangleq\sup_{  0\le t\le T   }\sigma^3\int\n|\dot{u}|^2dx + \int_0^{T}\int
  \sigma^3|\nabla\dot{u}|^2dxdt,
\ee
and
\be \la{As3}
  A_3(T)  \triangleq\sup_{  0\le t\le T   }\int\rho|u|^3dx .
\ee

The rest of section is devoted to proving the following proposition, which guarantees the existence of a global classical solution of \eqref{a1}--\eqref{ch2}.
\begin{proposition}\la{pr1}   Under  the conditions of Theorem \ref{th1},
   there exists a  positive constant  $\ve$
    depending    on  $\mu$, $\lambda$, $a$, $\ga$, $\on,$ $\rho_\infty$, $\Omega$ and $M$  such that if
       $(\rho,u)$  is a smooth solution of
       \eqref{a1}--\eqref{ch2}  on $\Omega\times (0,T] $
        satisfying
 \be\la{zz1}
 \sup\limits_{
 \Omega\times [0,T]}\rho\le 2\bar{\rho},\quad
     A_1(T) + A_2(T) \le 2C_0^{1/2},\quad A_3(\sigma(T))\leq 2C_0^{\frac{1}{4}},
  \ee
 then
        \be\la{zz2}
 \sup\limits_{\Omega\times [0,T]}\rho\le 7\bar{\rho}/4, \quad
     A_1(T) + A_2(T) \le  C_0^{1/2},\quad A_3(\sigma(T))\leq C_0^{\frac{1}{4}},
  \ee
   provided $C_0\le \ve.$
\end{proposition}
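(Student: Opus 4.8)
The plan is to close the bootstrap argument for Proposition \ref{pr1} by a continuation-type estimate: starting from the a priori hypotheses \eqref{zz1}, we must improve every bound by a factor before letting $C_0\to 0$. The overall scheme follows the Huang--Li--Xin framework adapted to the exterior domain, so the work is organized as a sequence of differential inequalities for the quantities $A_1(T)$, $A_2(T)$, $A_3(\sigma(T))$ and $\sup\rho$, each of whose right-hand side is controlled by $C_0$ together with the already-assumed (crude) bounds, in such a way that smallness of $C_0$ forces the sharp constants.

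First I would establish the key lower-order estimate for $A_1(T)$: multiply the momentum equation \eqref{m1} by $\sigma\dot u$ and integrate. Using $-\Delta u=-\nabla\div u+\nabla\times\curl u$, the slip condition \eqref{ch1}, and the identity $u=u^{\perp}\times n$ on $\partial\Omega$, one rewrites the pressure and viscous terms; the crucial difficulty — and what the authors flag as new — is the boundary integral of the form $\int_{\partial\Omega}\sigma^{m}(\nabla F\cdot u)(\dot u\cdot n)\,ds$ appearing in the higher-order analog (cf.\ \eqref{cax31}). Here I would exploit the cut-off function $\eta$ supported in $B_{2R}$ together with the extended normal \eqref{ljq10}, turning the surface integral into a volume integral over $B_{2R}\cap\Omega$ where the Gagliardo--Nirenberg machinery of Lemma \ref{crle5} and the flux estimates \eqref{h19}--\eqref{h18} of Lemma \ref{le3} apply. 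Combined with Lemma \ref{le2}, this yields $A_1(T)\le C C_0+ C C_0^{\alpha}A_1(T)+\cdots$, and absorbing gives $A_1(T)\le C C_0^{1/2}$ — in fact $\le \tfrac12 C_0^{1/2}$ once $C_0$ is small, which is the improvement we want.

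Next, for $A_2(T)$ I would operate on the momentum equation with the material derivative: apply $\dot{(\ )}\triangleq\partial_t+u\cdot\nabla$ to \eqref{m1}, multiply by $\sigma^3\dot u$, and integrate. This produces commutator terms $\nabla u\cdot\nabla u$, $\mathrm{div}\,u\,\dot u$, and again a boundary term that the same cut-off trick handles; the estimate \eqref{zh19} for $\|\nabla\curl u\|_{L^p}$ and the elliptic bounds for $\nabla F$ feed the right-hand side. Here the $A_3(\sigma(T))$ quantity enters through terms like $\int\rho|u||\dot u||\nabla\dot u|$ controlled via $\|\rho^{1/3}u\|_{L^3}$, so one closes the system by also estimating $A_3$: multiply $(\rho u)_t+\mathrm{div}(\rho u\otimes u)=\cdots$ by $3|u|u$ and integrate, using \eqref{ljq04}--\eqref{ljq05} for the boundary contributions on $[0,\sigma(T)]$. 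Finally, the density bound $\sup\rho\le 7\bar\rho/4$ comes from the Zlotnik inequality (Lemma \ref{le1}) applied along particle trajectories to $\tfrac{d}{dt}\rho = -\rho\,\mathrm{div}\,u$, writing $\mathrm{div}\,u=(F+(P-P(\rho_\infty)))/(2\mu+\lambda)$ and bounding $\int_0^t\|F\|_{L^\infty}$ by $C C_0^{1/2}$ via \eqref{h20} and Lemma \ref{le3}.

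I expect the main obstacle to be precisely the boundary integral $\int_{\partial\Omega}\sigma^{m}(\nabla F\cdot u)(\dot u\cdot n)\,ds$ and its counterparts arising in the $A_1$ and $A_2$ estimates: unlike the bounded-domain case of \cite{CCL1}, one cannot simply integrate by parts on $\partial\Omega$ and stay inside a compact set, so the reduction to $B_{2R}\cap\Omega$ via the cut-off $\eta$, followed by careful use of the norm equivalence $\|\nabla v\|_{L^2}\simeq\|D(v)\|_{L^2}$ and the div-curl estimates of Lemmas \ref{crle3}--\ref{crle5}, is the technical heart. A secondary difficulty is ensuring that every occurrence of $\|\nabla u\|_{L^p}$ is genuinely controlled by $\div u$ and $\curl u$ in the unbounded domain — this is exactly what Lemma \ref{crle5} is designed for, and its $L^2$-gradient correction term must be tracked consistently so that it too is absorbed into $C C_0$ rather than left free.
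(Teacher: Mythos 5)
Your plan for $A_1$, $A_2$, $A_3$ is essentially the paper's own route: Hoff-type multiplications by $\sigma^m\dot u$ (and the operator $\sigma^m\dot u^j[\partial_t+\div(u\,\cdot)]$ for $A_2$), the multiplier $3|u|u$ for $A_3$, and the conversion of the boundary term in \eqref{cax31} into a volume integral using $\dot u\cdot n=-u\cdot\na n\cdot u$, $u=u^{\perp}\times n$ and the divergence theorem; note that what localizes that volume integral to $B_{2R}$ is the compactly supported $C^3$ extension of $n$ in \eqref{ljq10}, while the cut-off $\eta$ only enters the proof of Lemma \ref{crle5}. Two ingredients you gloss over are where the paper actually closes the argument: the initial-layer estimates of Lemma \ref{zc1} (obtained by testing with $u_t$, with constants depending on $M$) and the pressure estimate $\int_0^T\sigma^3\|P-P(\rho_\infty)\|_{L^4}^4\,dt\le CC_0$ of Lemma \ref{xle5}; these are what control $\int_0^T\sigma^3\|\na u\|_{L^4}^4\,dt$ and $\int_0^{\sigma(T)}\sigma\|\na u\|_{L^3}^3\,dt$ and produce the bound $C(\on,M)C_0^{5/8}\le C_0^{1/2}$ (a smallness closure under \eqref{zz1}, not an absorption of a $C_0^{\alpha}A_1(T)$ term into the left-hand side). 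These are level-of-detail issues in a sketch, not errors of approach.

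The genuine gap is in the density bound. You propose to feed Zlotnik's lemma with the estimate $\int_0^t\|F\|_{L^\infty}\,d\tau\le CC_0^{1/2}$, but this is not available uniformly in $T$ from \eqref{zz1}: one only controls $\sup_t\sigma^3\|\rho^{1/2}\dot u\|_{L^2}^2$ and $\int\sigma^3\|\na\dot u\|_{L^2}^2\,dt$, so $\|F\|_{L^\infty}\lesssim\|\rho\dot u\|_{L^2}^{1/2}\|\na\dot u\|_{L^2}^{1/2}$ lies only in $L^4_t$ on $[\sigma(T),T]$, and its $L^1_t$ norm grows like $(T-1)^{3/4}$ by H\"older; moreover, if such a uniform $L^1_t$ bound held, the $N_1$-mechanism of Lemma \ref{le1} would be superfluous. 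The paper's proof (Lemma \ref{le7}) necessarily splits the time interval: on $[0,\sigma(T)]$ it bounds $\int\|\rho F\|_{L^\infty}\,dt\le C(\on,M)C_0^{3/32}$ using the time-weighted estimates of Lemma \ref{zc1}, taking $N_1=0$ and $\bar\zeta=\rho_\infty$; on $[\sigma(T),T]$ it uses Young's inequality in the form $\|F\|_{L^\infty}\le \frac{a}{2\mu+\lambda}+C\|F\|_{L^\infty}^4$ together with $\int_{\sigma(T)}^T\|F\|_{L^\infty}^4\,dt\le CC_0$, so that $b$ satisfies \eqref{a100} with $N_1=\frac{a}{2\mu+\lambda}$ and $N_0=CC_0$, and the linearly growing part is compensated by the structural fact $g(\zeta)=-\frac{a\zeta}{2\mu+\lambda}(\zeta^{\ga}-\rho_\infty^{\ga})\le-\frac{a}{2\mu+\lambda}$ for $\zeta\ge\rho_\infty+1$ (this is where $\on\ge\rho_\infty+1$ is used). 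Without this two-regime argument the improvement $\sup\rho\le 7\on/4$ does not follow from your sketch.
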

\begin{proof}Proposition \ref{pr1} is a direct consequence of Lemmas \ref{nzc1}, \ref{le5} and \ref{le7} below.
\end{proof}
The following Lemmas \ref{xcrle1}--\ref{le7} will be proven under the assumption \eqref{zz1}.
\begin{lemma}\la{xcrle1}
 Suppose $(\n,u)$ is a smooth solution of
 \eqref{a1}-\eqref{ch2} on $\O \times (0,T]$ satisfying \eqref{zz1}. Then there exists a positive constant
  $C $ depending only  on $\mu$, $\lambda$, $a$, $\ga$, $\on,$ $\rho_\infty$ and $\Omega$  such that
  \be\la{h14}
  A_1(T) \le  C C_0 + C\int_0^{T}\int\sigma|\nabla u|^3dx dt,
  \ee
 and
  \be\la{h15}
    A_2(T)
    \le   C C_0 + CA_1(T)  + C\int_0^{T}\int \sigma^3 |\nabla u|^4dx dt.
   \ee
\end{lemma}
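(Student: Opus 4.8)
The plan is to run the standard "basic energy estimate on the material derivative" argument, following the scheme of Hoff and of Huang-Li-Xin, but being careful with the boundary terms coming from the slip condition. First I would establish \eqref{h14}: multiply the momentum equation in the form \eqref{m1} by $\sigma\dot u$ and integrate over $\Omega$. The term $\int\sigma\rho\dot u\cdot\dot u\,dx$ is the good term $\sigma\|\rho^{1/2}\dot u\|_{L^2}^2$. The pressure term $-\int\sigma\nabla(P-P(\rho_\infty))\cdot\dot u\,dx$ and the two viscosity terms $(\lambda+2\mu)\int\sigma\nabla\div u\cdot\dot u\,dx$ and $-\mu\int\sigma\nabla\times\curl u\cdot\dot u\,dx$ are integrated by parts; writing $\dot u=u_t+u\cdot\nabla u$, the leading contribution produces $\frac{d}{dt}$ of $\sigma\big((\lambda+2\mu)\|\div u\|_{L^2}^2+\mu\|\curl u\|_{L^2}^2\big)$ (up to the $\sigma'$ term, which is controlled by $\int_0^T\|\nabla u\|_{L^2}^2\,dt\le CC_0$ from Lemma \ref{le2}), while the commutator terms involving $u\cdot\nabla u$ are the familiar cubic expressions $\int\sigma|\nabla u|^2|\div u|\,dx$, etc., all bounded by $C\int_0^T\sigma\int|\nabla u|^3\,dx\,dt$ plus absorbable pieces. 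The pressure contribution, after integration by parts and use of $(\ref{a1})_1$, yields terms like $\int\sigma P\,\div u_t\,dx=\frac{d}{dt}\int\sigma P\,\div u\,dx-\int\sigma'P\,\div u-\int\sigma P_t\,\div u$, and $P_t=-u\cdot\nabla P-\gamma P\,\div u$ combined with $\|P-P(\rho_\infty)\|_{L^\infty}\le C$ (from \eqref{zz1}) and Lemma \ref{le2} keeps everything at the level of $CC_0$ plus the cubic term. Integrating in $t$ over $(0,T)$ and using $\|\nabla u_0\|_{L^2}\le M$, $C_0\le\ve$ gives \eqref{h14}.

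For \eqref{h15} the idea is the same one level up: apply the operator $\partial_t+\div(u\cdot)$ to the momentum equation \eqref{m1}, multiply by $\sigma^3\dot u$, and integrate. This is exactly the computation in \cite{CCL1}/\cite{hlx}: the top-order good term is $\int\sigma^3|\nabla\dot u|^2\,dx$ (after using Lemma \ref{crle5} to pass from $\|\nabla\dot u\|_{L^2}$ to $\|\div\dot u\|_{L^2}+\|\curl\dot u\|_{L^2}+\|\nabla\dot u\|_{L^2}$, noting $\dot u$ need not satisfy the slip condition, which forces the extra $\|\nabla\dot u\|_{L^2}$ and hence the $A_1(T)$ on the right), the time-derivative term is $\frac{d}{dt}\int\sigma^3\rho|\dot u|^2\,dx$ with $\sigma'$ controlled by $A_1(T)$, and the remaining terms are quartic in $\nabla u$ (plus pressure terms and lower-order products bounded via \eqref{zz1}, Gagliardo-Nirenberg and Cauchy-Schwarz), giving the $C\int_0^T\sigma^3\int|\nabla u|^4\,dx\,dt$ on the right.

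The main obstacle — and the point where this lemma genuinely differs from the bounded-domain and interior cases — is the boundary integrals. When we integrate $-\mu\int\sigma\nabla\times\curl u\cdot\dot u\,dx$ by parts we pick up $\mu\int_{\partial\Omega}\sigma(\curl u\times n)\cdot\dot u\,ds$, which vanishes by \eqref{ch1}, but after commuting with $u\cdot\nabla$ we get terms like $\int_{\partial\Omega}\sigma(\nabla F\cdot u)(\dot u\cdot n)\,ds$ advertised in the introduction (see \eqref{cax31}); similarly in the $A_2$ estimate one meets $\int_{\partial\Omega}\sigma^3(\nabla F\cdot u)(\dot u\cdot n)\,ds$-type terms. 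Here one uses $u\cdot n=0$ on $\partial\Omega$ — so $\dot u\cdot n=u_t\cdot n+(u\cdot\nabla u)\cdot n=-u\cdot\nabla n\cdot u$ on the boundary, trading the normal derivative of $\dot u$ for $|\nabla n||u|^2$ — together with the cut-off function $n$ supported in $B_{2R}$ (so these integrals only see $B_{2R}\cap\Omega$) and the trace/Sobolev estimates \eqref{ljq04}-\eqref{ljq05} from Lemma \ref{crle5}. Combined with the divergence-theorem identity and the representation $u=u^\perp\times n$ (cf. \eqref{bdd2}), this reduces each boundary term to a volume integral over $B_{2R}\cap\Omega$ of the form $\int\sigma|\nabla F||u|\,|\nabla n|\,|u|\,dx$ or similar, which is then absorbed using \eqref{h19}, the bound $\|u\|_{L^6(\Omega)}\le C\|\nabla u\|_{L^2}$, and Young's inequality into $\varepsilon\sigma\|\rho^{1/2}\dot u\|_{L^2}^2$ plus $CC_0$ plus the cubic/quartic terms already present. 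Everything else is bookkeeping of the type carried out in \cite{CCL1}.
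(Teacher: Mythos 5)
Your plan coincides essentially verbatim with the paper's own proof: multiply the momentum equation by $\sigma^{m}\dot u$ (and, for $A_2$, apply $\sigma^{m}\dot u^{j}[\partial_t+\div(u\,\cdot)]$ to \eqref{xdy1}), control the slip-boundary terms via $\dot u\cdot n=-u\cdot\nabla n\cdot u$, the representation $u=u^{\perp}\times n$, the divergence theorem and the flux bound \eqref{h19}, and close with the corrected estimate \eqref{tb11} for $\|\nabla\dot u\|_{L^2}$ since $\dot u$ itself does not satisfy the slip condition. The only inaccuracies are cosmetic — the boundary term $\int_{\partial\Omega}\sigma^{m}(\nabla F\cdot u)(\dot u\cdot n)\,ds$ arises in the $A_2$ step rather than the $A_1$ step, and the extra terms produced by \eqref{tb11} are $\|\nabla u\|_{L^2}^2+\|\nabla u\|_{L^4}^2$ (not $\|\nabla\dot u\|_{L^2}$) — so the proposal is correct and takes the same route as the paper.
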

\begin{proof}
The proof is motivated by  \cite{hlx1,CCL1,H3}.
 First, the following two identities will be used more than once:
 \be\la{divug}
   \div (u\cdot \nabla u)=u\cdot \nabla (\div u)+\nabla u :\nabla u,
   \ee
 \be\la{curlug}
   \curl (u\cdot \nabla u)=u\cdot \nabla (\curl u)+\nabla u^i \times\nabla_i u.
   \ee

 Next, for $m\ge 0,$ multiplying $(\ref{a1})_2 $ by
$\sigma^m \dot{u}$ yields
\be\la{I0} \ba  \int \sigma^m \rho|\dot{u}|^2dx &= -\int\sigma^m \dot{u}\cdot\nabla Pdx + (\lambda+2\mu)\int\sigma^m \nabla\div u\cdot\dot{u}dx \\
&\quad - \mu\int\sigma^m \nabla\times\curl u\cdot\dot{u}dx \\
& \triangleq \sum_{i=1}^{3}I_i. \ea \ee
We will estimate  the three terms in the last equality. It follows  from the slip boundary condition $u\cdot n|_{\partial\Omega }=0$   that \be\la{bdd1}u\cdot\nabla u\cdot n=-u\cdot\nabla n\cdot u  \, \mbox{ on }\partial\Omega.\ee
For $I_1$, we check that
\be\la{I10} \ba
I_1 = & - \int \sigma^m \dot{u}\cdot\nabla Pdx \\
= & -\int\sigma^m u_{t}\cdot\nabla(P-P(\rho_\infty))dx
- \int\sigma^m u\cdot\nabla u\cdot\nabla Pdx \\
= & \left(\int\sigma^m(P-P(\rho_\infty))\div u\, dx\right)_{t} - m\sigma^{m-1}\sigma'\int(P-P(\rho_\infty))\div u\,dx  \\
&+ \int\sigma^{m}P\nabla u:\nabla u dx+ (\gamma-1)\int\sigma^{m}P(\div u)^{2}dx + \int_{\partial\Omega}\sigma^{m}Pu\cdot\nabla n\cdot u ds, \ea \ee
where we have used \eqref{bdd1}, \eqref{divug} and the relation
 $$P_t+\div(Pu)+(\gamma-1)P\div u=0.$$

For the boundary term in the last  of \eqref{I10}, one has by \eqref{ljq05}
\bnn \ba
 \int_{\partial\Omega}\sigma^{m}Pu\cdot\nabla n\cdot uds
&\le C(\bar{\rho})\sigma^{m} \|u\|_{L^2(\partial\Omega)}^2   \\
&\le C(\bar{\rho})\sigma^{m}\|\nabla u\|_{L^{2}}^{2}.
\ea  \enn

As a result, together with \eqref{a16},
\be\la{m20} \ba
I_1\leq &\left(\int\sigma^m(P-P(\rho_\infty))\div u\, dx\right)_{t} + C(\bar{\rho})\|\nabla u\|_{L^{2}}^{2} + C(\bar{\rho})m\sigma^{m-1}\sigma'C_{0}.
\ea \ee
Similarly, by \eqref{divug},
\be\la{I2} \ba
I_2 & =  (\lambda+2\mu)\int\sigma^m \nabla\div u\cdot\dot{u}dx \\
& = (\lambda+2\mu)\int_{\partial\Omega}\sigma^m\div u\,(\dot{u}\cdot n)ds - (\lambda+2\mu)\int\sigma^m\div u\,\div \dot{u}dx  \\
& = (\lambda+2\mu)\int_{\partial\Omega}\sigma^m\div u\,(u\cdot\nabla u\cdot n)ds - \frac{\lambda+2\mu}{2}\left(\int\sigma^{m}(\div u)^{2}dx\right)_{t} \\
&\quad - (\lambda+2\mu)\int\sigma^m\div u\,\div(u\cdot\nabla u)dx + \frac{m(\lambda+2\mu)}{2}\sigma^{m-1}\sigma'\int(\div u)^{2}dx \\
& = (\lambda+2\mu)\int_{\partial\Omega}\sigma^m\div u\,(u\cdot\nabla u\cdot n)ds - \frac{\lambda+2\mu}{2}\left(\int\sigma^{m}(\div u)^{2}dx\right)_{t} \\
&\quad +\frac{\lambda+2\mu}{2}\int\sigma^{m}(\div u)^{3}dx- (\lambda+2\mu)\int\sigma^m\div u\,\nabla u:\nabla udx  \\
&\quad + \frac{m(\lambda+2\mu)}{2}\sigma^{m-1}\sigma'\int(\div u)^{2}dx.
\ea\ee

On the other hand, it follows from H\"{o}lder's inequality, \eqref{ljq05}, and \eqref{h19} that
\bnn \ba
&\left|\int_{\partial\Omega}\div u\,(u\cdot\nabla u\cdot n)ds\right| \\
&=\left|\int_{\partial\Omega}\div u\,(u\cdot\nabla n\cdot u)ds\right|\\
& \leq\frac{1}{\lambda +2\mu}\left|\int_{\partial\Omega} Fu\cdot\nabla n\cdot uds\right|+\frac{1}{\lambda +2\mu}\left|\int_{\partial\Omega}(P-P(\rho_\infty))u\cdot\nabla n\cdot uds\right|\\
& \leq C\left(\|F\|_{L^{4}( \partial\Omega)}\|u\|_{L^{4}( \partial\Omega)}^{2}+ \|u\|_{L^{2}( \partial\Omega)}^{2}\right) \\
& \leq C\left(\|\nabla F\|_{L^{2}}\|\nabla u\|_{L^{2}}^2+\|\nabla u\|_{L^{2}}^2\right)\\
& \leq\frac{1}{2}\|\rho^{\frac{1}{2}}\dot{u}\|_{L^{2}}^{2}+C(\|\nabla u\|_{L^{2}}^{2}+\|\nabla u\|_{L^{2}}^{4}).
\ea  \enn
Hence,
\be\la{I21} \ba
I_2 & \leq - \frac{\lambda+2\mu}{2}\left(\int\sigma^{m}(\div u)^{2}dx\right)_t+Cm\sigma^{m-1}\sigma'\|\nabla u\|_{L^{2}}^{2}+C\int\sigma^{m}|\nabla u|^{3}dx\\
&\quad +\frac{1}{2}\sigma^{m}\|\rho^{\frac{1}{2}}\dot{u}\|_{L^{2}}^{2}+C\sigma^{m}\|\nabla u\|_{L^{2}}^{4}+C\|\nabla u\|_{L^{2}}^{2}.\ea\ee
For $I_3$, a straightforward computation along with \eqref{curlug} gives that
\be\la{I3}\ba
I_3 & = -\mu\int\sigma^{m}\nabla\times\curl u\cdot\dot{u}dx \\
& = -\mu\int\sigma^{m}\div(\curl u\times\dot{u})dx - \mu\int\sigma^{m}\curl u\cdot\curl\dot{u}dx \\
& = -\frac{\mu}{2}\left(\int\sigma^{m}|\curl u|^{2}dx\right)_t + \frac{\mu m}{2}\sigma^{m-1}\sigma'\int|\curl u|^{2}dx \\
& \quad - \mu\int\sigma^{m}\curl u\cdot\curl(u\cdot\nabla u)dx \\
& = -\frac{\mu}{2}\left(\int\sigma^{m}|\curl u|^{2}dx\right)_t + \frac{\mu m}{2}\sigma^{m-1}\sigma'\int|\curl u|^{2}dx   \\
& \quad - \mu\int\sigma^{m}(\nabla u^{i}\times\nabla_i u)\cdot\curl udx-\frac{\mu}{2}\int_{\partial\Omega}\sigma^{m}|\curl u|^{2}u\cdot nds  \\
& \quad + \frac{\mu}{2}\int\sigma^{m}|\curl u|^{2}\div udx\\
& \leq -\frac{\mu}{2}\left(\int\sigma^{m}|\curl u|^{2}dx\right)_t + C\sigma^{m-1}\sigma'\|\nabla u\|_{L^{2}}^{2} + C\sigma^{m}\|\nabla u\|_{L^{3}}^{3}.
\ea \ee
Together with the estimates of $I_1$, $I_2$ and $I_3$ above, it follows from $(\ref{I0})$ that
\be\la{I4}\ba
&\left((\lambda+2\mu)\int\sigma^{m}(\div u)^{2}dx+\mu\int\sigma^{m}|\curl u|^{2}dx\right)_{t}\\
&\quad-2\left(\int\sigma^{m}(P-P(\rho_\infty))\div udx\right)_{t}+\int\sigma^{m}\rho|\dot{u}|^{2}dx \\
& \leq C(\bar{\rho})m\sigma^{m-1}\sigma'C_{0}+C\sigma^{m}\|\nabla u\|_{L^{2}}^{4}+C(\bar{\rho})\|\nabla u\|_{L^{2}}^{2}+C\sigma^{m}\|\nabla u\|_{L^{3}}^{3},
\ea \ee which together with   Young's inequality, Lemma \ref{crle1}, \eqref{a16}, \eqref{a16} and \eqref{zz1} gives \eqref{h14}.

Now it remains to prove \eqref{h15}. Rewrite $ (\ref{a1})_2 $ as
\be\la{xdy1}\ba
\rho\dot{u}=\nabla F - \mu\nabla\times\curl u.
\ea \ee
Imposing $ \sigma^{m}\dot{u}^{j}[\pa/\pa t+\div
(u\cdot)] $ on $ (\ref{xdy1})^j ,$  summing with respect to $j$, and integrating over $\Omega,$
we obtain
\be\la{ax1}\ba &\left(\frac{\sigma^{m}}{2}\int\rho|\dot{u}|^{2}dx\right)_t-\frac{m}{2}\sigma^{m-1}\sigma'\int\rho|\dot{u}|^{2}dx \\
& = -\int\sigma^{m}\dot{u}^{j}\div(\rho\dot{u}^{j}u)dx - \frac{1}{2}\int\sigma^{m}\rho_t|\dot{u}|^{2}dx\\
&\quad+\int\sigma^{m}(\dot{u}\cdot\nabla F_t+\dot{u}^{j}\div(u\partial_jF))dx  \\
&\quad-\mu\int\sigma^{m}(\dot{u}\cdot\nabla\times\curl u_t+\dot{u}^{j}\div(u(\nabla\times\curl u)^{j})dx \\
& = \int\sigma^{m}(\dot{u}\cdot\nabla F_t+\dot{u}^{j}\div(u\partial_jF))dx\\
&\quad+\mu\int\sigma^{m}(-\dot{u}\cdot\nabla\times\curl u_t-\dot{u}^{j}\div(u(\nabla\times\curl u)^{j})dx \\
& =J_1+J_2,
\ea\ee
where we have utilized $(\ref{a1})_1 $ and the boundary condition $u\cdot n=0$.

Let's estimate $J_1$ and $J_2$. For $J_1$, by Gagliardo-Nirenberg's, Young's and H\"{o}lder's inequalities, we deduce from \eqref{divug}, $(\ref{a1})_1$ and \eqref{h19} that
\be\la{ax2}\ba J_1& = \int\sigma^{m}\dot{u}\cdot\nabla F_tdx+\int\sigma^{m}\dot{u}^{j}\div(u\partial_jF)dx \\
& = \int_{\partial\Omega}\sigma^{m}F_t\dot{u}\cdot nds - \int\sigma^{m}F_t\div\dot{u}dx + \int\sigma^{m}\dot{u}\cdot\nabla\div(uF)dx \\
& \quad - \sigma^{m}\int\dot{u}^{j}\div(\nabla_juF)dx \\
& = \int_{\partial\Omega}\sigma^{m}F_t\dot{u}\cdot nds +\int_{\partial\Omega}\sigma^{m}(\nabla F\cdot u)\dot{u}\cdot nds+\frac{1}{\lambda+2\mu}\int_{\partial\Omega}\sigma^{m}F^2\dot{u}\cdot nds \\
& \quad - (\lambda+2\mu)\int\sigma^{m}(\div\dot{u})^{2}dx+ (\lambda+2\mu)\int\sigma^{m}\div\dot{u}\nabla u:\nabla u dx\\
&\quad -\gamma\int\sigma^{m} P\div\dot{u}\,\div udx-\int\sigma^{m}F\div \dot{u}\,\div udx-\int\sigma^{m}\dot{u}\cdot\nabla u\cdot\nabla Fdx \\
&\quad - \frac{1}{\lambda+2\mu}\int\sigma^mF\dot{u}\cdot\nabla Fdx+\frac{1}{\lambda+2\mu}\int\sigma^m(P-P(\rho_\infty))\nabla F\cdot\dot{u}dx\\
&\quad +\frac{1}{\lambda+2\mu}\int\sigma^m(P-P(\rho_\infty))F\div\dot{u}dx
\\& \triangleq \sum_{i=1}^3\ti J_1^i - (\lambda+2\mu)\int\sigma^{m}(\div\dot{u})^{2}dx+\sigma^{m}\sum_{j=1}^7J_1^j.\ea\ee

\be\ba \sum_{j=1}^7|J_1^j|
& \leq C\|\nabla \dot{u}\|_{L^2} (\|\nabla u\|_{L^4}^2 +\|\nabla u\|_{L^2}) +C\|\dot{u}\|_{L^6} \|\nabla u\|_{L^2}\|\rho\dot{u}\|_{L^2}^\frac{1}{2} \|\rho\dot{u}\|_{L^6}^\frac{1}{2} \\&\quad +C (\|F\|_{L^2}\|\rho\dot{u}\|_{L^2}^\frac{1}{2}\|\rho\dot{u}\|_{L^6}^\frac{1}{2}\|\dot{u}\|_{L^6}
+\|\nabla F\|_{L^2}\|P-P(\rho_\infty)\|_{L^3}\|\dot{u}\|_{L^6})\\
&\quad +C \|F\|_{L^6}\|P-P(\rho_\infty)\|_{L^3}\|\nabla\dot{u}\|_{L^2}\\
& \leq \frac{\delta}{8} \|\nabla\dot{u}\|_{L^2}^2+C(\delta) \|\rho^{\frac{1}{2}}\dot{u}\|_{L^2}^2\|\nabla u\|_{L^2}^4+C(\delta) \|\rho^{\frac{1}{2}}\dot{u}\|_{L^2}^2\\
&\quad+C(\delta) \|\nabla u\|_{L^2}^2+C(\delta)\sigma^m\|\nabla u\|_{L^4}^4,
\ea\ee
where $\delta>0$ will be determined later.

Next, it follows  from the slip boundary condition $u\cdot n|_{\partial\Omega }=0$   that for $u^{\perp}\triangleq-u\times n,$
\be\la{bdd2}u=u^{\perp}\times n \mbox{ on }\partial\Omega. \ee
For the first boundary term $\ti J^1_1,$ denoting $h\triangleq u\cdot(\nabla n+\nabla n^{tr}),$   we have by  \eqref{bdd1}, Lemma \ref{le3}, and Gagliardo-Nirenberg's, Young's, and H\"{o}lder's inequalities,
\be\la{ax3}\ba
&\ti J^1_1+\left(\int_{\partial\Omega}\sigma^{m}(u\cdot\nabla n\cdot u)Fds\right)_t\\&=-\int_{\partial\Omega}\sigma^{m}F_t\,(u\cdot\nabla n\cdot u)ds + \left(\int_{\partial\Omega}\sigma^{m}(u\cdot\nabla n\cdot u)Fds\right)_t\\
& =  \int_{\partial\Omega}\sigma^{m}Fh\cdot u_tds +m\sigma^{m-1}\sigma'\int_{\partial\Omega}(u\cdot\nabla n\cdot u)Fds \\
& =   \int_{\partial\Omega}\sigma^{m}Fh\cdot\dot{u}ds -\int_{\partial\Omega}\sigma^{m}Fh\cdot((u^{\perp}\times n)\cdot\nabla u)ds\\
&\quad+m\sigma^{m-1}\sigma'\int_{\partial\Omega}(u\cdot\nabla n\cdot u)Fds   \\
& =  \int_{\partial\Omega}\sigma^{m}Fh\cdot\dot{u}ds - \int\sigma^{m}\nabla u^i\times u^\perp\cdot\nabla(Fh^i)dx\\
&\quad+  \int\sigma^{m}Fh^i\nabla\times u^\perp \cdot\nabla u^{i}dx+m\sigma^{m-1}\sigma'\int_{\partial\Omega}(u\cdot\nabla n\cdot u)Fds,\ea\ee
where in the last inequality we have used the following key observation due to \cite{CCL1}
\be\ba
&  -\int_{\partial\Omega} Fh\cdot((u^{\perp}\times n)\cdot\nabla u)ds   \\& =   -  \int \div(Fh^{i}\nabla u^{i}\times u^{\perp})dx \\
& =   - \int \nabla u^i\times u^\perp\cdot\nabla(Fh^i)dx + \int Fh^i\nabla\times u^\perp \cdot\nabla u^{i}dx. \ea\ee

It follows from \eqref{ljq05} and \eqref{h19} that
\be\ba \int_{\partial\Omega} | Fh\cdot\dot{u}|ds&\le C\|F\|_{L^4(\partial\Omega)}\|u\|_{L^4(\partial\Omega)}\|\dot u\|_{L^2(\partial\Omega)}\\& \le C\|\na F\|_{L^2( \Omega)}\|\na u\|_{L^2( \Omega)} \|\na \dot u\|_{L^2( \Omega)} \\& \le \frac{\de}{8}\|\na \dot u\|_{L^2( \Omega)}^2+C(\de)\|\n^{1/2} \dot u \|_{L^2( \Omega)}^2\|\na u\|_{L^2( \Omega)}^2  \ea\ee

\be\ba &\left| \int \nabla u^i\times u^\perp\cdot\nabla(Fh^i)dx\right| + \left|\int Fh^i\nabla\times u^\perp \cdot\nabla u^{i}dx\right|\\&
\le C\|\na F\|_{L^6}\|\na u\|_{L^2}\|u\|_{L^6}^2+C\|  F\|_{L^6}\|\na u\|_{L^2}\|u\|_{L^6}^2\\&\quad+C\|  F\|_{L^6}\|\na u\|^2_{L^3}\|u\|_{L^6}\\& \le \frac{\de}{8}\|\na \dot u\|_{L^2 }^2+C(\de)\|\n^{1/2} \dot u \|_{L^2 }^2\|\na u\|_{L^2 }^2  +C(\de) \|\na u\|_{L^2 }^6  +C(\de) \|\na u\|_{L^3 }^4  . \ea\ee

\be\la{ljq11}\ba  \left|\int_{\partial\Omega}(u\cdot\nabla n\cdot u)Fds\right| & \le C\|F\|_{L^4(\partial\Omega)}\|u\|^2_{L^4(\partial\Omega)}\\& \le C\|\na F\|_{L^2( \Omega)}\|\na u\|^2_{L^2( \Omega)}\\& \le \frac{\de}{8}\|\na \dot u\|_{L^2 }^2   +C(\de) \|\na u\|_{L^2 }^4 \ea\ee

\be\ba \ti J^1_1\le&-\left(\int_{\partial\Omega}\sigma^{m}(u\cdot\nabla n\cdot u)Fds\right)_t
 +\frac{\delta}{8}\sigma^{m}\|\nabla\dot{u}\|_{L^2}^2 \\&+Cm\sigma^{m-1}\sigma' \|\rho^{\frac{1}{2}}\dot{u}\|_{L^2}^{2} +C(\delta)\|\rho^{\frac{1}{2}}\dot{u}\|_{L^2}^{2}\|\nabla u\|_{L^2}^{2}\\
& +C(\delta)\sigma^{m}( \|\nabla u\|_{L^4}^{4}+\|\nabla u\|_{L^2}^{2}+\|\nabla u\|_{L^2}^{6}).
\ea\ee
For the second boundary term $\ti J^2_1$,
\be\la{cax31}\ba  \ti J^2_1
&=-\int_{\partial\Omega} (\nabla F\cdot u) (u\cdot\nabla n\cdot u)ds \\
&=-\int_{\partial\Omega} (u\cdot\nabla n\cdot u)\nabla F\cdot(u^{\perp}\times n)ds \\
&=-\int_{\partial\Omega} (u\cdot\nabla n\cdot u)(\nabla F\times u^{\perp})\cdot nds \\
&=-\int \div[(u\cdot\nabla n\cdot u)(\nabla F\times u^{\perp})]dx\\
&=-\int \nabla(u\cdot\nabla n\cdot u)\cdot(\nabla F\times u^{\perp})dx+\int (u\cdot\nabla n\cdot u)(\nabla\times u^{\perp}\cdot \nabla F)dx\\
&\le C (\|\nabla u\|_{L^2}\|u\|_{L^6}^2\|\nabla F\|_{L^6}+\|u\|_{L^6}^3\|\nabla F\|_{L^2}+\|\nabla F\|_{L^2}\|\nabla u\|_{L^2}^3
)\\
&\leq \frac{\delta}{8} \|\nabla\dot{u}\|_{L^2}^2+C(\delta) (\|\nabla u\|_{L^2}^2\|\rho^{\frac{1}{2}}\dot{u}\|_{L^2}^2+\|\nabla u\|_{L^2}^2+\|\nabla u\|_{L^2}^6).
\ea\ee
Similarly, for the last boundary term $\ti J^3_1$,
\be\la{b1}\ba\ti J^3_1&=-\frac{1}{\lambda+2\mu}\int_{\partial\Omega}\sigma^{m}(u\cdot\nabla n\cdot u)F^2ds\\
&\leq C\sigma^{m} \|u\|_{L^4(\partial\Omega)}^2\|F\|_{L^4(\partial\Omega)}^2 \\
&\leq C\sigma^{m}\|\rho^{\frac{1}{2}}\dot{u}\|_{L^2}^2\|\nabla u\|_{L^2}^2.
\ea\ee
Together with these estimates of the three boundary terms above, and by \eqref{ax2}, we conclude that
\be\la{ax399}\ba
 J_1& \leq -\left(\int_{\partial\Omega}\sigma^{m}(u\cdot\nabla n\cdot u)Fds\right)_t- (\lambda+2\mu)\int\sigma^{m}(\div\dot{u})^{2}dx \\
&\quad+\frac{\delta}{2}\sigma^{m}\|\nabla\dot{u}\|_{L^2}^2 + C(\delta)\sigma^{m}\|\rho^{\frac{1}{2}}\dot{u}\|_{L^2}^2(\|\nabla u\|_{L^2}^4+1)\\
& \quad+C(\delta)\sigma^{m}(\|\nabla u\|_{L^2}^2+\|\nabla u\|_{L^2}^6+\|\nabla u\|_{L^4}^4).
\ea\ee
By \eqref{curlug}, a direct computation shows that
\be\la{b2}\ba
J_2&=-\mu\int\sigma^{m}\dot{u}\cdot(\nabla\times\curl u_t)dx-\mu\int\sigma^{m}\dot{u}\cdot(\nabla\times\curl u)\,\div udx\\
&\quad-\mu\int\sigma^{m} u^{i}\dot{u}\cdot\nabla\times(\nabla_i\curl u)dx  \\
&=-\mu\int\sigma^{m}|\curl \dot{u}|^{2}dx-\mu\int\sigma^{m}\curl \dot{u}\cdot\curl(u\cdot\nabla u)dx \\
&\quad+\mu\int\sigma^{m}(\curl u\times\dot{u})\cdot\nabla\div udx -\mu\int\sigma^{m}\div u\,\curl u\cdot\curl \dot{u}dx\\
&\quad-\mu\int\sigma^{m} u^{i}\div(\nabla_i\curl u\times\dot{u})dx+\mu\int\sigma^{m} u^i\nabla_i\curl u\cdot\curl\dot{u}dx  \\
&=-\mu\int\sigma^{m}|\curl\dot{u}|^{2}dx-\mu\int\sigma^{m}\curl\dot{u}\cdot(\nabla u^i\times\nabla_i u) dx \\
&\quad+\mu\int\sigma^{m}(\curl u\times\dot{u})\cdot\nabla\div udx-\mu\int\sigma^{m}\div u\,\curl u\cdot\curl \dot{u}dx \\
&\quad-\mu\int\sigma^{m}  u\cdot\nabla\div(\curl u\times\dot{u})dx+\mu\int\sigma^{m} u^i\div(\curl u\times\nabla_i\dot{u})dx \\
&=-\mu\int\sigma^{m}|\curl \dot{u}|^{2}dx+\mu\int\sigma^{m}(\curl u\times\nabla u^i)\cdot\nabla_i\dot{u}dx \\
&\quad-\mu\int\sigma^{m}\curl\dot{u}\cdot(\nabla u^i\times\nabla_i u)dx-\mu\int\sigma^{m}\div u\,\curl u\cdot\curl \dot{u}dx\\
&\leq-\mu\int\sigma^{m}|\curl \dot{u}|^{2}dx+C\sigma^{m}\|\nabla\dot{u}\|_{L^2}\|\nabla u\|_{L^4}^2\\
&\leq-\mu\int\sigma^{m}|\curl \dot{u}|^{2}dx+\frac{\delta}{2}\sigma^{m}\|\nabla\dot{u}\|_{L^2}^2+ C(\delta)\sigma^{m}\|\nabla u\|_{L^4}^4.
\ea\ee
Therefore, we deduce from \eqref{ax1}, \eqref{ax399} and \eqref{b2} that
\be\la{ax40}\ba
&\frac{1}{2}\left(\sigma^{m}\|\rho^{\frac{1}{2}}\dot{u}\|_{L^2}^2\right)_t+(\lambda+2\mu)\sigma^{m}\|\div\dot{u}\|_{L^2}^2+\mu\sigma^{m}\|\curl\dot{u}\|_{L^2}^2\\
& \leq -\left(\int_{\partial\Omega}\sigma^{m}(u\cdot\nabla n\cdot u)Fds\right)_t +\delta\sigma^{m}\|\nabla\dot{u}\|_{L^2}^2\\
&\quad+ C(\delta)\sigma^{m}\|\rho^{\frac{1}{2}}\dot{u}\|_{L^2}^2(\|\nabla u\|_{L^2}^4+1)\\
& \quad+ C(\delta)\sigma^{m}(\|\nabla u\|_{L^2}^2+\|\nabla u\|_{L^2}^6+\|\nabla u\|_{L^4}^4).
\ea\ee

Next, combining  $\eqref{ch1}_1,$ \eqref{bdd2}, and \eqref{bdd1} gives   $$(\dot{u}-(u\cdot\nabla n)\times u^{\perp})\cdot n|_{\partial\Omega}=0,$$ which together with  \eqref{ljq01} yields
\be\la{tb11} \ba\|\nabla\dot{u}\|_{L^2}
&\leq C(\|\div \dot{u}\|_{L^2}+\|\curl \dot{u}\|_{L^2}+\|\nabla[(u\cdot\nabla n)\times u^\perp]\|_{L^2})\\&\leq C(\|\div \dot{u}\|_{L^2}+\|\curl \dot{u}\|_{L^2}+\|\nabla u\|_{L^2}^2+\|\nabla u\|_{L^4}^2),
\ea\ee where in the second inequality we have used
\bnn \ba
 \|\nabla[(u\cdot\nabla n)\times u^\perp]\|_{L^2(\Omega)}
&=\|\nabla[(u\cdot\nabla n)\times u^\perp]\|_{L^2(B_{2R})}\\
&\leq C(R)(\||u||\nabla u|\|_{L^2(B_{2R})}+\|u\|_{L^4(B_{2R})}^2)\\
&\le C(R)(\|\nabla u\|_{L^4(B_{2R})}^2+\|u\|_{L^6(B_{2R})}^2)\\
&\le C(\|\nabla u\|_{L^4}^2+\|u\|_{L^6}^2)\\
&\le C(\|\nabla u\|_{L^4}^2+\|\nabla u\|_{L^2}^2),
\ea\enn
due to  \eqref{ljq10}, \eqref{g1} and H\"{o}lder's inequality.

Now choose $\delta$ small enough, it follows from \eqref{tb11} and \eqref{ax40} that
\be\la{ax401}\ba
&\left(\sigma^{m}\|\rho^{\frac{1}{2}}\dot{u}\|_{L^2}^2\right)_t+(\lambda+2\mu)\sigma^{m}\|\div\dot{u}\|_{L^2}^2+\mu\sigma^{m}\|\curl\dot{u}\|_{L^2}^2\\
& \leq  -\left(2\int_{\partial\Omega}\sigma^{m}(u\cdot\nabla n\cdot u)Fds\right)_t +C\sigma^{m}\|\rho^{\frac{1}{2}}\dot{u}\|_{L^2}^2(\|\nabla u\|_{L^2}^4+1)\\
&\quad+C\sigma^{m}(\|\nabla u\|_{L^2}^2+\|\nabla u\|_{L^2}^6+\|\nabla u\|_{L^4}^4).
\ea\ee

Taking $m=3$ in \eqref{ax401}, and using \eqref{tb11}, \eqref{ljq11}, \eqref{zz1} and Lemma \ref{le3}, we  establish \eqref{h15} and complete the proof of Lemma \ref{xcrle1}.
\end{proof}

\begin{lemma}\la{zc1} If $(\n,u)$ be a smooth solution of \eqref{a1}-\eqref{ch2} on $\O \times (0,T] $ satisfying \eqref{zz1} and $\|\nabla u_0\|_{L^2}\leq M$, then there exist two positive constants $C=C(\on,M)$ and $\varepsilon_1$ depending only on $\mu,\,\,\lambda,\,\,\gamma,\,\,a,\,\,\rho_\infty,\,\,\bar{\rho},\,\,\Omega$ and $M,$ such that
   \be\la{uv1}  \sup_{0\le t\le \si(T)}\|\na
u\|_{L^2}^2+\int_0^{\si(T)}\int\n|\dot u|^2dxdt\le
C(\on,M), \ee
 \be\la{uv2}  \sup_{0\le t\le \si(T)}t\int\n|\dot u|^2dx+\int_0^{\si(T)}t\int|\nabla\dot{u}|^2dxdt\le
C(\on,M). \ee
\end{lemma}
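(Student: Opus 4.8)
The plan is to recycle the differential identities already derived in the proof of Lemma \ref{xcrle1}, but to apply them with the small weight exponents $m=0$ and $m=1$ rather than $m=1,3$; near $t=0$ this makes the bounds depend on the data $\|\nabla u_0\|_{L^2}\le M$ and on the basic energy estimate of Lemma \ref{le2} rather than on the $\sigma$-weight. I will use throughout that on $[0,\sigma(T)]$ one has $\sigma(t)=t\le1$ and $\sigma'\equiv1$; that, writing $B_0(t)\triangleq(\lambda+2\mu)\|\div u\|_{L^2}^2+\mu\|\curl u\|_{L^2}^2-2\int(P-P(\rho_\infty))\div u\,dx$, Lemma \ref{crle1} and Young's inequality give $B_0(t)\simeq\|\nabla u\|_{L^2}^2$ modulo $C\|P-P(\rho_\infty)\|_{L^2}^2$; and that $\rho\le2\bar\rho$ together with $\|G(\rho)\|_{L^1}\le CC_0$ yields $\|P-P(\rho_\infty)\|_{L^p}\le C(\bar\rho)C_0^{1/p}$ for $p\in[2,6]$, so every pressure norm occurring below is bounded, indeed small.

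For \eqref{uv1} I would take $m=0$ in \eqref{I4}; the term carrying the factor $m\sigma^{m-1}\sigma'$ then vanishes and one is left with $\frac{d}{dt}B_0+\int\rho|\dot u|^2dx\le C\|\nabla u\|_{L^2}^2+C\|\nabla u\|_{L^2}^4+C\|\nabla u\|_{L^3}^3$. Interpolating $\|\nabla u\|_{L^3}\le C\|\nabla u\|_{L^2}^{1/2}\|\nabla u\|_{L^6}^{1/2}$ and invoking \eqref{h18} with $p=6$ to bound $\|\nabla u\|_{L^6}\le C(\|\rho^{1/2}\dot u\|_{L^2}+\|\nabla u\|_{L^2}+1)$, Young's inequality turns the right-hand side into $\frac12\|\rho^{1/2}\dot u\|_{L^2}^2+C+C\|\nabla u\|_{L^2}^6$. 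I then integrate over $[0,t]\subset[0,\sigma(T)]$, use $B_0(0)\le C\|\nabla u_0\|_{L^2}^2+C\|P(\rho_0)-P(\rho_\infty)\|_{L^2}^2\le C(\bar\rho,M)$, and control the dangerous term by the basic energy bound,
$$\int_0^t\|\nabla u\|_{L^2}^6\,ds\le\Big(\sup_{[0,t]}\|\nabla u\|_{L^2}^4\Big)\int_0^T\|\nabla u\|_{L^2}^2\,ds\le CC_0\sup_{[0,t]}\|\nabla u\|_{L^2}^4,$$
obtaining a closed inequality $\Phi(t)\le C(\bar\rho,M)+CC_0\big(\Phi(t)+\Phi(t)^2\big)$ for $\Phi(t)\triangleq\sup_{[0,t]}B_0$. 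A standard continuity argument starting from $\Phi(0)=B_0(0)\le C(\bar\rho,M)$ then closes this provided $C_0\le\varepsilon_1$, with $\varepsilon_1$ depending only on $\mu,\lambda,\gamma,a,\rho_\infty,\bar\rho,\Omega,M$; combined with the dissipation $\int_0^{\sigma(T)}\|\rho^{1/2}\dot u\|_{L^2}^2\,dt\le C(\bar\rho,M)$ absorbed along the way, this gives \eqref{uv1}.

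For \eqref{uv2} I would take $m=1$ in \eqref{ax401}, which on $[0,\sigma(T)]$ reads
$$\frac{d}{dt}\big(t\|\rho^{1/2}\dot u\|_{L^2}^2\big)+(\lambda+2\mu)t\|\div\dot u\|_{L^2}^2+\mu t\|\curl\dot u\|_{L^2}^2\le-\frac{d}{dt}\Big(2t\!\int_{\partial\Omega}(u\cdot\nabla n\cdot u)F\,ds\Big)+C\|\rho^{1/2}\dot u\|_{L^2}^2(1+\|\nabla u\|_{L^2}^4)+Ct\big(\|\nabla u\|_{L^2}^2+\|\nabla u\|_{L^2}^6+\|\nabla u\|_{L^4}^4\big),$$
and integrate over $[0,t]\subset[0,\sigma(T)]$. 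The boundary term is bounded via \eqref{ljq05} and \eqref{h19} by $\frac12 t\|\rho^{1/2}\dot u\|_{L^2}^2+Ct\|\nabla u\|_{L^2}^4$, the first half being absorbed on the left and the second controlled by \eqref{uv1}; since \eqref{uv1} also supplies $\|\nabla u\|_{L^2}^4\le C(\bar\rho,M)$ and $\int_0^{\sigma(T)}\|\rho^{1/2}\dot u\|_{L^2}^2\,dt\le C(\bar\rho,M)$, every term on the right except $\int_0^t s\|\nabla u\|_{L^4}^4\,ds$ is at once $\le C(\bar\rho,M)$. For that last term I would use \eqref{h18} with $p=4$, i.e. $\|\nabla u\|_{L^4}^4\le C\|\nabla u\|_{L^2}(1+\|\rho^{1/2}\dot u\|_{L^2}^3)+C\|\nabla u\|_{L^2}^4$, together with the splitting $s\|\rho^{1/2}\dot u\|_{L^2}^3=(s\|\rho^{1/2}\dot u\|_{L^2}^2)^{1/2}\,s^{1/2}\|\rho^{1/2}\dot u\|_{L^2}^2$, which by \eqref{uv1} yields $\int_0^t s\|\nabla u\|_{L^4}^4\,ds\le C(\bar\rho,M)\big(1+\Psi(t)^{1/2}\big)$ with $\Psi(t)\triangleq\sup_{[0,t]}s\|\rho^{1/2}\dot u(s)\|_{L^2}^2$. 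Hence $\Psi(t)\le C(\bar\rho,M)+C(\bar\rho,M)\Psi(t)^{1/2}$, so $\Psi(t)\le C(\bar\rho,M)$ by Young's inequality; the bound on $\int_0^{\sigma(T)}s\|\nabla\dot u\|_{L^2}^2\,ds$ then follows from the parabolic terms on the left, \eqref{tb11} and \eqref{uv1}, completing \eqref{uv2}.

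I expect the closure of \eqref{uv1} to be the main obstacle: the term $\|\nabla u\|_{L^3}^3$ cannot be absorbed into the good quantity $\|\rho^{1/2}\dot u\|_{L^2}^2$ without leaving a super-quadratic power $\|\nabla u\|_{L^2}^6$ of the very quantity one is estimating, and it is precisely the smallness of $C_0$ --- funnelled through the basic energy bound $\int_0^T\|\nabla u\|_{L^2}^2\,dt\le CC_0$ so as to kill $\int_0^t\|\nabla u\|_{L^2}^6\,ds$ --- that lets the bootstrap go through; this is the source of the threshold $\varepsilon_1$ in the statement. Once \eqref{uv1} is in hand, \eqref{uv2} is comparatively routine, and the $t$-weight makes the compatibility condition \eqref{dt3} unnecessary for both of these short-time estimates.
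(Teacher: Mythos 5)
Your argument is correct, and the second half coincides with the paper's: for \eqref{uv2} the paper also takes $m=1$ in \eqref{ax401}, integrates in time, controls the boundary term exactly as you do (via \eqref{ljq05}, \eqref{h19} and absorption of $\tfrac12 t\|\rho^{1/2}\dot u\|_{L^2}^2$), and closes the term $\int_0^{\sigma(T)}t\|\nabla u\|_{L^4}^4dt$ through \eqref{h18} and the same splitting $t\|\rho^{1/2}\dot u\|_{L^2}^3=(t\|\rho^{1/2}\dot u\|_{L^2}^2)^{1/2}t^{1/2}\|\rho^{1/2}\dot u\|_{L^2}^2$, ending with the Young-type closure $\Psi\le C+C\Psi^{1/2}$; this part of your proposal is essentially a transcription of \eqref{xbh16}--\eqref{xbh17}.

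For \eqref{uv1}, however, your route is genuinely different from the paper's. The paper does not use \eqref{I4} with $m=0$; it multiplies $\eqref{a1}_2$ by $u_t$ (see \eqref{xbh13}), so the troublesome term is the convective correction $\int\rho\dot u\cdot(u\cdot\nabla u)dx$, which is bounded by $\|\rho^{1/2}\dot u\|_{L^2}\|\rho^{1/3}u\|_{L^3}\|\nabla u\|_{L^6}$; here the third component of the bootstrap hypothesis \eqref{zz1}, namely $A_3(\sigma(T))\le 2C_0^{1/4}$, makes the coefficient of $\|\rho^{1/2}\dot u\|_{L^2}^2$ equal to $C(\bar\rho)C_0^{1/12}+\tfrac14$, so that for $C_0\le\varepsilon_1\triangleq(4C(\bar\rho))^{-12}$ the material-derivative term is absorbed and the remaining inequality \eqref{xbh14} is \emph{linear} in $\|\nabla u\|_{L^2}^2$ and closes by plain Gronwall. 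You instead test with $\dot u$ ($m=0$ in \eqref{I4}, which is legitimate since all $m\sigma^{m-1}\sigma'$ terms carry the factor $m$), accept the resulting $\|\nabla u\|_{L^3}^3$, convert it via interpolation and \eqref{h18} into $\tfrac12\|\rho^{1/2}\dot u\|_{L^2}^2+C+C\|\nabla u\|_{L^2}^6$, and kill the super-linear term through $\int_0^T\|\nabla u\|_{L^2}^2dt\le CC_0$ and a continuity/bootstrap argument on the quadratic inequality $\Phi\le C(\bar\rho,M)+CC_0(\Phi+\Phi^2)$. Both closures are sound and both produce an $\varepsilon_1$ depending only on the admissible parameters. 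What each buys: the paper's argument is linear (no continuity argument) and yields an explicit threshold, but it leans on the $A_3$ part of \eqref{zz1}; your argument never touches $A_3$ (only the density bound and the basic energy estimate enter), so it is slightly more self-contained within the bootstrap, at the price of the nonlinear absorption. Your closing remarks — that the $\|\nabla u\|_{L^2}^6$ term is the crux of \eqref{uv1} in your route, and that the compatibility condition \eqref{dt3} is not needed for either estimate — are consistent with the paper, which indeed only invokes \eqref{dt3} later, in Lemma \ref{xle1}.
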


\begin{proof} Multiplying $(\ref{a1})_2 $ by $u_t$ and integrating over $\Omega$, a direct calculation gives
\be \la{xbh13} \ba
&\left(\frac{\lambda+2\mu}{2}\int(\div u)^{2}dx+\frac{\mu}{2}\int|\curl u|^{2}dx-\int(P-P(\rho_\infty))\div udx\right)_t+\int\rho|\dot u|^{2}dx \\
& =\int\rho\dot u\cdot(u\cdot\nabla u)dx-\int P_t\div udx \\
& =\int\rho\dot u\cdot(u\cdot\nabla u)dx - \frac{1}{\lambda+2\mu}\int(P-P(\rho_\infty))F\div u dx \\
&\quad- \frac{1}{\lambda+2\mu}\int(P-P(\rho_\infty))\nabla F\cdot u dx- \frac{1}{2(\lambda+2\mu)}\int(P-P(\rho_\infty))^{2}\div u dx\\
& \quad  +\gamma\int P(\div u)^2dx \\
& \leq C(\bar{\rho})(\|\rho^{\frac{1}{2}}\dot u\|_{L^2}\|\rho^{\frac{1}{3}}u\|_{L^3}\|\nabla u\|_{L^6}+\|P-P(\rho_\infty)\|_{L^3}\|\nabla F\|_{L^2}\|u\|_{L^6})\\
&\quad+C(\bar{\rho})(\|\nabla u\|_{L^2}\|F\|_{L^2}+\|P-P(\rho_\infty)\|_{L^2}\|\nabla u\|_{L^2}+\|\nabla u\|_{L^2}^2)\\
&\leq (C(\bar{\rho})C_0^{\frac{1}{12}}+\frac{1}{4})\|\rho^{\frac{1}{2}}\dot u\|_{L^2}^{2}+C(\bar{\rho})(\|\nabla u\|_{L^2}^2+\|P-P(\rho_\infty)\|_{L^2}^2)\\
&\quad+C(\bar{\rho})\|P-P(\rho_\infty)\|_{L^6}^2,
\ea\ee
where we have taken advantage of \eqref{zz1}, \eqref{h19}, \eqref{h20}, \eqref{h18}, H\"{o}lder's, Poincar\'{e}'s and Young's inequalities.

Hence,
\be \la{xbh14} \ba
&\left((\lambda+2\mu)\|\div u\|_{L^{2}}^{2}+\mu\|\curl u\|_{L^{2}}^{2}-2\int(P-\bar{P})\div udx\right)_t
+\int\rho|\dot u|^{2}dx\\
& \le C(\bar{\rho})\left(\|\nabla u\|_{L^{2}}^{2}+\|P-P(\rho_\infty)\|_{L^{2}}^{2}+\|P-P(\rho_\infty)\|_{L^6}^2\right) ,
\ea\ee
provide that $C_0<\varepsilon_1\triangleq(4C(\bar{\rho}))^{-12}$.

Therefore, it follows from Gronwall's inequality, Lemmas \ref{crle1} and \ref{le2}  that \eqref{uv1} holds.

Finally, we will claim \eqref{uv2}.  Taking $m=1$ in \eqref{ax401}, and integrating over $(0,\sigma(T)]$, we deduce from \eqref{uv1} that
\be \la{xbh16} \ba
&\sup_{0\le t\le  \si(T)}t\|\rho^{\frac{1}{2}}\dot{u}\|_{L^2}^2+\int_0^{\sigma(T)}t\|\nabla\dot{u}\|_{L^2}^2dt\\
& \leq C\int_0^{\sigma(T)}\|\rho^{\frac{1}{2}}\dot{u}\|_{L^2}^2dt
+C\int_0^{\sigma(T)}t\|\rho^{\frac{1}{2}}\dot{u}\|_{L^2}^2(\|\nabla u\|_{L^2}^4+1)dt\\
&\quad +C\int_0^{\sigma(T)}t(\|\nabla u\|_{L^2}^2+\|\nabla u\|_{L^2}^6)dt+C\int_0^{\sigma(T)}t\|\nabla u\|_{L^4}^4dt\\
& \quad +C\int_0^{\sigma(T)}\|\nabla u\|_{L^2}^4dt+Ct(\|\nabla u\|_{L^2}^2+\|\nabla u\|_{L^2}^4)\\
& \le  C\int_0^{\si(T)}t\|\na u\|_{L^4}^4dt+C(\bar{\rho}, M). \ea\ee
On the other hand, by \eqref{h18} and \eqref{uv1},
\be \la{xbh17}\ba
&\int_0^{\sigma(T)}t\|\nabla u\|_{L^{4}}^{4}dt \\
& \le C\int_0^{\sigma(T)}t\left(\|\rho\dot{u}\|_{L^2}^3\|\nabla u\|_{L^2}+\|P-P(\rho_\infty)\|_{L^6}^3\|\nabla u\|_{L^2}+\|\nabla u\|_{L^2}^4\right)dt\\
& \le C\int_0^{\sigma(T)}t^{\frac{1}{2}}(\|\nabla u\|_{L^2}^2)^{\frac{1}{2}}(t\|\rho^{\frac{1}{2}}\dot{u}\|_{L^{2}}^{2})^{\frac{1}{2}}(\|\rho^{\frac{1}{2}}\dot{u}\|_{L^2}^2)dt+C(\bar{\rho})\\
& \leq C(\bar{\rho}, M)\left(\sup_{0\le t\le  \si(T)}t\|\rho^{\frac{1}{2}}\dot{u}\|_{L^2}^2\right)^{\frac{1}{2}}+C(\bar{\rho}) .
\ea\ee
Therefore, together with \eqref{xbh16} and \eqref{xbh17}, we get \eqref{uv2}.
\end{proof}
\begin{lemma}\la{nzc1} Suppose $(\n,u)$ is a smooth solution of \eqref{a1}-\eqref{ch2}   on $\O \times (0,T] $ satisfying \eqref{zz1} and the assumption $\|\nabla u_0\|_{L^2}\leq M$, then there
exists a positive constant  $\varepsilon_2\,(\leq\varepsilon_1)$ depending only on $\mu ,  \lambda ,   \ga ,  a ,  \on, \rho_\infty,$ $\Omega$ and $M$ such
that
\be\la{xuv1} \sup_{0\le t\le  \si(T) }\int \n |u|^{3}dx\le C_0^{\frac{1}{4}} ,\ee
provided $C_0<\varepsilon_2$.
\end{lemma}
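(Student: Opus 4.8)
The plan is to control $\int\rho|u|^3dx$ on the time interval $(0,\sigma(T)]$ by differentiating it in time, estimating the resulting terms using the momentum equation, and then exploiting the smallness of $C_0$ together with the a priori bounds \eqref{uv1}--\eqref{uv2} from Lemma \ref{zc1}. First I would compute, using $(\ref{a1})_1$ and the boundary condition $u\cdot n=0$,
$$\frac{d}{dt}\int\rho|u|^3dx = \int\rho\frac{d}{dt}(|u|^3)dx + \int\rho_t|u|^3dx = 3\int\rho|u||u\cdot\dot u|dx - 3\int\rho|u||u\cdot(u\cdot\nabla u)|dx - \int\div(\rho u)|u|^3dx.$$
After integrating by parts the last term and cancelling, this reduces to something of the form $\frac{d}{dt}\int\rho|u|^3dx \le C\int\rho|u|^2|\dot u|dx$. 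Then I would bound the right-hand side by H\"older's inequality: $\int\rho|u|^2|\dot u|dx \le C\|\rho^{1/2}u\|_{L^2}\|\rho^{1/2}u\|_{L^6}^{?}\cdots$ — more precisely, splitting $\rho|u|^2|\dot u| = (\rho^{1/2}|u|)(\rho^{1/3}|u|)^{?}\cdots$; the cleanest route is $\int\rho|u|^2|\dot u|dx\le \|\rho^{1/2}\dot u\|_{L^2}\|\rho^{1/4}u\|_{L^4}^2\|\rho^{1/2}\|_{L^\infty}^{?}$, using $\rho\le2\bar\rho$, to get $\le C\|\rho^{1/2}\dot u\|_{L^2}\||u|^2\|_{L^2}\le C\|\rho^{1/2}\dot u\|_{L^2}\|u\|_{L^4}^2\le C\|\rho^{1/2}\dot u\|_{L^2}\|\nabla u\|_{L^2}^{1/2}\|\nabla u\|_{L^2}^{3/2}$ via Gagliardo--Nirenberg \eqref{g1} and Lemma \ref{crle1}.

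Integrating in $t$ over $(0,\sigma(T)]$ and using the initial bound $\int\rho_0|u_0|^3dx\le C\bar\rho^{1/2}\|\rho_0^{1/2}u_0\|_{L^2}^{?}\cdots\le CC_0^{1/2}\bar\rho^{?}$ (obtained from $\|\rho_0^{1/2}u_0\|_{L^2}^2\le CC_0$, $\rho_0\le\bar\rho$, and $\||u_0|^2\|_{L^2}\le C\|\nabla u_0\|_{L^2}^2\le CM^2$, so that $\int\rho_0|u_0|^3dx\le\bar\rho\|u_0\|_{L^3}^3\le C\bar\rho\|\nabla u_0\|_{L^2}^3$ — actually I would instead interpolate $\int\rho_0|u_0|^3\le \|\rho_0^{1/2}u_0\|_{L^2}\||u_0|^2\|_{L^2}\bar\rho^{1/2}\le CC_0^{1/2}M^2\bar\rho^{1/2}$), I arrive at
$$\sup_{0\le t\le\sigma(T)}\int\rho|u|^3dx \le CC_0^{1/2} + C\int_0^{\sigma(T)}\|\rho^{1/2}\dot u\|_{L^2}\|\nabla u\|_{L^2}^2\,dt.$$
For the time integral I would split $\int_0^{\sigma(T)} = \int_0^{\sigma(T)} t^{-1/2}\cdot t^{1/2}\cdots$ — more usefully, use Cauchy--Schwarz in the form $\int_0^{\sigma(T)}\|\rho^{1/2}\dot u\|_{L^2}\|\nabla u\|_{L^2}^2dt\le \left(\int_0^{\sigma(T)}\|\rho^{1/2}\dot u\|_{L^2}^2dt\right)^{1/2}\left(\int_0^{\sigma(T)}\|\nabla u\|_{L^2}^4dt\right)^{1/2}$, then bound the first factor by $C(\bar\rho,M)$ from \eqref{uv1} and the second factor by combining $\sup_t\|\nabla u\|_{L^2}^2\le C(\bar\rho,M)$ (again \eqref{uv1}) with $\int_0^{\sigma(T)}\|\nabla u\|_{L^2}^2dt\le CC_0$ from \eqref{a16}. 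This yields the bound $\le C(\bar\rho,M)C_0^{1/2}$, hence $\sup_{0\le t\le\sigma(T)}\int\rho|u|^3dx\le C(\bar\rho,M)C_0^{1/2}\le C_0^{1/4}$ provided $C_0$ is taken small enough, which defines $\varepsilon_2$ (and one takes $\varepsilon_2\le\varepsilon_1$ so that Lemma \ref{zc1} applies).

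The main obstacle I anticipate is arranging the H\"older exponents in the bound for $\int\rho|u|^2|\dot u|dx$ so that every resulting norm is one that has already been controlled — in particular making sure the powers of $\|\nabla u\|_{L^2}$ and $\|\rho^{1/2}\dot u\|_{L^2}$ appearing after Gagliardo--Nirenberg match up with the integrability furnished by \eqref{a16}, \eqref{uv1} and \eqref{uv2}, and that the final time integral genuinely produces a positive power of $C_0$ rather than just a constant. A secondary point of care is the treatment near $t=0$: since on $(0,\sigma(T)]$ the weight $\sigma(t)=t$ is genuinely active in \eqref{uv2} but \eqref{uv1} gives the unweighted bound, I must make sure to use \eqref{uv1} (not \eqref{uv2}) for $\int_0^{\sigma(T)}\|\rho^{1/2}\dot u\|_{L^2}^2dt$, which is exactly what Lemma \ref{zc1} provides, so no weight is lost and the argument closes cleanly.
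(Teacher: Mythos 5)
Your overall strategy (an energy-type estimate for $\int\rho|u|^3dx$, closed by Cauchy--Schwarz in time with \eqref{uv1} and \eqref{a16}, then choosing $\varepsilon_2$ so that $C(\bar\rho,M)C_0^{1/4}\le 1$) is essentially the paper's, which tests $(\ref{a1})_2$ with $3|u|u$ rather than working through $\rho\dot u$; that difference is cosmetic. However, there is a genuine gap at the two places where you discard the density weight. The inequalities $\|u\|_{L^4}\le C\|\nabla u\|_{L^2}$ (used in $\int\rho|u|^2|\dot u|dx\le C\|\rho^{1/2}\dot u\|_{L^2}\|u\|_{L^4}^2\le C\|\rho^{1/2}\dot u\|_{L^2}\|\nabla u\|_{L^2}^2$) and $\|u_0\|_{L^3}\le C\|\nabla u_0\|_{L^2}$, $\||u_0|^2\|_{L^2}\le C\|\nabla u_0\|_{L^2}^2$ (used for the initial datum) are false in the exterior domain $\Omega$: Gagliardo--Nirenberg \eqref{g1} gives $\|u\|_{L^4}\le C\|u\|_{L^2}^{1/4}\|\nabla u\|_{L^2}^{3/4}$ and needs $u\in L^2$, which is not available here --- the solution only carries $\nabla u\in L^2$ (hence $u\in L^6$) and $\rho^{1/2}u\in L^2$, and for any $p<6$ an estimate $\|u\|_{L^p}\le C\|\nabla u\|_{L^2}$ already fails on $\r^3$ by scaling (Lemma \ref{crle1} controls $\nabla v$, not $v$, so it does not help either). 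This loss of low-order integrability of $u$ in the unbounded domain is precisely one of the difficulties this paper is built around, so it cannot be waved through.

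The gap is repairable by keeping the weight and putting the excess integrability into $L^6$: $\int\rho|u|^2|\dot u|dx\le\|\rho^{1/2}\dot u\|_{L^2}\bigl(\int\rho|u|^4dx\bigr)^{1/2}$ with $\int\rho|u|^4dx\le C\bar\rho^{1/2}\|\rho^{1/2}u\|_{L^2}\|u\|_{L^6}^3\le CC_0^{1/2}\|\nabla u\|_{L^2}^3$ by \eqref{a16}, \eqref{zz1} and $\|u\|_{L^6}\le C\|\nabla u\|_{L^2}$; similarly, for the initial datum one uses, as in \eqref{bcbh2}, $\int\rho_0|u_0|^3dx\le C(\bar\rho)\|\rho_0^{1/2}u_0\|_{L^2}^{3/2}\|\nabla u_0\|_{L^2}^{3/2}\le C(\bar\rho,M)C_0^{1/2}$. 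With these corrections your time integral becomes $CC_0^{1/4}\int_0^{\sigma(T)}\|\rho^{1/2}\dot u\|_{L^2}\|\nabla u\|_{L^2}^{3/2}dt\le C(\bar\rho,M)C_0^{3/4}$ by Cauchy--Schwarz, \eqref{uv1} and \eqref{a16}, and the conclusion follows exactly as you outline; this is then fully parallel to the paper's proof, which instead bounds the right-hand side by $\int|u||\nabla u|^2dx+\int|P-P(\rho_\infty)||u||\nabla u|dx$ and invokes \eqref{h18} for $\|\nabla u\|_{L^6}$, but closes with the same ingredients and the same choice of $\varepsilon_2$.
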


\begin{proof}
Multiplying $(\ref{a1})_2$ by $3|u|u$, integrating over $ \O$, by Lemma \ref{le2}, \eqref{h18} and \eqref{uv1}, we check that
\bnn\ba
&\left(\int\rho|u|^{3}dx\right)_t=-3(\lambda+2\mu)\int\div u\,\div(|u|u)dx-3\mu\int\curl u\cdot\curl(|u|u)dx  \\
& \quad + 3\int(P-P(\rho_\infty))\div(|u|u)dx\\
&\leq C\int|u||\nabla u|^{2}dx+C\int|P-P(\rho_\infty)||u||\nabla u|dx\\
&\leq C\|u\|_{L^6}\|\nabla u\|_{L^2}^{\frac{3}{2}}\|\nabla u\|_{L^6}^{\frac{1}{2}}+C\|P-P(\rho_\infty)\|_{L^3}\|u\|_{L^6}\|\nabla u\|_{L^2}\\
&\leq C\|\nabla u\|_{L^2}^{\frac{5}{2}}(\|\rho\dot{u}\|_{L^2}+\|P-P(\rho_\infty)\|_{L^6}+\|\nabla u\|_{L^2})^{\frac{1}{2}}+C(\bar{\rho})C_0^{\frac{1}{6}}\|\nabla u\|_{L^2}^2\\
&\leq C(\bar{\rho})(\|\nabla u\|_{L^2}^2)^{3/4}(\|\rho^{\frac{1}{2}}\dot{u}\|_{L^2}^2)^{\frac{1}{4}}\|\nabla u\|_{L^2}
+CC_0^{\frac{1}{12}}(\|\nabla u\|_{L^2}^2)^{\frac{3}{4}}\|\nabla u\|_{L^2}\\
&\quad+C(\|\nabla u\|_{L^2}^2)\|\nabla u\|_{L^2}+C(\bar{\rho})C_0^{\frac{1}{6}}\|\nabla u\|_{L^2}^2,
\ea\enn
which along with \eqref{a16} and \eqref{uv1} yields
\be\la{bcbh1} \ba
&\sup_{0\le t\le  \si(T) }\int\rho|u|^{3}dx\\
&\leq C(\bar{\rho},M)\left(\int_0^{\sigma(T)}\|\nabla u\|_{L^2}^2dt\right)^{\frac{1}{2}}
+\int\rho_0|u_0|^3dx+CC_0+CC_0^{\frac{1}{2}}\\
&\leq C(\bar{\rho},M)C_0^{\frac{1}{2}},
\ea\ee
provided $C_0<\varepsilon_1,$ where in the last inequality we have used the simple fact
\be\la{bcbh2} \ba
 \int\rho_0|u_0|^{3}dx\leq C(\bar{\rho})\|\rho_0^{\frac{1}{2}}u_0\|_{L^{2}}^{3/2}\|\nabla u_0\|_{L^2}^{3/2}\leq C(\bar{\rho},M)C_0^{1/2}.
\ea\ee

 Now let $\varepsilon_2\triangleq\min\{\varepsilon_1,(C(\bar{\rho}, M))^{-4}\}$, and we establish \eqref{xuv1}.
\end{proof}

\begin{lemma}\la{xle5}
      If  $(\rho,u)$ is a smooth solution  of
   \eqref{a1}-\eqref{ch2}     on $\O \times (0,T] $ satisfying \eqref{zz1}, then
  \be\la{lx61} \ba
&\int_0^{T} \sigma^3\|P-P(\rho_\infty)\|_{L^4}^4 dt \le  C(\on)C_0. \ea \ee
   \end{lemma}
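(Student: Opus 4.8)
The goal is to bound $\int_0^T \sigma^3\|P-P(\rho_\infty)\|_{L^4}^4\,dt$ by $C(\bar\rho)C_0$. Since the pressure is transported (essentially) along the flow via $P_t + \mathrm{div}(Pu) + (\gamma-1)P\,\mathrm{div}\,u = 0$, the natural strategy is to derive an evolution equation for $\int \sigma^3 |P-P(\rho_\infty)|^4\,dx$ (or more precisely, a weighted quantity built from the pressure) and close it using the already-established $A_1(T), A_2(T)$ bounds from \eqref{zz1} together with the flux estimates of Lemma \ref{le3}. First I would rewrite the pressure equation in the form
\be\nonumber
(P-P(\rho_\infty))_t + u\cdot\nabla(P-P(\rho_\infty)) + \gamma(P-P(\rho_\infty))\,\mathrm{div}\,u + \gamma P(\rho_\infty)\,\mathrm{div}\,u = 0,
\ee
then multiply by $4\sigma^3(P-P(\rho_\infty))^3$ and integrate over $\Omega$. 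Using $\mathrm{div}\,u = (F + (P-P(\rho_\infty)))/(2\mu+\lambda)$ to substitute for the "bad" term, one gets after integration by parts (the convective term gives $\int \sigma^3|P-P(\rho_\infty)|^4\,\mathrm{div}\,u\,dx$, which again is expressed through $F$ and $P-P(\rho_\infty)$):
\be\nonumber
\frac{d}{dt}\int\sigma^3|P-P(\rho_\infty)|^4dx + \frac{c}{2\mu+\lambda}\int\sigma^3|P-P(\rho_\infty)|^4dx \le C\sigma^2\|P-P(\rho_\infty)\|_{L^4}^4 + C\sigma^3\int|P-P(\rho_\infty)|^3|F|dx + C\sigma^3\int|P-P(\rho_\infty)|^3|\mathrm{div}\,u|dx,
\ee
where $c>0$ comes from the $\gamma\cdot 4 - $ (correction) coefficient being positive since $\gamma>1$ (this sign is the whole point of the $L^4$ choice).

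**Key steps.** After obtaining the differential inequality, I would (i) absorb the $\int\sigma^3|P-P(\rho_\infty)|^3|\mathrm{div}\,u|$ term: bounding $\mathrm{div}\,u$ via $F$ and $P-P(\rho_\infty)$ again, the genuinely new piece is $\int\sigma^3|P-P(\rho_\infty)|^3|F|dx$, which by Hölder is $\le \|P-P(\rho_\infty)\|_{L^4}^3\|F\|_{L^4}$; (ii) estimate $\|F\|_{L^4}$ via \eqref{h20} with $p=4$, giving $\|F\|_{L^4}\le C\|\rho\dot u\|_{L^2}^{3/4}(\|\nabla u\|_{L^2} + \|P-P(\rho_\infty)\|_{L^2})^{1/4}$; (iii) use Young's inequality to split off a small multiple of $\int\sigma^3|P-P(\rho_\infty)|^4dx$ (absorbed into the dissipative term on the left) at the cost of a term like $C\sigma^3\|\rho\dot u\|_{L^2}^2(\|\nabla u\|_{L^2}^2 + \|P-P(\rho_\infty)\|_{L^2}^2)$; (iv) handle the $\sigma^2$-prefactor term by noting $|\sigma'|\le 1$ and $\sigma^2 \le \sigma^{3/2}$ on $[0,1]$ — actually more carefully one uses that $\sigma^2\|P-P(\rho_\infty)\|_{L^4}^4$ integrated in time is controlled by interpolation between $\|P-P(\rho_\infty)\|_{L^2}$ (bounded by $C C_0^{1/2}$ type quantities from \eqref{a16}, since $G(\rho)\sim |P-P(\rho_\infty)|^2$ near $\rho_\infty$ and $\rho\le 2\bar\rho$) and $\|P-P(\rho_\infty)\|_{L^\infty}\le C(\bar\rho)$; (v) apply Gronwall's inequality using $\int_0^T\sigma(\|\nabla u\|_{L^2}^2 + \|\rho\dot u\|_{L^2}^2)\,dt \lesssim C_0^{1/2}$ and the bound $\sup_t \sigma\|\nabla u\|_{L^2}^2 \lesssim C_0^{1/2}$ from $A_1(T)$, plus $A_2(T)$ for the $\sigma^3$-weighted dissipation, to close with the right-hand side being $O(C_0)$.

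**Main obstacle.** The delicate point is bookkeeping the $\sigma$-powers so that the time integration genuinely produces $C_0$ and not just $C_0^{1/2}$ or $C_0^{1/4}$. One needs the $\sigma^3$ weight (rather than $\sigma$ or $\sigma^2$) precisely so that terms like $\int_0^T\sigma^3\|\rho\dot u\|_{L^2}^2\|\nabla u\|_{L^2}^2dt$ can be estimated as $\big(\sup_t\sigma\|\nabla u\|_{L^2}^2\big)\int_0^T\sigma^2\|\rho\dot u\|_{L^2}^2dt \le C_0^{1/2}\cdot C_0^{1/2} = C_0$, exploiting $\sigma^2\le\sigma\le 1$ and $A_1, A_2$. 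The other subtlety is that near the far field $\rho \to \rho_\infty$ and the Lyapunov structure $G(\rho)$ controls $\|P-P(\rho_\infty)\|_{L^2}^2$ only after invoking $0\le\rho\le 2\bar\rho$ to get the two-sided equivalence $G(\rho)\simeq |P(\rho)-P(\rho_\infty)|^2$, so one must be careful that all constants there depend only on $\bar\rho$ (and $\gamma, a$), consistent with the claimed dependence $C(\bar\rho)$ in \eqref{lx61}. Everything else is routine Hölder/Young/Gagliardo–Nirenberg manipulation combined with the cited lemmas.
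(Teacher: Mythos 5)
There is a genuine gap, and it sits at the very center of your argument: the claimed differential inequality with a coercive term $\frac{c}{2\mu+\lambda}\int\sigma^3|P-P(\rho_\infty)|^4dx$, with $c>0$ coming from ``$4\gamma-$ correction'', does not follow from the algebra. If you multiply \eqref{lx4} by $4(P-P(\rho_\infty))^3$ and integrate by parts in the convective term, you get
$\frac{d}{dt}\int (P-P(\rho_\infty))^4dx+(4\gamma-1)\int (P-P(\rho_\infty))^4\div u\,dx+4\gamma P(\rho_\infty)\int (P-P(\rho_\infty))^3\div u\,dx=0$,
and substituting $\div u=\frac{1}{2\mu+\lambda}\bigl(F+(P-P(\rho_\infty))\bigr)$ in the $(4\gamma-1)$-term produces the \emph{quintic} $\frac{4\gamma-1}{2\mu+\lambda}\int (P-P(\rho_\infty))^5dx$, which is an odd power with no sign, not the quartic you need. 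The only sign-definite quartic contribution comes from the $4\gamma P(\rho_\infty)\div u$ piece and carries the factor $P(\rho_\infty)$; it vanishes identically when $\rho_\infty=0$, a case the theorem explicitly allows, so there is no dissipation ``because $\gamma>1$''. Nor can you rescue the quintic by $\|P-P(\rho_\infty)\|_{L^\infty}\le C(\bar\rho)$ plus Gronwall: that yields a constant growing like $e^{CT}$, while \eqref{lx61} must be uniform in $T$.

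The fix is to lower the power of the test function by one: multiply \eqref{lx4} by $3(P-P(\rho_\infty))^2$, so the evolving quantity is $\int(P-P(\rho_\infty))^3dx$ and the substitution $\div u=\frac{1}{2\mu+\lambda}(F+P-P(\rho_\infty))$ in the term $(3\gamma-1)\int(P-P(\rho_\infty))^3\div u\,dx$ produces exactly $\frac{3\gamma-1}{2\mu+\lambda}\|P-P(\rho_\infty)\|_{L^4}^4$ as the good term, for all $\rho_\infty\ge0$. The remaining pieces are then handled essentially as in the rest of your outline, which is sound: $\int(P-P(\rho_\infty))^3F\,dx$ via H\"older and \eqref{h20}, the $P(\rho_\infty)\int(P-P(\rho_\infty))^2\div u\,dx$ term via Young and \eqref{a16}, the $\sigma$-bookkeeping using $A_1,A_2$ from \eqref{zz1} (e.g.\ $\int_0^T\sigma^3\|\rho\dot u\|_{L^2}^3\|\nabla u\|_{L^2}dt\le C\,A_2^{1/2}A_1^{1/2}A_1\le CC_0$), and the time-boundary terms arising after multiplying by $\sigma^3$ and integrating, namely $\sup_t\|P-P(\rho_\infty)\|_{L^3}^3$ and $\int_0^{\sigma(T)}\|P-P(\rho_\infty)\|_{L^3}^3dt$, which are bounded by $C(\bar\rho)C_0$ through $\|P-P(\rho_\infty)\|_{L^3}^3\le\|P-P(\rho_\infty)\|_{L^\infty}\|P-P(\rho_\infty)\|_{L^2}^2$ and the equivalence $G(\rho)\simeq(P-P(\rho_\infty))^2$ for $0\le\rho\le2\bar\rho$, exactly as you anticipated.
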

\begin{proof}
It follows from $(\ref{a1})_1$ that $P-P(\rho_\infty)$ satisfies
\be\la{lx4}(P-P(\rho_\infty))_t+u\cdot\nabla (P-P(\rho_\infty))+\ga(P-P(\rho_\infty)){\rm div}u+\ga P(\rho_\infty){\div}u=0.\ee
 Multiplying (\ref{lx4}) by $3 (P-P(\rho_\infty))^2$ and integrating over $\Omega,$ after using $\div u=\frac{1}{2\mu+\lambda}(F+P-P(\rho_\infty))$ and \eqref{h20}, we get
\be \la{lx5}\ba
& \frac{3\ga-1}{2\mu+\lambda}\|P-P(\rho_\infty)\|_{L^4}^4  \\
&=-\left(\int(P-P(\rho_\infty))^3dx\right)_t-\frac{3\ga-1}{2\mu+\lambda}\int(P-P(\rho_\infty))^3Fdx\\
&\quad-3\ga P(\rho_\infty)\int (P-P(\rho_\infty))^2{\rm div}udx\\
&\le-\left(\|P-P(\rho_\infty)\|_{L^3}^3\right)_t+\delta\|P-P(\rho_\infty)\|_{L^4}^4+C(\delta)\|F\|_{L^4}^4+C(\delta)\|\nabla u\|_{L^2}^2\\
&\le-\left(\|P-P(\rho_\infty)\|_{L^3}^3\right)_t+\delta\|P-P(\rho_\infty)\|_{L^4}^4+C(\delta)\|\rho\dot{u}\|_{L^2}^3\|\nabla u\|_{L^2}\\
&\quad+C(\delta)\|\rho\dot{u}\|_{L^2}^3\|P-P(\rho_\infty)\|_{L^2}+C(\delta)\|\nabla u\|_{L^2}^2.\ea\ee
Multiplying (\ref{lx5}) by $\si^3$, then integrating over $(0,T],$ and choosing $\delta$ suitably small, by \eqref{zz1}, we obtain
\bnn \ba
&\int_0^{T} \sigma^3\|P-P(\rho_\infty)\|_{L^4}^4 dt \\
& \le C\sup_{0\le t\le T}\|P-P(\rho_\infty)\|^3_{L^3}+C\int_0^{\si(T)}\|P-P(\rho_\infty)\|^3_{L^3}dt\\
&\quad+C(\on)\int_0^{T} \sigma^3\|\rho\dot{u}\|_{L^2}^3\|\nabla u\|_{L^2}ds+C\int_0^{T}\sigma^3\|\rho\dot{u}\|_{L^2}^3\|P-P(\rho_\infty)\|_{L^2} ds +C(\on) C_0 \\
&\le C(\on) \sup_{t\in (0,T]}\left( \si^{3/2}\|\rho^{\frac{1}{2}} \dot u\|_{L^2}\right) \left(\si^{1/2}\|\nabla u\|_{L^2}\right)
\int_0^{T}\sigma\|\rho^{\frac{1}{2}}\dot{u}\|_{L^2}^2ds\\
&\quad+C\sup_{t\in (0,T]}C_0^{\frac{1}{2}}\left( \si^3\|\rho^{\frac{1}{2}} \dot u\|_{L^2}^2\right)^{\frac{1}{2}}\int_0^{T}\sigma^{\frac{1}{2}}\sigma\|\rho^{\frac{1}{2}}\dot{u}\|_{L^2}^2ds+C(\on) C_0\\
&\le  C(\on)C_0.\ea \enn
This completes the proof.
\end{proof}
\begin{lemma}\la{le5}
      There exists a positive constant
    $\ve_3 (\leq\ve_2)$ depending only on $\mu ,  \lambda ,   \ga ,  a ,  \on$, $\rho_\infty$, $\Omega$ and $M$ such that,  if  $(\rho,u)$ is a smooth solution  of
   \eqref{a1}-\eqref{ch2} on $\O \times (0,T] $ satisfying \eqref{zz1} and $\|\nabla u_0\|_{L^2}\leq M$, then
  \be\la{lx1}
  A_1(T)+A_2(T)\le C_0^{1/2},
  \ee provided $C_0\le \ve_3.$
   \end{lemma}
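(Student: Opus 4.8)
The proof builds directly on Lemma~\ref{xcrle1}: by \eqref{h14}--\eqref{h15} it is enough to show that $\int_0^T\sigma\|\nabla u\|_{L^3}^3\,dt$ and $\int_0^T\sigma^3\|\nabla u\|_{L^4}^4\,dt$ are bounded by $\delta\,(A_1(T)+A_2(T))+C(\bar\rho,M)(C_0+C_0^{3/2})$ for an arbitrarily small $\delta$. For this I would interpolate $\|\nabla u\|_{L^3}\le C\|\nabla u\|_{L^2}^{1/2}\|\nabla u\|_{L^6}^{1/2}$ and $\|\nabla u\|_{L^4}\le C\|\nabla u\|_{L^2}^{1/4}\|\nabla u\|_{L^6}^{3/4}$, then replace $\|\nabla u\|_{L^6}$ by $C(\|\rho\dot u\|_{L^2}+\|P-P(\rho_\infty)\|_{L^6}+\|\nabla u\|_{L^2})$ using \eqref{h18} with $p=6$. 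Thanks to $\rho\le 2\bar\rho$ from \eqref{zz1} (so $\|\rho\dot u\|_{L^2}\le C(\bar\rho)\|\rho^{1/2}\dot u\|_{L^2}$) and the elementary bounds $\|P-P(\rho_\infty)\|_{L^\infty}\le C(\bar\rho)$, $\|P-P(\rho_\infty)\|_{L^2}^2\le C(\bar\rho)\|G(\rho)\|_{L^1}$, $\|P-P(\rho_\infty)\|_{L^6}\le C(\bar\rho)\|P-P(\rho_\infty)\|_{L^4}^{2/3}$, the cubic integrand is dominated (up to a factor $C(\bar\rho)$) by $\sigma\|\nabla u\|_{L^2}^{3/2}\|\rho^{1/2}\dot u\|_{L^2}^{3/2}+\sigma\|\nabla u\|_{L^2}^{3/2}\|P-P(\rho_\infty)\|_{L^4}+\sigma\|\nabla u\|_{L^2}^3$, and the quartic integrand by $\sigma^3\|\nabla u\|_{L^2}\|\rho^{1/2}\dot u\|_{L^2}^3+\sigma^3\|\nabla u\|_{L^2}\|P-P(\rho_\infty)\|_{L^4}^2+\sigma^3\|\nabla u\|_{L^2}^4$.

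Each of these six model terms would then be estimated using only the ``reservoir'' bounds already available: $\int_0^T\sigma\|\rho^{1/2}\dot u\|_{L^2}^2\,dt\le A_1(T)$ and $\sigma\|\nabla u\|_{L^2}^2\le A_1(T)$; $\int_0^T\sigma^3\|\nabla\dot u\|_{L^2}^2\,dt\le A_2(T)$ and $\sigma^3\|\rho^{1/2}\dot u\|_{L^2}^2\le A_2(T)$; $\int_0^T\|\nabla u\|_{L^2}^2\,dt\le CC_0$ and $\|G(\rho)\|_{L^1}\le CC_0$ from Lemma~\ref{le2}; $\int_0^T\sigma^3\|P-P(\rho_\infty)\|_{L^4}^4\,dt\le C(\bar\rho)C_0$ from Lemma~\ref{xle5}; and, crucially on $[0,\sigma(T)]$ where $\sigma=t$ (so that no negative power of $\sigma$ is admissible near $t=0$), $\sup_{[0,\sigma(T)]}\|\nabla u\|_{L^2}^2\le C(\bar\rho,M)$, $\sup_{[0,\sigma(T)]}t\|\rho^{1/2}\dot u\|_{L^2}^2\le C(\bar\rho,M)$ and $\int_0^{\sigma(T)}\|\rho^{1/2}\dot u\|_{L^2}^2\,dt\le C(\bar\rho,M)$ from Lemma~\ref{zc1}. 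Splitting $[0,T]=[0,\sigma(T)]\cup[\sigma(T),T]$ throughout, on the first piece one factor $\|\nabla u\|_{L^2}$ is bounded by $C(\bar\rho,M)$ and the rest is paired with $\int\|\nabla u\|_{L^2}^2\le CC_0$ or with $\int\|\rho^{1/2}\dot u\|_{L^2}^2\le C(\bar\rho,M)$, while on the second piece ($\sigma\equiv1$) the pointwise bound $\|\nabla u\|_{L^2}^2\le A_1(T)$ together with \eqref{zz1} ($A_1,A_2\le 2C_0^{1/2}$, $A_3\le 2C_0^{1/4}$) turns every surplus power into a power of $C_0$; e.g. $\int_0^T\sigma\|\nabla u\|_{L^2}^3\,dt\le C(\bar\rho,M)C_0$ and $\int_0^T\sigma^3\|\nabla u\|_{L^2}^4\,dt\le A_1(T)\int_0^T\|\nabla u\|_{L^2}^2\,dt\le CC_0^{3/2}$. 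The pressure terms reduce after a Young inequality to $\int_0^T\sigma\|P-P(\rho_\infty)\|_{L^4}^4\,dt\le\int_0^1 t\|P-P(\rho_\infty)\|_{L^4}^4\,dt+\int_1^T\sigma^3\|P-P(\rho_\infty)\|_{L^4}^4\,dt\le C(\bar\rho)C_0$ (using $\sigma\le1$ and Lemma~\ref{xle5}). The $\|\rho^{1/2}\dot u\|_{L^2}^3$ term in the quartic integral is handled via $\sigma^3\|\rho^{1/2}\dot u\|_{L^2}^3=(\sigma^3\|\rho^{1/2}\dot u\|_{L^2}^2)\|\rho^{1/2}\dot u\|_{L^2}\le A_2(T)\|\rho^{1/2}\dot u\|_{L^2}$ followed by Cauchy--Schwarz, and the $\|\rho^{1/2}\dot u\|_{L^2}^{3/2}$ term in the cubic integral via Young's inequality $\|\nabla u\|_{L^2}^{3/2}\|\rho^{1/2}\dot u\|_{L^2}^{3/2}\le\delta\|\rho^{1/2}\dot u\|_{L^2}^2+C(\delta)\|\nabla u\|_{L^2}^6$, the last factor being $O(C_0^{3/2})$ after the same $[0,\sigma(T)]\cup[\sigma(T),T]$ splitting. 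Only these last Young steps produce a term $\delta A_1(T)$ (resp. $\delta A_2(T)$), to be absorbed into the left sides of \eqref{h14}, \eqref{h15}.

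Assembling the estimates, choosing $\delta$ small and absorbing, \eqref{h14} gives $A_1(T)\le C(\bar\rho,M)(C_0+C_0^{3/2})$; then, since \eqref{h15} already carries the term $CA_1(T)$, $A_2(T)\le C(\bar\rho,M)(C_0+C_0^{3/2})$ as well, whence $A_1(T)+A_2(T)\le\tilde C(C_0+C_0^{3/2})$ with $\tilde C$ depending only on $\mu,\lambda,\gamma,a,\bar\rho,\rho_\infty,\Omega,M$; this is $\le C_0^{1/2}$ once $C_0\le\ve_3$ for a suitable $\ve_3\le\ve_2$, proving \eqref{lx1}. The step requiring the most care is the bookkeeping of the weight $\sigma$: every term must be grouped so that (i) no factor $\sigma^{-a}$ with $a>0$ survives as $t\to0$ — this is precisely why the $[0,\sigma(T)]$-bounds of Lemma~\ref{zc1} must be used there rather than the pointwise bound $\sigma\|\nabla u\|_{L^2}^2\le A_1(T)$ — (ii) every term remains integrable as $T\to\infty$, which is ensured by the $T$-uniform reservoir bounds, and (iii) each non-absorbable remainder carries a power $C_0^{q}$ with $q>1/2$ (or a freely small coefficient), so that the bootstrap \eqref{zz1}$\Rightarrow$\eqref{zz2} closes.
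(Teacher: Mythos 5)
Your proposal is correct and follows essentially the same route as the paper: starting from \eqref{h14}--\eqref{h15}, the two integrals $\int_0^T\sigma\|\nabla u\|_{L^3}^3dt$ and $\int_0^T\sigma^3\|\nabla u\|_{L^4}^4dt$ are controlled via the interpolation estimates of Lemma \ref{le3}, the energy bound of Lemma \ref{le2}, the pressure bound of Lemma \ref{xle5}, the short-time bounds of Lemma \ref{zc1} on $[0,\sigma(T)]$, and the bootstrap hypothesis \eqref{zz1}, exactly as in the paper's \eqref{lx7}--\eqref{lx9}, with the same final smallness choice of $\ve_3$. The only differences are cosmetic and harmless: you use Young's inequality with absorption of $\delta A_1(T)$ (and an $A_2\cdot$Cauchy--Schwarz grouping for the $\|\rho^{1/2}\dot u\|_{L^2}^3$ term) where the paper uses direct H\"older groupings giving $C(\on,M)C_0^{5/8}$ and $C(\on)A_2^{1/2}A_1^{3/2}$, and your remainder $\int_0^T\sigma\|\nabla u\|_{L^2}^6dt$ is actually only $O(C_0)$ rather than $O(C_0^{3/2})$ (since $\sup_{[0,\sigma(T)]}\|\nabla u\|_{L^2}\le C(\on,M)$ is not small), which still closes the argument.
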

\begin{proof} By Lemma \ref{xcrle1}, it indicates that
\be\la{lx11}\ba
A_1(T)+A_2(T)\le C(\on)C_0 + C(\on)\int_0^{T}     \sigma^3\|\nabla
u\|_{L^4}^4 ds+C(\on)\int_0^{T} \sigma\|\nabla u\|_{L^3}^3ds. \ea\ee
Due to \eqref{h18}, \eqref{h20}, \eqref{zh20} and Lemma \ref{xle5},
\be\la{lx7} \ba
&  \int_0^{T} \sigma^3\|\nabla u\|_{L^4}^4 ds\\
& \le C\int_0^{T}\sigma^3\|\nabla u\|_{L^2}\|\rho\dot{u}\|_{L^2}^3ds+C\int_0^{T}\sigma^3\|\nabla u\|_{L^2}^2ds\\
&\quad+C\int_0^{T}\sigma^3\|P-P(\rho_\infty)\|_{L^6}^6 ds
+C\int_0^{T}\sigma^3\|\nabla u\|_{L^2}^4ds\\
& \le C(\on) \sup_{t\in (0,T]}\left( \si^{3/2}\|\rho^{\frac{1}{2}} \dot u\|_{L^2}\right) \left(\si^{1/2}\|\nabla u\|_{L^2}\right)
\int_0^{T}\int\sigma\rho|\dot{u}|^2dxds\\
&\quad+C(\on)\int_0^{T}\sigma^3\|P-P(\rho_\infty)\|_{L^4}^4 ds+C\int_0^{T}(\sigma\|\nabla u\|_{L^2}^2)\sigma^2\|\nabla u\|_{L^2}^2ds\\
&\le C(\on) A_2^{1/2}(T)A_1^{1/2}(T)A_1(T)+CC_0\\
&\le C(\on)C_0.  \ea \ee

Finally, we shall give the estimate of  the last term on the right hand side of
(\ref{lx11}). By (\ref{lx7}), it is easy to check that
\be\la{lx8} \ba
&\int_{\si(T)}^{T}\int\sigma|\nabla u|^3dxds \le
\int_{\si(T)}^{T}\int( |\nabla u|^4 + |\nabla u|^2)dxds  \le  C C_0.
\ea \ee Next, we deduce from (\ref{h18}), (\ref{uv1}) and
(\ref{zz1})  that if $C_0\le
\ve_1$, then
\be\la{lx9} \ba
& \int_0^{\si(T)} \sigma\|\nabla u\|_{L^3}^3 dt\\
& \le  C(\on)\int_0^{\si(T)}t  \|\nabla u\|_{L^2}^{3/2} \left(\|\rho\dot{u}\|^{3/2}_{L^2}+C_0^{1/4}\right)dt+C\int_0^{\si(T)}\sigma\|\nabla u\|_{L^2}^3 dt\\
& \le C(\on)\int_0^{\si(T)}\left( \|\nabla u\|_{L^2}\right)\|\nabla u\|_{L^2}^{1/2}   \left(t\int\rho|\dot{u}|^2dx\right)^{3/4}dt+ C(\on)C_0
\\  & \le C(\on)\sup_{t\in (0,\si(T)]} (\|\nabla u\|_{L^2})     \int_0^{\si(T)}  \|\nabla u\|_{L^2}^{1/2}
\left(t \int\rho|\dot{u}|^2dx\right)^{3/4}dt +C(\on)C_0\\
&\le C(\bar \n,M )A_1^{3/4}C_0^{1/4}  +
C(\on)C_0\\
&\le C(\bar \n,M) C_0^{5/8}. \ea \ee
Taking
\bnn
\ve_3\triangleq\min\left\{\ve_2,\left(C(\bar \n,M)\right)^{-8}
\right\},\enn
By (\ref{lx11}) and (\ref{lx7})-(\ref{lx9}), we immediately obtain
(\ref{lx1}) in the case $C_0<\ve_3$.
\end{proof}

It is time to derive a uniform (in time) upper bound for the
density, which plays a crucial role in deriving all the higher
order estimates and thus extending the local classical solution to be a global one.
The method we are going to employ can be found in \cite{lx} \cite{jx01}.
\begin{lemma}\la{le7} Let $(\rho,u)$ be a smooth solution  of
(\ref{a1})--(\ref{ch2}) on $\Omega\times (0,T] $
satisfying (\ref{zz1}) and $\|\nabla u_0\|_{L^2}\leq M$, then
there exists a positive constant
   $\ve$ as described in Theorem \ref{th1} depending only on $\mu ,  \lambda ,   \ga ,  a ,  \on, \rho_\infty$, $\Omega$ and $M$ such that,
      \bnn\sup_{0\le t\le T}\|\n(t)\|_{L^\infty}  \le
\frac{7\bar \n }{4}  ,\enn
      provided $C_0\le \ve . $

   \end{lemma}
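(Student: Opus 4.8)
The plan is to use the transport equation $(\ref{a1})_1$ along the effective‑viscous‑flux formulation and apply Zlotnik's inequality (Lemma \ref{le1}). Writing the continuity equation in Lagrangian form along particle paths, $\frac{d}{dt}\rho = -\rho\,\div u$, and using the definition $F=(2\mu+\lambda)\div u-(P-P(\rho_\infty))$ to substitute $\div u = \frac{1}{2\mu+\lambda}(F+P-P(\rho_\infty))$, we obtain an ODE of the form
\be\nonumber
\frac{d}{dt}\rho = g(\rho) + b'(t),
\ee
where $g(\rho) = -\frac{a\rho}{2\mu+\lambda}(\rho^\gamma-\rho_\infty^\gamma)$ (which satisfies $g(+\infty)=-\infty$ and, crucially, $g(\zeta)\le -N_1$ for $\zeta$ beyond some threshold) and $b'(t) = -\frac{\rho}{2\mu+\lambda}F$, so $b(t) = -\frac{1}{2\mu+\lambda}\int_0^t \rho F\,ds$ along the trajectory. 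The point of isolating $F$ rather than $\div u$ is that $F$ has the good elliptic bound $\|\nabla F\|_{L^q}\le C\|\rho\dot u\|_{L^q}$ from Lemma \ref{le3}.

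The heart of the argument is verifying the hypothesis (\ref{a100}) of Zlotnik's lemma, i.e.\ $b(t_2)-b(t_1)\le N_0 + N_1(t_2-t_1)$ with $N_0$ small (controlled by a power of $C_0$) and $N_1$ a fixed constant independent of $T$. I would split the time interval at $\sigma(T)$. On $[\sigma(T),T]$ (the ``large time'' regime where $\sigma\equiv 1$), I estimate $\int_{t_1}^{t_2}\rho|F|\,ds \le C\int \|F\|_{L^\infty}\,ds$ using the Gagliardo–Nirenberg bound $\|F\|_{L^\infty}\le C\|F\|_{L^6}^{1/2}\|\nabla F\|_{L^6}^{1/2}$ together with (\ref{h20}) and (\ref{h19}), and then absorb the resulting integrals of $\|\rho\dot u\|_{L^2}$, $\|\nabla u\|_{L^2}$, $\|P-P(\rho_\infty)\|$ using the bounds $A_1(T)+A_2(T)\le C_0^{1/2}$ from Lemma \ref{le5} and the energy bound (\ref{a16}); Young's inequality converts these into a term of the form $N_0 + N_1(t_2-t_1)$ with $N_1$ the (fixed, $C_0$‑independent) coefficient coming from the $L^\infty$ bound on $P-P(\rho_\infty)$, i.e.\ essentially $N_1 \sim \frac{a}{2\mu+\lambda}((2\bar\rho)^\gamma - \rho_\infty^\gamma)$ — and this is precisely balanced against the decay of $g$ at large $\rho$. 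On $[0,\sigma(T)]$ (the ``short time'' regime), I use $\sigma$-weighted bounds: here $N_1$ can be taken as large as needed since $t_2-t_1\le 1$, so I just need $b(t_2)-b(t_1)\le N_0$ with $N_0$ small, which follows from Lemma \ref{zc1} (the bounds (\ref{uv1})–(\ref{uv2})) after a Cauchy–Schwarz/Hölder estimate in time, picking up the small factor $C_0^{1/4}$ or similar from $A_3(\sigma(T))\le C_0^{1/4}$ and the energy.

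With (\ref{a100}) verified with $N_0\le C(\bar\rho,M)\,C_0^{\alpha}$ for some $\alpha>0$ and $N_1$ fixed, Zlotnik's lemma (Lemma \ref{le1}) yields $\rho(x,t)\le \max\{\bar\rho,\overline{\zeta}\} + N_0$ along every particle path, where $\overline{\zeta}$ is the threshold with $g(\zeta)\le -N_1$ for $\zeta\ge\overline\zeta$. The delicate point — the main obstacle — is that this bound must come out strictly below $\tfrac{7\bar\rho}{4}$: one must choose the constants so that $\overline{\zeta}\le \tfrac{3\bar\rho}{2}$ (using $\bar\rho\ge\rho_\infty+1$, which forces $g$ to be genuinely negative and of the right size by the time $\rho$ reaches $\tfrac{3\bar\rho}{2}$, after fixing $N_1$ from the large‑time estimate) and then take $\ve$ small enough that $N_0\le C(\bar\rho,M)\,\ve^\alpha \le \tfrac{\bar\rho}{4}$. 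Since $\rho_0\le\bar\rho$, this gives $\sup_{[0,T]}\|\rho\|_{L^\infty}\le \tfrac{3\bar\rho}{2}+\tfrac{\bar\rho}{4}=\tfrac{7\bar\rho}{4}$, as claimed, with $\ve\le\ve_3$ so that all the a priori estimates invoked remain available. The only real subtlety beyond bookkeeping is making sure the constant $N_1$ extracted on $[\sigma(T),T]$ genuinely depends only on $\mu,\lambda,\gamma,a,\bar\rho,\rho_\infty$ (not on $T$ or $C_0$), which is why the effective viscous flux — whose gradient is controlled by $\rho\dot u$ with $T$-independent integral bounds — is used in place of $\div u$.
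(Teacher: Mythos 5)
Your route is the paper's route: write $(\ref{a1})_1$ along particle paths as $D_t\rho=g(\rho)+b'(t)$ with $g(\rho)=-\frac{a\rho}{2\mu+\lambda}(\rho^{\gamma}-\rho_\infty^{\gamma})$ and $b'=-\frac{\rho F}{2\mu+\lambda}$, verify the hypothesis (\ref{a100}) of Zlotnik's Lemma \ref{le1}, splitting time at $\sigma(T)$ (on $[0,\sigma(T)]$: $N_1=0$ and $N_0$ small via the time-weighted bounds (\ref{uv1})--(\ref{uv2}) — this is exactly what the paper does, yielding $N_0\le C(\bar\rho,M)C_0^{3/32}$; the smallness comes from $A_1$, not from $A_3$, but that is a minor slip). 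The genuine problem is in your large-time step: you take $N_1\sim\frac{a}{2\mu+\lambda}\bigl((2\bar\rho)^{\gamma}-\rho_\infty^{\gamma}\bigr)$, attributed to an $L^\infty$ bound on $P-P(\rho_\infty)$, and claim this is ``precisely balanced'' by the decay of $g$ so that the Zlotnik threshold satisfies $\overline\zeta\le\frac{3\bar\rho}{2}$. That balance is false in general. The condition $g(\zeta)\le -N_1$ for $\zeta\ge\overline\zeta$ reads $\zeta(\zeta^{\gamma}-\rho_\infty^{\gamma})\ge(2\bar\rho)^{\gamma}-\rho_\infty^{\gamma}$; with $\rho_\infty=0$ and $\bar\rho=1$ (admissible, since only $\bar\rho\ge\rho_\infty+1$ is assumed) this forces $\overline\zeta\ge 2^{\gamma/(\gamma+1)}$, which exceeds $\frac{3\bar\rho}{2}$ already for $\gamma\gtrsim 1.4$, and exceeds $\frac{7\bar\rho}{4}$ for $\gamma\gtrsim 4.2$ — so with your constants the scheme does not deliver the bound $\frac{7\bar\rho}{4}$ at all for large $\gamma$. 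Moreover, $b'$ only involves $\rho F$, so no $L^\infty$ bound of the pressure enters the estimate of $b$; the pressure has already been placed inside $g$, and it cannot simultaneously serve as damping and be charged to the drift.

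The repair is the paper's actual choice of constants, and it is essential, not bookkeeping: $N_1$ is not dictated by any pressure bound — it is a free parameter produced by Young's inequality. On $[\sigma(T),T]$ one writes $C\|F\|_{L^\infty}\le \frac{a}{2\mu+\lambda}+C\|F\|_{L^\infty}^{4}$ and controls $\int_{\sigma(T)}^{T}\|F\|_{L^\infty}^{4}dt\le C\int_{\sigma(T)}^{T}\|F\|_{L^6}^{2}\|\nabla F\|_{L^6}^{2}dt\le C\int_{\sigma(T)}^{T}\|\rho^{1/2}\dot u\|_{L^2}^{2}\|\nabla\dot u\|_{L^2}^{2}dt\le C A_2^{2}(T)\le CC_0$ using (\ref{h19}), (\ref{h20}) and (\ref{zz1}); thus $N_1=\frac{a}{2\mu+\lambda}$ (fixed, small) and $N_0=C(\bar\rho)C_0$. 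Since $(\rho_\infty+1)^{\gamma}\ge\rho_\infty^{\gamma}+1$, one has $g(\zeta)\le-\frac{a}{2\mu+\lambda}=-N_1$ already for $\zeta\ge\rho_\infty+1\le\bar\rho$, so $\overline\zeta=\rho_\infty+1$, and Zlotnik's lemma (started at $t=\sigma(T)$ with the stage-one bound $\frac{3\bar\rho}{2}$) gives $\rho\le\max\{\frac{3\bar\rho}{2},\rho_\infty+1\}+C(\bar\rho)C_0\le\frac{7\bar\rho}{4}$ once $C_0\le\ve$. So the structure of your argument is sound and coincides with the paper's, but the step fixing $N_1$ and $\overline\zeta$ must be replaced as above; as written it would fail for admissible data with $\gamma$ not close to $1$.
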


\begin{proof} Denote $$
D_t\n\triangleq\n_t+u \cdot\nabla \n ,\quad
g(\n)\triangleq-\frac{\rho(P-P(\rho_s))}{2\mu+\lambda}  ,
\quad b(t)\triangleq-\frac{1}{2\mu+\lambda} \int_0^t\n Fdt, $$
then $(\ref{a1})_1$ can be rewritten as
 \be \la{z.3} D_t \n=g(\n)+b'(t). \ee
In order to finish the proof, by Lemma \ref{le1}, it is sufficient to check that the function $b(t)$ must verify \eqref{a100} with some suitable constants $N_0$, $N_1$.

Let $\ve_3$ be given in Lemma \ref{le5}. One deduces from Gagliardo-Nirenberg's inequality, \eqref{zz1}, \eqref{uv1} and \eqref{uv2} that  for all $ 0\le t_1<t_2\le \si(T),$ if $C_0\le\ve_3,$ then
\be\ba &|b(t_2)-b(t_1)|\\&\le C\int_0^{\si(T)}\|(\n
F)(\cdot,t)\|_{L^\infty}dt\\ &\le C(\on)
\int_0^{\si(T)}\|F(\cdot,t)\|^{1/2}_{L^6}\|\na
F(\cdot,t)\|^{1/2}_{L^6}dt\no&\le  C(\on) \int_0^{\si(T)}
\|\n \dot u\|^{1/2}_{L^2} \|\na\dot u\|^{1/2}_{L^2}dt \no&\le
C(\on)\int_0^{\si(T)}t^{-\frac{5}{16}}( t\|\n\dot
u\|_{L^2}^2)^{\frac{1}{16} }\|\n\dot
u\|_{L^2}^{\frac{3}{8}}\left(t\|\na\dot u\|_{L^2}^2\right)^{\frac14}dt
\no&\le
C(\on,M)\left(\int_0^{\si(T)}t^{-\frac{2}{3}}
 \left(t\|\n^{1/2}\dot
u\|_{L^2}^2\right)^{\frac{1}{4}} dt\right)^{3/4} \no&\le C(\on,M)
(A_1(\si(T)))^{\frac{3}{16}}\no&\le C(\on,M)C_0^{\frac{3}{32}}.\ea\ee
Therefore, for $ t\in [0,\si(T)],$ one can
choose $N_0$ and $N_1$ in (\ref{a100}) as follows:\bnn N_1=0, \quad
N_0=C(\on,M)C_0^{\frac{3}{32}}, \enn and $\bar\zeta=\rho_\infty$ in
(\ref{a101}). It is easy to check that
$$ g(\zeta)=-\frac{a\zeta}{2\mu+\lambda}(\zeta^{\ga}-\rho_\infty^{\ga})
\le -N_1=0, \quad \mbox{for all}\quad { \zeta}\ge \bar\zeta= \rho_\infty.$$
So Lemma \ref{le1} yields that \be\la{lx20}\sup_{t\in
[0,\si(T)]}\|\rho\|_{L^\infty}\le \max\{  \bar\n ,\rho_\infty\}+N_0\le \on
+C(\on,M)C_0^{\frac{3}{32}}\le\frac{3 \bar\n  }{2},\ee
 provided $$C_0\le\min\{\ve_3,\ve_4\},\quad\mbox{ for }
 \ve_4\triangleq\left(\frac{\bar \n }{2C(\on,M) }\right)^{\frac{32}{3}}.$$

On the other hand, by Gagliardo-Nirenberg's inequality, \eqref{zz1} and \eqref{h19},
 for all $\si(T)\le t_1\le t_2\le T,$ \be\ba  |b(t_2)-b(t_1)|
 &\le  C(\on) \int_{t_1}^{t_2}\|F(\cdot,t)\|_{L^\infty}dt \no& \le
\frac{a}{2\mu+\lambda}(t_2-t_1)+C(\on)
\int_{\si(T)}^{T}\|F(\cdot,t)\|^{4}_{L^\infty}dt\no & \le \frac{
a}{2\mu+\lambda}(t_2-t_1)+C(\on)
\int_{\si(T)}^T\|F(\cdot,t)\|^{2}_{L^6}\|\nabla
F(\cdot,t)\|^{2}_{L^6}dt \no& \le \frac{a}{2\mu+\lambda}(t_2-t_1)+ C
(\bar{\rho})\int_{\si(T)}^T \|\rho^{\frac{1}{2}}\dot{u}\|^{2}_{L^2}\|\na \dot
u(\cdot,t)\|^{2}_{L^2}dt\no& \le \frac{a}{2\mu+\lambda}(t_2-t_1)+ C
(\bar\n)C_0,\ea\ee
 Therefore, one can choose $N_1$ and $N_0$ in (\ref{a100}) as: \bnn
N_1=\frac{a}{2\mu+\lambda}, \quad N_0=C(\on)C_0.\enn Note that
$$ g(\zeta)=-\frac{a\zeta}{2\mu+\lambda}(\zeta^{\ga}-\rho_\infty^\ga)\le -N_1
=-\frac{ a}{2\mu+\lambda}, \quad \mbox{for all}\quad { \zeta}\ge
\rho_\infty+1.$$ So one can set $\bar\zeta=\rho_\infty+1 $ in (\ref{a101}). By Lemma
\ref{le1} and (\ref{lx20}), we get
\be\la{lx21}
\sup_{t\in[\si(T),T]}\|\rho\|_{L^\infty}\le \max\left\{ \frac{ 3\bar \n
}{2},\rho_\infty+1\right\}+N_0\le \frac{3\bar \n }{2 } +C(\on)C_0\le
\frac{7\bar \n }{4} ,\ee provided
\be\la{lx30} C_0\le
\ve\triangleq\min\{\ve_3,\ve_4,\ve_5\}, \quad\mbox{ for
}\ve_5\triangleq \frac{\bar{\rho}}{4C(\bar{\rho})}.\ee
Combining  (\ref{lx20}) and (\ref{lx21}), we finish the
proof of Lemma \ref{le7}.
\end{proof}
\section{\la{se5} A priori estimates (II): higher order estimates }

From now on, we assume that the initial energy $C_0$  always satisfies  (\ref{lx30}) and the positive
constant $C $ may depend on \bnn  T,\,\, \| g\|_{L^2},  \,\,\|\na u_0\|_{H^1},\,\,
    \|\n_0-\rho_\infty\|_{H^2\cap W^{2,q}}  ,  \,\, \|P(\n_0)-P(\rho_\infty)\|_{H^2\cap W^{2,q}} , \,\,\enn
besides $\mu$, $\lambda$, $\rho_\infty$, $a$, $\ga$, $\on,$  $\Omega$ and $
M,$ where $g\in L^2(\Omega)$ is given by the compatibility condition
\eqref{dt3}.
This section is devoted to some necessary higher order estimates of
the smooth solution $(\rho,u)$ which will make sure that the local existence of classical solution can be extended globally in time. The methods used are mainly from \cite{jx01}. For completeness, we still write it down in detail.
\begin{lemma}\la{xle1}
 There exists a positive constant $C,$ such that
\be\label{cxb2}
\sup_{0\le t\le T}\|\rho^{\frac{1}{2}}\dot{u}\|_{L^2}+\int_0^T\|\nabla\dot{u}\|_{L^2}^{2}dt\leq C,\ee
\be\label{cxb3}
\sup_{0\le t\le T}(\|\nabla\rho\|_{{L^2}\cap{L^6}}+\|\nabla u\|_{H^1})+\int_0^T\|\nabla u\|_{L^\infty}dt\leq C.\ee

\end{lemma}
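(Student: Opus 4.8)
The plan is to upgrade the $\si$-weighted a priori bounds of Proposition~\ref{pr1} and Lemma~\ref{zc1} to estimates that are uniform on the whole fixed interval $[0,T]$; the genuinely new input is the compatibility condition \eqref{dt3}, which controls $\rho^{1/2}\dot u$ at the initial time, and, for the density gradient, a logarithmic (Beale--Kato--Majda) Gronwall argument. For \eqref{cxb2}, evaluating $(\ref{a1})_2$ at $t=0$ and using \eqref{dt3} gives $\rho_0\dot u(\cdot,0)=-\rho_0^{1/2}g$, hence $\|\rho_0^{1/2}\dot u(\cdot,0)\|_{L^2}\le\|g\|_{L^2}\le C$. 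I then take $m=0$ in \eqref{ax401}, use \eqref{tb11} to recover a multiple of $\|\na\dot u\|_{L^2}^2$ on the left-hand side, and move the boundary term $2\int_{\partial\Omega}(u\cdot\na n\cdot u)F\,ds$—which by \eqref{ljq05} and \eqref{h19} has absolute value $\le\frac12\|\rho^{1/2}\dot u\|_{L^2}^2+C$—to the left. Integrating in time and setting $\Psi(t):=\sup_{0\le s\le t}\|\rho^{1/2}\dot u\|_{L^2}^2+\int_0^t\|\na\dot u\|_{L^2}^2\,ds$, I arrive at
\[
\Psi(t)\le C+C\int_0^t\|\rho^{1/2}\dot u\|_{L^2}^2\big(\|\na u\|_{L^2}^4+1\big)\,ds+C\int_0^t\big(\|\na u\|_{L^2}^2+\|\na u\|_{L^2}^6+\|\na u\|_{L^4}^4\big)\,ds .
\]
Since $\|\na u\|_{L^2}\le C$ on $[0,T]$ (Lemma~\ref{zc1} and Proposition~\ref{pr1}) and $\int_0^T\|\rho^{1/2}\dot u\|_{L^2}^2\,dt\le C$ (by \eqref{uv1} together with the bound on $A_1(T)$), the first two integrals are bounded; using \eqref{h18} with $p=4$ and $\rho\le2\bar\rho$ one has $\|\na u\|_{L^4}^4\le C\|\rho^{1/2}\dot u\|_{L^2}^3+C$, so $\int_0^t\|\na u\|_{L^4}^4\,ds\le C\big(\sup_{0\le s\le t}\|\rho^{1/2}\dot u\|_{L^2}\big)\int_0^T\|\rho^{1/2}\dot u\|_{L^2}^2\,ds+C\le C\Psi(t)^{1/2}+C$. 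Thus $\Psi(t)\le C+C\Psi(t)^{1/2}$, which gives \eqref{cxb2}.

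For \eqref{cxb3}, differentiating $(\ref{a1})_1$, testing against $|\na\rho|^{p-2}\na\rho$ and integrating (the boundary contribution vanishes since $u\cdot n|_{\partial\Omega}=0$) yields, for $p=2$ and $p=6$,
\[
\frac{d}{dt}\|\na\rho\|_{L^p}\le C\big(1+\|\na u\|_{L^\infty}\big)\|\na\rho\|_{L^p}+C\|\na^2u\|_{L^p}.
\]
Writing $(\ref{a1})_2$ via $F$ and $\curl u$ and invoking Lemma~\ref{crle5} with \eqref{h19} and \eqref{zh19} gives $\|\na^2u\|_{L^p}\le C(\|\rho\dot u\|_{L^p}+\|\na\rho\|_{L^p}+\|\na u\|_{L^2}+\|\rho\dot u\|_{L^2})$; interpolating $\|\rho\dot u\|_{L^p}$ between $L^2$ and $L^6$, using $\|\rho\dot u\|_{L^6}\le C\|\na\dot u\|_{L^2}$ and \eqref{cxb2}, this becomes $\|\na^2u\|_{L^p}\le C(\|\na\dot u\|_{L^2}+\|\na\rho\|_{L^p}+1)$. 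Combining the Beale--Kato--Majda inequality (Lemma~\ref{le9}) with the estimate $\|\div u\|_{L^\infty}+\|\curl u\|_{L^\infty}\le C(\|\na\dot u\|_{L^2}^{1/2}+1)$, obtained from Gagliardo--Nirenberg's inequality and Lemma~\ref{le3}, and with $\|\na^2u\|_{L^q}\le C(\|\na\dot u\|_{L^2}+\|\na\rho\|_{L^6}+1)$, one gets
\[
\|\na u\|_{L^\infty}\le C\big(1+\|\na\dot u\|_{L^2}^{1/2}\big)\log\!\big(e+\|\na\dot u\|_{L^2}+\|\na\rho\|_{L^6}\big)+C .
\]
Inserting this in the $p=6$ inequality, the function $e+\|\na\rho\|_{L^6}$ satisfies a logarithmic Gronwall inequality whose coefficient $C(1+\|\na\dot u\|_{L^2}^{1/2})$ and inhomogeneity lie in $L^1(0,T)$—by \eqref{cxb2} and Cauchy--Schwarz—so Gronwall's lemma forces $\|\na\rho\|_{L^6}\le C$ on $[0,T]$; consequently $\int_0^T\|\na u\|_{L^\infty}\,dt\le C$, the $p=2$ inequality then gives $\|\na\rho\|_{L^2}\le C$, and the elliptic bound yields $\|\na^2u\|_{L^2}\le C$, which completes \eqref{cxb3}.

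The main obstacle is the term $\int_0^t\|\na u\|_{L^4}^4\,ds$ in the first part: it cannot be absorbed by lower-order quantities, and handling it uses both the compatibility condition (to make $\Psi(0)$ finite) and the self-improving bound $\int_0^t\|\na u\|_{L^4}^4\,ds\le C\Psi(t)^{1/2}+C$, which itself rests on the already-proven time-integral bound for $\|\rho^{1/2}\dot u\|_{L^2}^2$. In the second part the delicate point is that Lemma~\ref{le9} only yields logarithmic—not polynomial—control of $\|\na u\|_{L^\infty}$; it is precisely the $L^2(0,T)$ bound for $\|\na\dot u\|_{L^2}$ coming from \eqref{cxb2} that makes the resulting (a priori double-exponential) Gronwall estimate finite over the fixed time $T$.
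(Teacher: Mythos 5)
Your proposal is correct and follows essentially the same route as the paper: the compatibility condition \eqref{dt3} gives $\|\rho^{1/2}\dot u(\cdot,0)\|_{L^2}\le\|g\|_{L^2}$, the $m=0$ case of \eqref{ax401} together with \eqref{tb11}, \eqref{ljq11} and the time-integrability of $\|\rho^{1/2}\dot u\|_{L^2}^2$ from \eqref{uv1} and $A_1(T)$ yields \eqref{cxb2}, and then the transport estimate for $\|\nabla\rho\|_{L^p}$, the effective-viscous-flux elliptic bounds and the Beale--Kato--Majda logarithmic Gronwall argument give \eqref{cxb3}. The only deviations (absorbing $\int_0^t\|\nabla u\|_{L^4}^4\,ds$ via $\Psi\le C+C\Psi^{1/2}$ instead of Gronwall, and the exponent $1/2$ rather than $1$ in the $L^\infty$ bound for $\div u$ and $\curl u$) are cosmetic and do not change the argument.
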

\begin{proof} First, by (\ref{uv1}) and
(\ref{lx1}), we obtain
     \be\la{cxb4}
  \sup_{t\in[0,T]}\|\nabla u\|_{L^2}^2 + \int_0^{T}\int\rho|\dot{u}|^2dxdt
  \le C .
  \ee
Choosing $m=0$ in \eqref{ax401}, we deduce from \eqref{h18}, \eqref{a16} and \eqref{cxb4} that
\be\la{cxb5} \ba
\left(\|\rho^{\frac{1}{2}}\dot{u}\|_{L^2}^2\right)_t+\|\nabla\dot{u}\|_{L^2}^2
\le C(\|\rho^{\frac{1}{2}}\dot{u}\|_{L^2}^2+1)\|\rho^{\frac{1}{2}}\dot{u}\|_{L^2}^2+C-\left(\int_{\partial\Omega}(u\cdot\nabla n\cdot u)Fds\right)_t.\ea\ee
As a result, by Gronwall's inequality, together with the compatibility condition \eqref{dt3}, \eqref{cxb4}, \eqref{cxb5} and \eqref{ljq11}, we give \eqref{cxb2}.

Observe that for $2\leq p\leq 6 ,$ $|\nabla\rho|^p$ satisfies \bnnn \ba
& (|\nabla\rho|^p)_t + \text{div}(|\nabla\rho|^pu)+ (p-1)|\nabla\rho|^p\text{div}u  \\
 &+ p|\nabla\rho|^{p-2}(\nabla\rho)^{\ast} \nabla u (\nabla\rho) +
p\rho|\nabla\rho|^{p-2}\nabla\rho\cdot\nabla\text{div}u = 0.\ea
\ennn
Thus, due to \eqref{h19},
\be\la{cxb9}\ba
(\|\nabla\rho\|_{L^p})_t&\le C(1+\norm[L^{\infty}]{\nabla u} )\norm[L^p]{\nabla\rho} +C\|\na F\|_{L^p}\\
&\le C(1+\norm[L^{\infty}]{\nabla u} )\norm[L^p]{\nabla\rho}+C\|\rho\dot{u}\|_{L^p}. \ea\ee

On the other hand, by Gagliardo-Nirenberg's inequality, we deduce from \eqref{a16}, \eqref{cxb4}, \eqref{h19} and \eqref{zh19} that
\be\la{cxb11}\ba
&\|\div u\|_{L^\infty}+\|\curl u\|_{L^\infty}\\
&\le C(\|F\|_{L^\infty}+\|P-P(\rho_\infty)\|_{L^\infty})+\|\curl u\|_{L^\infty} \\
&\le C(\|F\|_{L^2}+\|\nabla F\|_{L^6}+\|\curl u\|_{L^2}+\|\nabla \curl u\|_{L^6}+\|P-P(\rho_\infty)\|_{L^\infty}) \\
&\le C(\|\nabla u\|_{L^2}+\|P-P(\rho_\infty)\|_{L^2}+\|\rho\dot{u}\|_{L^6}+\|P-P(\rho_\infty)\|_{L^\infty}) \\
&\le C(\bar{\rho})(\|\nabla\dot{u}\|_{L^2}+1).
\ea\ee Moreover, by Lemma \ref{crle5} and \eqref{x266}-\eqref{x268}, we have for $p\in[2,6]$,
\be\la{remark1}\ba
\|\nabla^{2}u\|_{L^p}&\leq C(\|\div u\|_{W^{1,p}}+\|\curl u\|_{W^{1,p}}+\|\nabla u\|_{L^2})\\
&\leq C(\|\rho\dot{u}\|_{L^p}+\|\rho\dot{u}\|_{L^2}+\|P-P(\rho_\infty)\|_{W^{1,p}})\\& \quad+C(\|\nabla u\|_{L^2}+\|P-P(\rho_\infty)\|_{L^2}),
\ea\ee
which, along with Lemma \ref{le9}, \eqref{cxb2}, and \eqref{cxb11} yields
\be\la{cxb12}\ba
\|\na u\|_{L^\infty } &\le C\left(\|{\rm div}u\|_{L^\infty }+
\|\curl u\|_{L^\infty } \right)\log(e+\|\na^2 u\|_{L^6 }) +C\|\na
u\|_{L^2} +C \\
&\le C(1+\|\nabla\dot{u}\|_{L^2})\log(e+\|\nabla\dot u\|_{L^2 } +\|\na \rho\|_{L^6}) \\
&\le C(1+\|\nabla\dot{u}\|_{L^2})\left[\log(e+\|\nabla\dot u\|_{L^2 }) +\log(e+\|\nabla\rho\|_{L^6})\right] \\
&\le C(1+\|\nabla\dot{u}\|_{L^2}^2)+C(1+\|\nabla\dot{u}\|_{L^2})\log(e+\|\nabla\rho\|_{L^6}) .
\ea\ee
Hence,
\be\la{cxb14}\ba
\left(\log(e+\|\nabla\rho\|_{L^6})\right)_t\leq C(\bar{\rho})(1+\|\nabla\dot{u}\|_{L^2}^{2})+C(\bar{\rho})(1+\|\nabla\dot{u}\|_{L^2})\log(e+\|\nabla\rho\|_{L^6}).
\ea\ee
And then by Gronwall's inequality and \eqref{cxb2}, we find that
\be\la{cxb15}\ba
\sup_{0\leq t\leq T}\|\nabla\rho\|_{L^{6}}\leq C .
\ea\ee
Moreover, by \eqref{cxb12},
\be\la{cxb160}\ba
\int_0^T\|\nabla u\|_{L^\infty}dt\leq C.
\ea\ee
Taking $p=2$ in \eqref{cxb9}, by  Gronwall's inequality, together with  \eqref{cxb4} and \eqref{cxb160}, we check that
\be\la{cxb161}\ba
\sup_{0\leq t\leq T}\|\nabla\rho\|_{L^{2}}\leq C.
\ea\ee

Furthermore, by \eqref{cxb2}, \eqref{cxb4}, \eqref{cxb15}, \eqref{cxb161} and \eqref{remark1},
\be\la{cxb16}\ba
\int_0^T\|\nabla^{2} u\|_{L^6}^{2}dt\leq C,\,\, \sup_{0\leq t\leq T}\|\nabla^{2}u\|_{L^{2}}\leq C .
\ea\ee
So we finish the proof of Lemma \ref{xle1}.
\end{proof}
\begin{lemma}\la{xle2}
 There exists a constant $C$ such that
\be\la{cxb17}\ba
\sup_{0\le t\le T}\|\rho^{\frac{1}{2}}u_t\|_{L^2}^2 + \ia\int|\nabla u_t|^2dxdt\le C,
\ea\ee
\be\la{cxb18}\ba
\sup_{0\le t\le T}(\|{\rho- \rho_\infty}\|_{H^2} +
 \|{P- P(\rho_\infty)}\|_{H^2})\le C.
\ea\ee
\end{lemma}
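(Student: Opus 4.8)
\emph{Plan of proof.} The idea is that \eqref{cxb17} follows almost immediately from Lemma \ref{xle1}, while \eqref{cxb18} is obtained by differentiating the transport equations for $\rho-\rho_\infty$ and $P-P(\rho_\infty)$ and running a Gronwall argument, whose only non-routine input is an $L^2$-bound for $\nabla^2 F$.

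First I would prove \eqref{cxb17}. Writing $u_t=\dot u-u\cdot\nabla u$, one has $\|\rho^{\frac12}u_t\|_{L^2}\le\|\rho^{\frac12}\dot u\|_{L^2}+\sqrt{\bar\rho}\,\|u\|_{L^6}\|\nabla u\|_{L^3}$, which is bounded by \eqref{cxb2}, \eqref{cxb3} and $\|u\|_{L^6}\le C\|\nabla u\|_{L^2}$. For the dissipation term, $\nabla u_t=\nabla\dot u-\nabla u\cdot\nabla u-u\cdot\nabla^2 u$; since $\|\nabla u\cdot\nabla u\|_{L^2}\le C\|\nabla u\|_{L^4}^2\le C$ by \eqref{cxb3} and $\|u\cdot\nabla^2 u\|_{L^2}\le C\|u\|_{L^6}\|\nabla^2 u\|_{L^3}\le C\|\nabla^2 u\|_{L^2}^{1/2}\|\nabla^2 u\|_{L^6}^{1/2}$ by interpolation, integrating in time and using $\int_0^T\|\nabla\dot u\|_{L^2}^2\,dt\le C$ from \eqref{cxb2} and $\int_0^T\|\nabla^2 u\|_{L^6}^2\,dt\le C$ from \eqref{cxb16} gives $\int_0^T\|\nabla u_t\|_{L^2}^2\,dt\le C$.

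Next, for \eqref{cxb18}: the $L^2$-bounds on $\rho-\rho_\infty$ and $P-P(\rho_\infty)$ come from testing the relevant equations (i.e.\ $(\ref{a1})_1$ for $\rho-\rho_\infty$, and \eqref{lx4}) against $\rho-\rho_\infty$ resp.\ $P-P(\rho_\infty)$, integrating by parts in the convective term, bounding the right-hand side by $C(1+\|\nabla u\|_{L^\infty})(\|\rho-\rho_\infty\|_{L^2}^2+1)$ and similarly for $P$, and applying Gronwall with $\int_0^T\|\nabla u\|_{L^\infty}\,dt\le C$; the first-order bounds $\|\nabla\rho\|_{L^2\cap L^6}\le C$ and $\|\nabla P\|_{L^3}\le C$ are already contained in \eqref{cxb3}. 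The heart of the matter is $\|\nabla^2\rho\|_{L^2}+\|\nabla^2(P-P(\rho_\infty))\|_{L^2}\le C$: applying $\nabla^2$ to $(\ref{a1})_1$, testing with $\nabla^2\rho$, and using $\div u=(2\mu+\lambda)^{-1}(F+P-P(\rho_\infty))$ to rewrite the top-order term as $\rho\,\nabla^2\div u=(2\mu+\lambda)^{-1}\rho\big(\nabla^2 F+\nabla^2(P-P(\rho_\infty))\big)$, one estimates the $\nabla^2 F$-contribution by $C\|\nabla^2 F\|_{L^2}(\|\nabla^2\rho\|_{L^2}^2+1)$, the $\nabla^2(P-P(\rho_\infty))$-contribution by $C(\|\nabla^2\rho\|_{L^2}^2+\|\nabla^2(P-P(\rho_\infty))\|_{L^2}^2)$ — keeping, if desired, the favourable-sign piece $(2\mu+\lambda)^{-1}a\gamma\int\rho^\gamma|\nabla^2\rho|^2\ge0$ on the left — and all remaining terms, which involve at most $\nabla^2u\in L^6$, $\nabla\rho$ and $\nabla^2\rho$, by $C(1+\|\nabla u\|_{L^\infty}+\|\nabla^2 u\|_{L^6})(\|\nabla^2\rho\|_{L^2}^2+1)$. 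By \eqref{x2666} together with \eqref{cxb2} and \eqref{cxb3}, $\|\nabla^2 F\|_{L^2}\le C\|\rho\dot u\|_{W^{1,2}}\le C(1+\|\nabla\dot u\|_{L^2})$, which lies in $L^2(0,T)$; hence every coefficient above lies in $L^1(0,T)$. Performing the analogous computation on \eqref{lx4} (where the sum of the $\gamma(P-P(\rho_\infty))$- and $\gamma P(\rho_\infty)$-coefficients of $\div u$ equals $\gamma P\ge0$) produces a closed coupled Gronwall inequality for $\|\nabla^2\rho\|_{L^2}^2+\|\nabla^2(P-P(\rho_\infty))\|_{L^2}^2$, whose initial value is controlled by $\|\rho_0-\rho_\infty\|_{H^2}+\|P(\rho_0)-P(\rho_\infty)\|_{H^2}$; this yields \eqref{cxb18}.

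The step I expect to be the main obstacle is the closure of this $\nabla^2\rho$, $\nabla^2(P-P(\rho_\infty))$ estimate. Differentiating the continuity equation twice produces the term $\rho\,\nabla^3 u$, which cannot be absorbed using only the bounds on $\nabla^2 u$ obtained so far; the resolution is to trade $\nabla^3 u$ for $\nabla^2 F$ and $\nabla^2(P-P(\rho_\infty))$ through the effective viscous flux $F$ — the pressure part being harmless (bounded-coefficient coupling, or a definite sign), and the flux part being tamed by the higher elliptic estimate \eqref{x2666}, which reduces it to the $L^2(0,T;L^2)$-norm of $\nabla\dot u$ already controlled in Lemma \ref{xle1}.
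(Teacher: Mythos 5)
Your proposal is correct and follows essentially the same route as the paper: \eqref{cxb17} via the splitting $u_t=\dot u-u\cdot\nabla u$ together with Lemma \ref{xle1}, and \eqref{cxb18} via the twice-differentiated transport equations for $\rho$ and $P$, with the top-order term reduced to the effective viscous flux through \eqref{x2666} (so that $\|\nabla^2F\|_{L^2}\le C\|\rho\dot u\|_{H^1}\le C(1+\|\nabla\dot u\|_{L^2})\in L^2(0,T)$) and closed by Gronwall using $\int_0^T\big(\|\nabla u\|_{L^\infty}+\|\nabla^2u\|_{L^6}+\|\nabla\dot u\|_{L^2}^2\big)dt\le C$ from Lemma \ref{xle1}. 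The only minor difference is organizational: the paper controls $\rho\,\nabla^2\div u$ through the full third-order estimate \eqref{remark2} (the div--curl bound of Lemma \ref{crle5}), whereas you substitute $\div u=(2\mu+\lambda)^{-1}\big(F+P-P(\rho_\infty)\big)$ directly, which dispenses with the curl part and yields the sign-favourable term $\int\rho P'(\rho)|\nabla^2\rho|^2dx\ge0$, but both versions rest on exactly the same inputs.
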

\begin{proof} By Lemma \ref{xle1},  we have
\be\la{cxb19}\ba
\|\rho^{\frac{1}{2}}
u_t\|_{L^2}^2 &\le \|\rho^{\frac{1}{2}}\dot u \|_{L^2}^2+\|\n^{\frac{1}{2}}u\cdot\na u\|_{L^2}^2\\
&\le C+C\|\rho^{\frac{1}{2}}u\|_{L^2}\|u\|_{L^6}\|\nabla u\|_{L^6}^2 \\
&\le C ,
\ea\ee
and
\be \la{cxb20}\ba
 \int_0^T\|\nabla u_t\|_{L^2}^2dt &\le\int_0^T\|\nabla \dot
u\|_{L^2}^2dt + \int_0^T\|\nabla(u\cdot\nabla u)\|_{L^2}^2dt \\
&\le C+\int_0^T\|\nabla u\|_{L^4}^4+\|u\|_{L^\infty}^2\|\nabla^{2}u\|_{L^2}^2dt  \\
&\le C+C\int_0^T(\|\nabla^{2}u\|_{L^2}^4+\|\nabla u\|_{H^1}^{2}\|\nabla^{2}u\|_{L^2}^2)dt \\
&\le C.
\ea\ee
which give \eqref{cxb17}.

 Notice that $P $ satisfies
 \bn \la{cxb21} P_t+u\cdot\nabla P+\ga P{\rm div}u=0,
\en
and by $(\ref{a1})_1$, Lemma \ref{xle1}, \eqref{remark1} and \eqref{remark2}, one has
\be \la{cxb22}\ba
&\frac{d}{dt}\left(\|\nabla^2P\|_{L^2}^2 +\|\nabla^2\rho\|_{L^2}^2\right)\\
&\le C(1+\|\nabla u\|_{L^\infty})(\|\nabla^2P\|_{L^2}^2 +\|\nabla^2\rho
\|_{L^2}^2) + C\|\nabla\dot{u}\|_{L^2}^2 + C ,
\ea\ee where we have used the following simple fact that   for $p\in[2,6]$,
\be\la{remark2}\ba
&\|\nabla^{3}u\|_{L^p}\leq C(\|\div u\|_{W^{2,p}}+\|\curl u\|_{W^{2,p}})\\
&\leq C(\|\rho\dot{u}\|_{W^{1,p}}+\|P-P(\rho_\infty)\|_{W^{2,p}}+\|\rho\dot{u}\|_{L^2}+\|\nabla u\|_{L^2}+\|P-P(\rho_\infty)\|_{L^2}),
\ea\ee due to Lemma \ref{crle5} and \eqref{x266}-\eqref{x268}.
Then, by Gronwall's inequality, it follows from \eqref{cxb22}, \eqref{cxb160} and Lemma \ref{xle1} that
  \bnn \sup_{0\le t\le T} {\left(\|\nabla^2P\|_{L^2}^2
+\|\nabla^2\rho\|_{L^2}^2 \right)}\le C. \enn And the proof is completed.
\end{proof}
\begin{lemma}\la{xle3}
There exists a positive constant $C$ such that \be\la{cxb24}
   \sup\limits_{0\le t\le T}\left(
   \|\n_t\|_{H^1}+\|P_t\|_{H^1}\right)
    + \int_0^T\left(\|\n_{tt}\|_{L^2}^2+\|P_{tt}\|_{L^2}^2\right)dt
\le C,
  \ee
\be\la{cxb25}
   \sup\limits_{0\le t\le T}\si \|\nabla u_t\|_{L^2}^2
    + \int_0^T\si\|\rho^{\frac{1}{2}}u_{tt}\|_{L^2}^2dt
\le C.
  \ee
\end{lemma}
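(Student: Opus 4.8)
The plan is to obtain \eqref{cxb24} directly from the transport structure, and then \eqref{cxb25} from a $\si$-weighted energy estimate on the time-differentiated momentum equation. For \eqref{cxb24}, write $\rho_t=-u\cdot\na\rho-\rho\,\div u$ and differentiate once more in $x$: using $\|\na\rho\|_{L^2\cap L^6}+\|\na u\|_{H^1}+\|\na^2\rho\|_{L^2}\le C$ from Lemmas \ref{xle1}--\ref{xle2}, the Sobolev embeddings $\|u\|_{L^\infty}+\|\na u\|_{L^6}\le C$ and $\|u_t\|_{L^6}\le C\|\na u_t\|_{L^2}$, and H\"older's inequality, one gets $\|\rho_t\|_{H^1}\le C$, and likewise for $P=a\rho^{\ga}$ since $\rho$ is bounded. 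For the second time derivative, $\rho_{tt}=-\div\big((\rho u)_t\big)=-\div(\rho_tu+\rho u_t)$ gives $\|\rho_{tt}\|_{L^2}\le C(1+\|\na u_t\|_{L^2})$ (and similarly $P_{tt}$); integrating in $t$ and using $\int_0^T\|\na u_t\|_{L^2}^2dt\le C$ from \eqref{cxb17} completes \eqref{cxb24}.

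For \eqref{cxb25}, differentiate $(\ref{a1})_2$, written as $\rho\dot u=\mu\Delta u+(\mu+\lambda)\na\div u-\na P$, in $t$ to obtain
\be\la{diffmom}
\rho u_{tt}+\rho u\cdot\na u_t-\mu\Delta u_t-(\mu+\lambda)\na\div u_t=-\rho_t\dot u-\rho u_t\cdot\na u-\na P_t,
\ee
and multiply by $\si u_{tt}$, integrating over $\Omega$. Since $n$ is $t$-independent, the slip conditions $u_{tt}\cdot n=0$ and $\curl u_t\times n=0$ hold on $\partial\Omega$; writing $-\Delta u_t=\na\times\curl u_t-\na\div u_t$, all boundary integrals vanish, the viscous part becomes $\frac{\si}{2}\frac{d}{dt}\int\big((2\mu+\lambda)(\div u_t)^2+\mu|\curl u_t|^2\big)dx$, and the pressure term becomes $\si\frac{d}{dt}\int P_t\,\div u_t\,dx-\si\int P_{tt}\,\div u_t\,dx$. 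The one term not absorbable by a plain Cauchy--Schwarz against $\si^{1/2}\|\rho^{1/2}u_{tt}\|_{L^2}$ is $-\si\int\rho_t\dot u\cdot u_{tt}\,dx$ --- a crude bound would cost a factor $\rho^{-1}|\rho_t|^2$, useless on the vacuum set. Here one uses $\rho_t=-\div(\rho u)$ together with $u_{tt}=\partial_t\dot u-u_t\cdot\na u-u\cdot\na u_t$, so that $\dot u\cdot u_{tt}=\frac12\partial_t|\dot u|^2-\dot u\cdot(u_t\cdot\na u)-\dot u\cdot(u\cdot\na u_t)$; an integration by parts in $t$ in the first piece replaces the lingering $u_{tt}$ by $-\frac{\si}{2}\frac{d}{dt}\int\rho_t|\dot u|^2\,dx+\frac{\si}{2}\int\rho_{tt}|\dot u|^2\,dx$, plus the terms $\si\int\rho_t\dot u\cdot(u_t\cdot\na u)\,dx$ and $\si\int\rho_t\dot u\cdot(u\cdot\na u_t)\,dx$, none of which contains $u_{tt}$ and all of which are controlled by \eqref{cxb2}, \eqref{cxb24}, Gagliardo--Nirenberg's inequality and $\int_0^T\|\na u_t\|_{L^2}^2dt\le C$.

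Collecting all exact time derivatives into $\frac12\frac{d}{dt}\Phi(t)$ with $\Phi(t)=\si\big((2\mu+\lambda)\|\div u_t\|_{L^2}^2+\mu\|\curl u_t\|_{L^2}^2\big)+(\text{lower-order terms})$, one checks by \eqref{ljq01} applied to $u_t$ (which satisfies $u_t\cdot n=0$) that $\Phi\ge c\,\si\|\na u_t\|_{L^2}^2$ up to absorbable terms, while the $\si'$-errors lie in $L^1(0,T)$ by \eqref{cxb2}, \eqref{cxb17} and \eqref{cxb24}. The remaining right-hand terms --- $\si\int\rho(u\cdot\na u_t)\cdot u_{tt}$, $\si\int\rho u_t\cdot\na u\cdot u_{tt}$, $\si\int\rho_{tt}|\dot u|^2$, $\si\int P_{tt}\div u_t$, and the lower-order ones above --- are estimated by absorbing $\frac14\si\|\rho^{1/2}u_{tt}\|_{L^2}^2$ into the left side and bounding the rest by $C\si\|\na u_t\|_{L^2}^2+C\si\big(\|\rho_{tt}\|_{L^2}^2+\|P_{tt}\|_{L^2}^2\big)+C\in L^1(0,T)$. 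Since $\Phi(t)\to0$ as $t\to0^+$, Gronwall's inequality yields \eqref{cxb25}. The main obstacle is precisely this vacuum degeneracy: because $u_t$ and $u_{tt}$ are controlled only weighted by $\rho^{1/2}$, the $\rho_t u_{tt}$-coupling has to be unwound by the above integration by parts in time rather than by a direct energy inequality; monitoring the $\si$-weight near $t=0$ and verifying that the boundary terms vanish under the slip conditions are the remaining points requiring care.
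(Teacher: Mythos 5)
Your treatment of \eqref{cxb24} is essentially the paper's: both estimate $\rho_t,P_t$ in $H^1$ from the transport equations and then bound $\|\rho_{tt}\|_{L^2}+\|P_{tt}\|_{L^2}$ by $C(1+\|\na u_t\|_{L^2})$ and integrate using \eqref{cxb17}; that part is fine. For \eqref{cxb25} your overall strategy (differentiate the momentum equation in $t$, test with $\si u_{tt}$, use the slip conditions $u_{tt}\cdot n=0$, $\curl u_t\times n=0$ to kill the boundary terms, and unwind the $\rho_t$--$u_{tt}$ coupling by an integration by parts in time so that only $\rho_{tt}$ appears) is also the same mechanism the paper uses (its $\frac{d}{dt}I_0$ and $I_1$ terms in \eqref{cxb34}). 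The gap is in how you claim to close the resulting terms.

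Concretely, you assert that $\si\int\rho_{tt}|\dot u|^2dx$ is bounded by $C\si\|\na u_t\|_{L^2}^2+C\si\big(\|\rho_{tt}\|_{L^2}^2+\|P_{tt}\|_{L^2}^2\big)+C$. Pairing $\rho_{tt}\in L^2$ against $|\dot u|^2$ requires $\|\dot u\|_{L^4}$ (or some unweighted $L^p$, $p<6$, norm of $\dot u$) uniformly in time, and no such bound is available: in the exterior domain with vacuum one only controls $\rho^{1/2}\dot u$ in $L^\infty(L^2)$ and $\na\dot u$ in $L^2(L^2)$, hence $\dot u\in L^6$ but nothing below $L^6$ on the vacuum set or near infinity. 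The same objection applies to the ``lower--order'' pieces you put into $\Phi$, such as $\si\int\rho_t|\dot u|^2dx$ and its $\si'$--variant, whose absorbability into $c\,\si\|\na u_t\|_{L^2}^2$ you only assert. The way the paper handles exactly these terms (see \eqref{cxb35}--\eqref{cxb36}) is an additional \emph{spatial} integration by parts, writing $\rho_t=-\div(\rho u)$ and $\rho_{tt}=-\div\big((\rho u)_t\big)$ and moving the divergence onto $|u_t|^2$, so that every factor can be interpolated through quantities that are actually controlled ($\|\rho^{1/2}u_t\|_{L^2}$, $\|u_t\|_{L^6}\le C\|\na u_t\|_{L^2}$, $\|u\|_{L^\infty}$, $\|\rho_t\|_{H^1}$), producing either $\de\|\na u_t\|_{L^2}^2+C(\de)$ or a Gronwall coefficient in $L^1(0,T)$. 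Your $\dot u$--based bookkeeping can be repaired in the same way (e.g.\ $\int\rho_t|\dot u|^2dx=2\int\rho\,(u\cdot\na)\dot u\cdot\dot u\,dx\le C\|u\|_{L^\infty}\|\rho^{1/2}\dot u\|_{L^2}\|\na\dot u\|_{L^2}\le\de\|\na u_t\|_{L^2}^2+C(\de)$, and an analogous splitting for the $\rho_{tt}$ term), but as written this step — which is the heart of the lemma in the presence of vacuum — is missing, so the proof is incomplete at precisely the point where it claims everything is ``controlled''.
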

\begin{proof} By (\ref{cxb21}) and Lemma \ref{xle1},
\be \la{cxb26}
\|P_t\|_{L^2}\le
C\|u\|_{L^\infty}\|\nabla P\|_{L^2}+C\|\nabla u\|_{L^2}\le C.
\ee
Differentiating (\ref{cxb21}) leads to
\bnn
\nabla P_t+u\cdot\nabla\nabla P+\nabla u\cdot\nabla P+\ga \nabla P {\rm div}u+\ga P  \nabla{\rm div}u=0.
\enn
Hence, by Lemmas \ref{xle1} and \ref{xle2},
\bn\la{cxb27} \|\nabla P_t\|_{L^2}\le C\|u\|_{L^\infty}\|\nabla^2
P\|_{L^2}+C\|\nabla u\|_{L^3}\|\nabla P\|_{L^6}+C\|\nabla^2
u\|_{L^2}\le C,\en
which together with (\ref{cxb26}) implies
\bn \la{cxb28}\sup_{0\le t\le T}\|P_t\|_{H^1}\le C.
\en
By \eqref{cxb21} again, it is easy to check that $P_{tt}$ satisfies
\be\la{cxb29} P_{tt} + \gamma P_t{\rm div}u +
\gamma P{\rm div}u_t + u_t\cdot\nabla P + u\cdot\nabla P_t = 0.
\ee
Multiplying \eqref{cxb29} by $P_{tt}$ and integrating over $\Omega\times[0,T],$ we deduce from \eqref{cxb28}, Lemmas \ref{xle1} and \ref{xle2} that
\bnn\ba
&\int_0^T\|P_{tt}\|_{L^2}^2dt \\
& = -\int_0^T\int\gamma P_{tt}P_t\div udxdt - \int_0^T\int\gamma P_{tt}P\div u_tdxdt  \\
& \quad - \int_0^T\int P_{tt}u_t\cdot\nabla Pdxdt - \int_0^T\int P_{tt}u\cdot\nabla P_tdxdt \\
& \le C\int_0^T\|P_{tt}\|_{L^2}(\|P_{t}\|_{L^3}\|\nabla u\|_{L^6}+\|\nabla u_t\|_{L^2}+\|u_t\|_{L^6}\|\nabla P\|_{L^3}+\|u\|_{L^\infty}\|\nabla P_{t}\|_{L^2})dt\\  & \le C\int_0^T\|P_{tt}\|_{L^2}(1+\|\nabla u_t\|_{L^2})dt\\
& \le \frac{1}{2}\int_0^T\|P_{tt}\|_{L^2}^2dt+C\int_0^T\|\nabla u_t\|_{L^2}^2dt+C\\
& \le \frac{1}{2}\int_0^T\|P_{tt}\|_{L^2}^2dt+C,
\ea\enn
which gives
$\int_0^T\|P_{tt}\|_{L^2}^2dt \le C.$

One can deal with $\n_t$ and
$\n_{tt}$ similarly. Thus (\ref{cxb24}) is proved.

It remains to prove (\ref{cxb25}). First, introducing the function
$$H(t)=(\lambda+2\mu)\int(\div u_t)^{2}dx+\mu\int|\curl u_t|^{2}dx .$$
Since $u_t\cdot n = 0$ on $\partial\Omega$, by Lemma \ref{crle1}, we have
\be\ba\la{cxb40}
\|\nabla u_t\|_{L^2}^2\leq C(\Omega) H(t).
\ea\ee

Differentiating  $(\ref{a1})_2$  with
respect to $t,$ then multiplying by
$u_{tt},$  yield that
\be\la{cxb34} \ba
&\frac{d}{dt}\left((\lambda+2\mu)\int(\div u_t)^{2}dx+\mu\int|\curl u_t|^2dx\right)+2\int\rho|u_{tt}|^2dx  \\
&=\frac{d}{dt}\left(-\int\rho_t\,|u_t|^{2}dx-2\int\rho_t\,u\cdot\nabla u\cdot u_tdx+2\int P_t\,\div u_tdx\right) \\
&\quad +\int\rho_{tt}|u_t|^{2}dx + 2\int(\rho_t\,u\cdot\nabla u)_t\cdot u_tdx-2\int\rho u_t\cdot\nabla u\cdot u_{tt}dx \\
&\quad -2\int\rho u\cdot\nabla u_t\cdot u_{tt}dx - 2\int P_{tt}\,\div u_tdx \\
&\triangleq\frac{d}{dt}I_0 + \sum\limits_{i=1}^5I_i .
\ea \ee
Now we have to estimate $I_i$, $i=0,1,\cdots, 5.$

It follows from $(\ref{a1})_1,$  (\ref{cxb2}), (\ref{cxb3}), (\ref{cxb4}), (\ref{cxb17}) and
(\ref{cxb24}) that
\be \ba \la{cxb35}|
I_0|& =\left|-\int_{
}\rho_t |u_t|^2 dx- 2\int_{ }\rho_t u\cdot\nabla u\cdot u_tdx+
2\int_{ }P_t {\rm div}u_tdx\right|\\
&\le \left|\int_{ } {\rm
div}(\rho u)|u_t|^2dx\right|+C\norm[L^3]{\rho_t}\|u\|_{L^{\infty}}\|\nabla u\|_{L^2}
\norm[L^6]{u_t}+C\|\nabla u_t\|_{L^2}\\
&\le C \int_{} \n |u||u_t||\nabla u_t| dx +C\|\nabla u_t\|_{L^2} \\
&\le C\|u\|_{L^6}\|\n^{1/2} u_t\|_{L^2}^{1/2}\|u_t\|_{L^6}^{1/2}\|\nabla
u_t\|_{L^2} +C\|\nabla u_t\|_{L^2}\\
&\le C\|\nabla u\|_{L^2}\|\n^{1/2} u_t\|_{L^2}^{1/2}\|\nabla u_t\|_{L^2}^{3/2}+C\|\nabla u_t\|_{L^2}\\
&\le \de\|\nabla u_t\|_{L^2}^2+C(\de),\ea\ee
\be \la{cxb36}\ba
|I_1|&=\left|\int_{ }\rho_{tt} |u_t|^2 dx\right|=\left|\int_{ }\div(\rho u)_t|u_t|^2 dx\right|\\
& = 2\left|\int_{ }(\rho_tu + \rho u_t)\cdot\nabla u_t\cdot u_tdx\right|\\
& \le  C\left(\norm[H^1]{\rho_t}\norm[H^2]{u}
  +\|\rho^{{1/2}}u_t\|_{L^2}^{\frac{1}{2}}\|\nabla u_t\|_{L^2}^{\frac{1}{2}}\right)\|\nabla u_t\|_{L^2}^2 \\
& \le C\|\nabla u_t\|_{L^2}^4 + C\\
& \le C\|\nabla u_t\|_{L^2}^2H(t) + C,
\ea \ee
\be \la{cxb37}\ba
|I_2|&=2\left|\int_{ }\left(\rho_t u\cdot\nabla u \right)_t\cdot u_{t}dx\right|\\
& = 2\left|  \int_{ }\left(\rho_{tt}\, u\cdot\nabla u\cdot u_t +\rho_t
u_t\cdot\nabla u\cdot u_t+\rho_t u\cdot\nabla u_t\cdot
u_t\right)dx\right|\\
&\le\norm[L^2]{\rho_{tt}}\|u\|_{L^\infty}\norm[L^3]{\nabla u}\norm[L^6]{u_t}+\norm[L^2]{\rho_t}\|u_t\|_{L^6}^2\norm[L^6]{\nabla u} \\
&\quad+\norm[L^3]{\rho_t}\norm[L^{\infty}]{u}\norm[L^2]{\nabla u_t}\norm[L^6]{u_t}\\
& \le C\norm[L^2]{\rho_{tt}}^2 + C\norm[L^2]{\nabla u_t}^2,\ea \ee
\be\ba\la{cxb38}
|I_3|+|I_4|&= 2\left| \int_{ }\rho u_t\cdot\nabla
u\cdot u_{tt} dx\right| +2\left| \int_{ }\rho u\cdot\nabla u_t\cdot
u_{tt} dx\right|\\& \le   C\|\n^{1/2}u_{tt}\|_{L^2}\left(
\|u_t\|_{L^6}\|\na u\|_{L^3}+\|u\|_{L^\infty}\|\na
u_t\|_{L^2}\right) \\& \le  \de \norm[L^2]{\rho^{{1/2}}u_{tt}}^2 +
C(\de)\norm[L^2]{\nabla u_t}^2 , \ea\ee
and
\be\ba\la{cxb39}
|I_5|&=2\left|\int_{ }P_{tt}{\rm div}u_tdx\right|\\&\le C
\norm[L^2]{P_{tt}}\norm[L^2]{{\rm div}u_t}\\& \le
C(\norm[L^2]{P_{tt}}^2 + \norm[L^2]{\nabla u_t}^2).
\ea\ee
Therefore, together with these estimates above, we have
\bnn\la{cxb41} \ba
&\frac{d}{dt}\left(\sigma H(t)-\sigma I_0\right)+\sigma\int\rho|u_{tt}|^{2}dx \\
&\le C(1+\|\nabla u_t\|_{L^2}^2)\sigma H(t)+C(1+\|\nabla u_t\|_{L^2}^2+\|\rho_{tt}\|_{L^2}^2+\|P_{tt}\|_{L^2}^2),
\ea \enn
By Gronwall's inequality, together with \eqref{cxb17}, \eqref{cxb24} and \eqref{cxb35}, and choose $\delta>0,$ such that $C(\Omega)\delta<\frac{1}{2}$, where $C(\Omega)$ is given by \eqref{cxb40}, we get
\bnn\la{cxb42} \ba
&\sup_{0\le t\le T}\sigma H(t)+\int_0^T\sigma\|\rho^{\frac{1}{2}}u_{tt}\|_{L^2}^2dt\le C .
\ea \enn
Hence, by \eqref{cxb40},
\bnn\la{cxb43} \ba
&\sup_{0\le t\le T}\sigma \|\nabla u_t\|_{L^2}^2+\int_0^T\sigma\|\rho^{\frac{1}{2}}u_{tt}\|_{L^2}^2dt\le C.
\ea \enn
\end{proof}
\begin{lemma}\la{xle4}
It holds that for any $q\in(3,6),$ there exists a positive constant $C$ such that
\be\la{cxb44}\ba \sup_{t\in[0,T]}\left(\|\rho- \rho_\infty\|_{W^{2,q}} +\|P-P(\rho_\infty)\|_{W^{2,q}}\right)\le C,\ea \ee
\be\la{cxb45}\ba\sup_{t\in[0,T]} \si \|\nabla u\|_{H^2}^2  +\ia \left(\|\nabla  u\|_{H^2}^2+\|\na^2
u\|^{p_0}_{W^{1,q}}+\si\|\na u_t\|_{H^1}^2\right)dt\le C,\ea \ee
where $p_0=\frac{9q-6}{10q-12}\in(1,\frac{7}{6}).$
\end{lemma}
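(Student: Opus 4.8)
The plan is to bootstrap the second-order bounds of Lemmas \ref{xle1}--\ref{xle3} up to the $W^{2,q}$ level in three stages, closing each stage by a Gronwall-type argument in which every coefficient and forcing term lies in $L^1(0,T)$, possibly after inserting the weight $\sigma$ to absorb the singularity at $t=0$. \textbf{Step 1 (the $\sigma$-weighted $H^1$ bound for $u_t$, and the $H^2$ bound for $\na u$).} Differentiating $(\ref{a1})_2$ in $t$ and treating the outcome as a Lam\'e-type elliptic system for $u_t$, which inherits $u_t\cdot n=0$ and $\curl u_t\times n=0$ on $\pa\Omega$ since $n$ is time-independent, the $W^{2,2}$ estimate obtained exactly as \eqref{remark1} (through Lemma \ref{crle5}) gives $\|\na^2 u_t\|_{L^2}\le C(\|\rho u_{tt}\|_{L^2}+\|\rho_t u_t\|_{L^2}+\|(\rho u\cdot\na u)_t\|_{L^2}+\|\na P_t\|_{L^2}+\|\na u_t\|_{L^2})$. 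Bounding the right-hand side by $\|\rho^{1/2}u_{tt}\|_{L^2}$, $\|\na u_t\|_{L^2}$ and quantities already controlled in Lemmas \ref{xle1}--\ref{xle3}, then multiplying by $\sigma$ and integrating, \eqref{cxb25} and \eqref{cxb17} yield $\ia\sigma\|\na u_t\|_{H^1}^2\,dt\le C$. For $\na u$ itself I would combine \eqref{remark2} at $p=2$ with $\|\na(\rho\dot u)\|_{L^2}\le C(\|\na\rho\|_{L^3}\|\dot u\|_{L^6}+\|\na\dot u\|_{L^2})\le C(1+\|\na u_t\|_{L^2})$ to get $\|\na^3 u\|_{L^2}\le C(1+\|\na u_t\|_{L^2})$, hence $\sup_{[0,T]}\sigma\|\na u\|_{H^2}^2\le C$ by \eqref{cxb25} and $\ia\|\na u\|_{H^2}^2\,dt\le C$ by \eqref{cxb17}.

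\textbf{Step 2 (the $W^{2,q}$ bounds for $\rho-\rho_\infty$ and $P-P(\rho_\infty)$).} Applying $\na^2$ to the transport equations $(\ref{a1})_1$ and \eqref{cxb21} produces, schematically, $\frac{d}{dt}\|\na^2\rho\|_{L^q}\le C\|\na u\|_{L^\infty}\|\na^2\rho\|_{L^q}+C\|\na\rho\|_{L^\infty}\|\na^2 u\|_{L^q}+C\|\na^3 u\|_{L^q}$, and likewise for $\|\na^2 P\|_{L^q}$. I would bound $\|\na\rho\|_{L^\infty}$ and $\|\na P\|_{L^\infty}$ by $\de\|\na^2(\rho,P)\|_{L^q}+C(\de)$ via Gagliardo--Nirenberg (this is where $q>3$ is used), and $\|\na^3 u\|_{L^q}$ by \eqref{remark2}, which — after writing $\na\dot u=\na u_t+\na u\cdot\na u+u\cdot\na^2 u$ and invoking the Sobolev and Gagliardo--Nirenberg inequalities, \eqref{remark1} and Step 1 — is controlled by $\|\na^2 P\|_{L^q}+\|\na u_t\|_{L^q}+\|\na^2 u\|_{L^q}$ plus bounded quantities. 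This yields a closed inequality $\frac{d}{dt}\Phi\le h(t)\Phi+k(t)$ for $\Phi\triangleq\|\na^2\rho\|_{L^q}+\|\na^2 P\|_{L^q}$ with $h\in L^1(0,T)$ (since $\ia\|\na u\|_{L^\infty}\,dt\le C$ by \eqref{cxb3}, and both $\|\na^2 u\|_{L^q}\lesssim 1+t^{-(3q-6)/(4q)}$ and $\|\dot u\|_{L^q}\lesssim 1+\|\na u_t\|_{L^2}\lesssim 1+t^{-1/2}$ are in $L^1(0,T)$) and $k\in L^1(0,T)$ (since $\ia\|\na u_t\|_{L^q}\,dt\le C$ after interpolating $\|\na u_t\|_{L^2}$ against $\|\na^2 u_t\|_{L^2}$ with Step 1). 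Gronwall's inequality then gives \eqref{cxb44}.

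\textbf{Step 3 ($\na^2 u\in L^{p_0}(0,T;W^{1,q})$, and conclusion).} With $\sup_{[0,T]}\|(\rho,P)-(\rho_\infty,P(\rho_\infty))\|_{W^{2,q}}\le C$ now available, \eqref{remark2} reduces $\|\na^3 u\|_{L^q}$ to $\|\na u_t\|_{L^q}+\|\na^2 u\|_{L^q}+\|\na u\|_{L^{2q}}^2$ plus bounded terms. Interpolating $\|\na u_t\|_{L^q}\le C\|\na u_t\|_{L^2}^{(6-q)/(2q)}\|\na^2 u_t\|_{L^2}^{(3q-6)/(2q)}$ and using $\|\na u_t\|_{L^2}\le Ct^{-1/2}$ on $(0,1]$ together with $\ia\sigma\|\na^2 u_t\|_{L^2}^2\,dt\le C$ from Step 1, a H\"older argument in $t$ shows $\ia\|\na u_t\|_{L^q}^{p_0}\,dt\le C$ as soon as $p_0\,\frac{5q-6}{4q}<1$; the choice $p_0=\frac{9q-6}{10q-12}$ makes this number equal to $\frac{9q-6}{8q}$, which is $<1$ exactly because $q<6$, and the same $p_0$ also absorbs the milder singularities carried by $\|\na^2 u\|_{L^q}$ and $\|\na u\|_{L^{2q}}^2$. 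Collecting Steps 1--3 would establish \eqref{cxb45}.

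\textbf{Main obstacle.} The delicate point is the simultaneous closure in Step 2: $\|\na^3 u\|_{L^q}$ depends on $\|\na^2(\rho,P)\|_{L^q}$ — directly through $\|\na^2 P\|_{L^q}$ in \eqref{remark2} and indirectly through the product $\|\na\rho\|_{L^\infty}\|\dot u\|_{L^q}$ — whereas the transport inequalities for $\|\na^2(\rho,P)\|_{L^q}$ depend on $\|\na^3 u\|_{L^q}$, so one has to keep precise track of which time-weighted norms enter the Gronwall coefficient and check their $L^1(0,T)$-integrability near $t=0$. This is exactly where the weight $\sigma$ from Step 1 and the particular value $p_0=\frac{9q-6}{10q-12}$ become essential.
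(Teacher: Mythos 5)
Your overall strategy is the same as the paper's: elliptic (Lam\'e) estimates with the slip conditions for the time-differentiated momentum equation give $\int_0^T\si\|\na u_t\|_{H^1}^2\,dt\le C$ and $\|\na u\|_{H^2}\le C(1+\|\na u_t\|_{L^2})$; then $\|\na^2(\rho,P)\|_{L^q}$ is closed by the transport equation plus Gronwall, using $\|\na u\|_{L^\infty}\in L^1(0,T)$ and a time-integrable forcing; and your exponent bookkeeping for $p_0$ (the condition $p_0\tfrac{5q-6}{4q}<1$, which for $p_0=\tfrac{9q-6}{10q-12}$ equals $\tfrac{9q-6}{8q}<1$ exactly when $q<6$) is precisely the computation behind \eqref{cxb53}--\eqref{cxb55}.

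However, one step as written is not justified and can fail: in Step 2 you control the term $\na\rho\otimes\dot u$ inside $\na(\rho\dot u)$ by $\|\na\rho\|_{L^\infty}\|\dot u\|_{L^q}$ and claim $\|\dot u\|_{L^q}\lesssim 1+\|\na u_t\|_{L^2}$ for $q\in(3,6)$. On the exterior domain the available information is $\rho^{1/2}u_t\in L^\infty_t L^2_x$ and $\na u_t\in L^2_x$, hence $u_t\in L^6$ only; since $\rho$ may vanish (and, when $\rho_\infty=0$, has no positive lower bound even near infinity), one cannot interpolate down from $L^6$ to $L^q$ with $q<6$, so $\|u_t\|_{L^q}$, and hence $\|\dot u\|_{L^q}$, is not controlled by $1+\|\na u_t\|_{L^2}$; even for $\rho_\infty>0$ this would need an extra argument (e.g. that the measure of $\{\rho<\rho_\infty/2\}$ is uniformly finite) which you do not supply. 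The paper avoids the issue by distributing the norms the other way: $\|\na\rho\,\dot u\|_{L^q}\le\|\na\rho\|_{L^q}\|\dot u\|_{L^\infty}$, with $\|\na\rho\|_{L^q}$ bounded by \eqref{cxb3} and $\|\dot u\|_{L^\infty}\le C(\|\na\dot u\|_{L^2}+\|\na\dot u\|_{L^q})$ from Lemma \ref{l1}, which is how \eqref{cxb53} is obtained. If you replace your splitting by this one (in Step 2, and again where the same product reappears in Step 3), the remainder of your argument goes through and coincides with the paper's proof.
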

 \begin{proof}
By \eqref{cxb18} and Lemma \ref{xle1},
\be\la{cxb46} \ba\|\nabla^2 u\|_{H^1} &\le C (\|\n \dot u\|_{H^1}+ \| P-\bar{P}\|_{H^2}+\|u\|_{L^2})\\
 &\le C+C \|\na  u_t\|_{L^2},
 \ea\ee
 where we have use the fact that
  \be \label{cxb47} \ba
  \|\nabla (\n \dot u) \|_{L^2}&\le
 \||\nabla \n |\, |  u_t|  \|_{L^2}+ \|\n \,\nabla   u_t  \|_{L^2}
 + \||\nabla \n|\,| u|\,|\nabla u| \|_{L^2}\\ &\quad
 + \|\n\,|\nabla  u|^2\|_{L^2}
 + \|  \n \,|u |\,| \nabla^2 u| \|_{L^2}\\&\le
 \|\nabla \n \|_{L^3} \|  u_t  \|_{L^6}+ C\| \nabla   u_t  \|_{L^2}
 + C\| \nabla \n\|_{L^3}\| u\|_{L^\infty}\|\nabla u \|_{L^6}\\
 &\quad + C\| \nabla  u\|_{L^3}\| \nabla  u\|_{L^6}
 + C\|     u \|_{L^\infty}\| \nabla^2 u  \|_{L^2}\\ &\le C+C\| \nabla   u_t  \|_{L^2}.
 \ea\ee
 So we deduce from (\ref{cxb46}),
(\ref{cxb3}), (\ref{cxb17}), and (\ref{cxb25}) that
\be\la{cxb48}
\sup\limits_{0\le
t\le T}\si\|\nabla  u\|_{H^2}^2+\ia \|\nabla  u\|_{H^2}^2dt \le
 C.\ee
By Lemma \ref{xle1} and \eqref{cxb24}, we have
\be\la{cxb480}\ba
&\|\na^2u_t\|_{L^2}\\
&\le C(\|(\rho\dot{u})_t\|_{L^2}+\|\nabla P_t\|_{L^2}+\|\nabla u_t\|_{L^2}) \\
&=C(\|\n  u_{tt}+\n_t u_t+\n_t u\cdot\nabla u + \n u_t\cdot\nabla u+\n u\cdot\nabla u_t\|_{L^2}+\|\nabla P_t\|_{L^2}+\|\nabla u_t\|_{L^2})+C\\
&\le C\left(\|\n  u_{tt}\|_{L^2}+ \|\n_t\|_{L^3}\|u_t\|_{L^6}+\|\n_t\|_{L^3}\| u\|_{L^\infty}\|\nabla u\|_{L^6}\right)\\
&\quad+C\left(\| u_t\|_{L^6}\|\nabla u\|_{L^3}+ \| u\|_{L^\infty}\|\nabla u_t\|_{L^2}+\|\nabla P_t\|_{L^2}+\|\nabla u_t\|_{L^2}\right)\\
&\le C\|\n^{\frac{1}{2}}  u_{tt}\|_{L^2} +C\|\nabla  u_t\|_{L^2}+C,
\ea \ee
where in the first inequality, we have utilized a priori estimate similar to \eqref{remark1} since
\be\la{cxb49}\begin{cases}
  \mu\Delta u_t+(\lambda+\mu)\nabla\div u_t=(\rho\dot{u})_t+\nabla P_t \,\,\, &in \,\,\Omega,\\ u_t\cdot n=0\,\,\,and \,\,\,\curl u_t\times n=0\,\,&on \,\,\partial\Omega .
\end{cases}\ee
Hence, due to \eqref{cxb25},
\be\la{cxb50}\ba
\int_0^T\sigma\|\nabla u_t\|_{H^1}^2dt\leq C.
\ea \ee
By Gagliardo-Nirenberg's inequality, \eqref{cxb3}, \eqref{cxb18} and \eqref{cxb25}, we conclude that for any $q\in (3,6)$,
\be\la{cxb53}\ba     \|\na(\n\dot u)\|_{L^q}
&\le C \|\na \n\|_{L^q}(\|\nabla\dot{u}\|_{L^q}+\|\nabla\dot{u}\|_{L^2})+C\|\na\dot u \|_{L^q}\\
&\le C \|\nabla\dot{u}\|_{L^2}+C(\|\na u_t \|_{L^q}+\|\na(u\cdot \na u ) \|_{L^q})\\
&\le C \|\nabla u_t\|_{L^2}+C+C\|\na u_t \|_{L^2}^{\frac{6-q}{2q}}\|\nabla u_t\|_{L^6}^{\frac{3(q-2)}{2q}}\\
&\quad+C(\|u \|_{L^\infty}\|\nabla^{2}u\|_{L^q}+\|\nabla u\|_{H^1}\|\nabla u\|_{H^2})\\
&\le C\sigma^{-\frac{1}{2}}+C\|\nabla u\|_{H^2}+C\sigma^{-\frac{1}{2}}(\sigma\|\nabla u_t\|_{H^1}^2)^{\frac{3(q-2)}{4q}}+C.
\ea \ee
Integrating this inequality over $[0,T],$ by \eqref{cxb2} and \eqref{cxb50}, we obtain
\be\ba\la{cxb55}
\int_0^T\|\nabla(\rho\dot{u})\|_{L^q}^{p_0}dt\leq C .
\ea\ee
On the other hand, by \eqref{remark1}, \eqref{remark2}, \eqref{cxb2} and \eqref{cxb18},
\be \la{cxb52}\ba \|\na^2 u\|_{W^{1,q}}
 &\le C(\|\rho\dot{u}\|_{L^q}+\|\nabla(\rho\dot{u})\|_{L^q}+\|\nabla^{2} P\|_{L^q}+\|\nabla P\|_{L^q}\\
&\quad+\|\nabla u\|_{L^2}+\|P-\bar{P}\|_{L^2}+\|P-\bar{P}\|_{L^q})\\
 &\le C(1 + \|\na  u_t\|_{L^2}+ \| \na(\n\dot u )\|_{L^{q}}+\|\na^2  P\|_{L^{q}}),
 \ea\ee
together with \eqref{cxb21} and \eqref{cxb18}, which yields that
\be\la{cxb51}\ba
(\|\na^2 P\|_{L^q})_t\le& C \|\na u\|_{L^\infty} \|\na^2 P\|_{L^q}   +C  \|\na^2 u\|_{W^{1,q}}   \\\le& C (1+\|\na u\|_{L^\infty} )\|\na^2 P\|_{L^q}+C(1+ \|\na  u_t\|_{L^2})\\&+ C\| \na(\n
\dot u )\|_{L^{q}},
\ea\ee
 Hence, by Gronwall's inequality, it follows from \eqref{cxb3}, \eqref{cxb17}  and \eqref{cxb55} that
\be\ba\la{cxb56}
\sup_{t\in[0,T]}\|\nabla^{2}P\|_{L^q}\leq C ,
\ea\ee
which along with \eqref{cxb17}, \eqref{cxb18}, \eqref{cxb52} and \eqref{cxb55} also gives
\be\ba\la{cxb57}
\sup_{t\in[0,T]}\|P-\bar{P}\|_{W^{2,q}}+\int_0^T\|\nabla^{2}u\|_{W^{1,q}}^{p_0}dt\leq C .
\ea\ee
Similarly, we have
\bnn\sup\limits_{0\le t\le T}\|
\n-\bar{\rho}\|_{W^{2,q}} \le
 C,\enn
 which together with (\ref{cxb57}) leads to (\ref{cxb44}). Thus we finish the proof of Lemma \ref{xle4}.

\end{proof}
\begin{lemma}\la{zxle5}
There exists a constant $C$ such that
\be \la{cxb58}
\sup_{0\le t\le T}\si\left(\|\na u_t\|_{H^1}
 +\|\na u\|_{W^{2,q}}\right)
 +\int_{0}^T\si^2\|\nabla u_{tt}\|_{2}^2dt\le C ,
 \ee
 for any $q\in(3,6)$ .
\end{lemma}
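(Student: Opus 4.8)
The plan is to derive first the single new ingredient
\[
\sup_{0\le t\le T}\sigma^2\|\rho^{1/2}u_{tt}\|_{L^2}^2+\int_0^T\sigma^2\|\nabla u_{tt}\|_{L^2}^2\,dt\le C ,
\]
and then to read off the two stated bounds from the elliptic estimates already available. For the display I would differentiate $(\ref{a1})_2$ twice in time, write the result in the curl form $(\rho\dot u)_{tt}+\nabla P_{tt}=(2\mu+\lambda)\nabla\div u_{tt}-\mu\nabla\times\curl u_{tt}$, multiply by $\sigma^2 u_{tt}$, and integrate over $\Omega\times(0,T)$. When the viscous term is integrated by parts, the two slip conditions $u_{tt}\cdot n=0$ and $\curl u_{tt}\times n=0$ on $\partial\Omega$ (inherited from \eqref{ch1} at every time) make the resulting boundary integrals over $\partial\Omega$ vanish, leaving $(2\mu+\lambda)\int_0^T\sigma^2\|\div u_{tt}\|_{L^2}^2\,dt+\mu\int_0^T\sigma^2\|\curl u_{tt}\|_{L^2}^2\,dt$, which by \eqref{h3} (so $2\mu+\lambda>0$) and the exterior-domain div--curl inequality \eqref{ljq01} of Lemma \ref{crle1} dominates $c\int_0^T\sigma^2\|\nabla u_{tt}\|_{L^2}^2\,dt$. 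The only genuinely new term on the other side is the triple-time-derivative piece $\int_0^T\!\!\int\sigma^2\rho\,u_{ttt}\cdot u_{tt}\,dx\,dt$ hidden in $(\rho\dot u)_{tt}$; writing $2\rho\,u_{ttt}\cdot u_{tt}=\partial_t(\rho|u_{tt}|^2)-\rho_t|u_{tt}|^2$ and integrating by parts in $t$, the weight $\sigma^2$ annihilates the contribution at $t=0$, while the contribution at $t=T$ is exactly $\tfrac12\sigma^2(T)\|\rho^{1/2}u_{tt}(T)\|_{L^2}^2$, which moves to the left and produces the supremum.

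The remaining right-hand side terms — those involving $\rho_t$, $\rho_{tt}$, $P_{tt}$, $(\rho u\cdot\nabla u)_{tt}$ and the factors $\sigma\sigma'$ from differentiating the weight — are estimated by Hölder's and the Gagliardo--Nirenberg inequality \eqref{g1} together with the bounds of Lemmas \ref{xle1}, \ref{xle3}, \ref{xle4}, in particular $\sup_t\sigma\|\nabla u_t\|_{L^2}^2+\int_0^T\sigma\|\rho^{1/2}u_{tt}\|_{L^2}^2\,dt\le C$ from \eqref{cxb25} and $\int_0^T(\|\rho_{tt}\|_{L^2}^2+\|P_{tt}\|_{L^2}^2)\,dt\le C$ from \eqref{cxb24}; every occurrence of $\|\nabla u_{tt}\|_{L^2}$ is absorbed into the good term by Young's inequality with a small parameter. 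A point needing care is that on the unbounded $\Omega$ the quantity $u_{tt}$ has no a priori $L^2$ bound, so wherever an $L^p$-norm of $u_{tt}$ with $p>2$ occurs one splits with the cut-off of Lemma \ref{crle5}, controlling the part on $B_{2R}\cap\Omega$ by $\|\nabla u_{tt}\|_{L^2}$ via \eqref{ljq04} and the part outside $B_{2R}$ by $\|\rho^{1/2}u_{tt}\|_{L^2}$ when $\rho_\infty>0$ (since then $\rho$ is bounded below far out) or by the fast decay of $\rho_t$ coming from $\rho\in L^{3/2}$ when $\rho_\infty=0$. Since $\sigma^2(0)=0$, integrating the resulting differential inequality (equivalently, a short Gronwall argument) yields the display.

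Granting the display, the bound $\sup_t\sigma\|\nabla u_t\|_{H^1}\le C$ follows at once from the elliptic estimate \eqref{cxb480}, $\|\nabla^2 u_t\|_{L^2}\le C\|\rho^{1/2}u_{tt}\|_{L^2}+C\|\nabla u_t\|_{L^2}+C$ for the system \eqref{cxb49}, multiplied by $\sigma$ and combined with \eqref{cxb25}. Finally $\sup_t\sigma\|\nabla u\|_{W^{2,q}}\le C$ follows from the elliptic estimate \eqref{cxb52}, $\|\nabla^2 u\|_{W^{1,q}}\le C(1+\|\nabla u_t\|_{L^2}+\|\nabla(\rho\dot u)\|_{L^q}+\|\nabla^2 P\|_{L^q})$, together with the bound \eqref{cxb53} for $\|\nabla(\rho\dot u)\|_{L^q}$ — now controllable since $\sigma\|\nabla u_t\|_{H^1}$ is bounded — and the $W^{2,q}$-bound \eqref{cxb56} for $P$; multiplying by $\sigma$ closes the proof.

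The main obstacle is the weighted $u_{tt}$-estimate of the first two paragraphs: handling the third time derivative (via integration by parts in $t$ and the vanishing of $\sigma^2$ at $t=0$, together with the supremum term it generates), exploiting the slip conditions so that the Lamé boundary integrals disappear and the dissipation genuinely controls $\|\nabla u_{tt}\|_{L^2}$ through the exterior-domain div--curl estimates of Section 2, and substituting for the missing far-field $L^2$-control on $u_{tt}$ by the cut-off decomposition and \eqref{g1}, \eqref{ljq04} — these are exactly the points where the unbounded geometry and the slip boundary make the argument heavier than its bounded-domain counterpart in \cite{CCL1}.
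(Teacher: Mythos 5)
Your proposal is correct and follows essentially the same route as the paper: differentiate the momentum equation twice in time and test with $u_{tt}$ (you carry the weight $\sigma^2$ inside the identity and integrate by parts in $t$, while the paper first derives the unweighted differential inequality \eqref{cxb66} and then applies Gronwall with the weight, using \eqref{cxb24}--\eqref{cxb25} -- the same computation), exploit the slip conditions so the boundary terms vanish and the dissipation controls $\div u_{tt}$, $\curl u_{tt}$, recover $\|\nabla u_{tt}\|_{L^2}$ by the exterior div--curl estimate since $u_{tt}\cdot n=0$, and then conclude via the elliptic bounds \eqref{cxb480}, \eqref{cxb52}, \eqref{cxb53}, \eqref{cxb56} exactly as in \eqref{cxb68}--\eqref{cxb69}. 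One remark: your cut-off digression for $L^p$-norms of $u_{tt}$ (and the appeal to a lower bound on $\rho$ far out, or to decay of $\rho_t$ when $\rho_\infty=0$) is unnecessary, since in all the terms $J_1$--$J_5$ every occurrence of $u_{tt}$ is either $\rho^{1/2}u_{tt}$ in $L^2$ or $u_{tt}$ in $L^6$, and $\|u_{tt}\|_{L^6(\Omega)}\le C\|\nabla u_{tt}\|_{L^2(\Omega)}$ holds on the whole exterior domain.
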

\begin{proof} Differentiating $(\ref{a1})_2$ with respect to $t$ twice
gives \be\la{cxb59}\ba
&\n u_{ttt}+\n u\cdot\na u_{tt}-(\lambda+2\mu)\nabla{\rm div}u_{tt}+\mu\nabla\times\curl u_{tt}\\
&= 2{\rm div}(\n u)u_{tt}
+{\rm div}(\n u)_{t}u_t-2(\n u)_t\cdot\na u_t-(\n_{tt} u+2\n_t u_t)
\cdot\na u\\& \quad- \n u_{tt}\cdot\na u-\na P_{tt}.
 \ea\ee

Then, multiplying (\ref{cxb59}) by $2u_{tt}$ and   integrating over $\Omega ,$ it is easy to check that
\be \la{cxb60}\ba
&\frac{d}{dt}\int_{ }\n
|u_{tt}|^2dx+2(\lambda+2\mu)\int_{ }(\div u_{tt})^2dx+2\mu\int_{ }|\curl u_{tt}|^2dx \\
&=-8\int_{ }  \n u^i_{tt} u\cdot\na
 u^i_{tt} dx-2\int_{ }(\n u)_t\cdot \left[\na (u_t\cdot u_{tt})+2\na
u_t\cdot u_{tt}\right]dx\\&\quad -2\int_{
}(\n_{tt}u+2\n_tu_t)\cdot\na u\cdot u_{tt}dx-2\int_{ }   \n
u_{tt}\cdot\na u\cdot  u_{tt} dx\\&\quad+2\int_{ } P_{tt}{\rm
div}u_{tt}dx\triangleq\sum_{i=1}^5J_i.
\ea\ee

Let us estimate $J_i\,(i=1,\cdots,5)$ one by one.
For $J_1$, by H\"{o}lder's and Young's
inequalities,
\be \la{cxb61} \ba |J_1|&\le
C\|\n^{1/2}u_{tt}\|_{L^2}\|\na u_{tt}\|_{L^2}\| u \|_{L^\infty}\\
&\le \de \|\na u_{tt}\|_{L^2}^2+C(\de)\|\n^{1/2}u_{tt}\|^2_{L^2} .
\ea\ee
It follows from (\ref{cxb17}), (\ref{cxb24}), (\ref{cxb25}) and
(\ref{cxb2}) that
\be \la{cxb62}\ba
|J_2|&\le C\left(\|\n
u_t\|_{L^3}+\|\n_t u\|_{L^3}\right)\left(\| u_{tt}\|_{L^6}\| \na
u_t\|_{L^2}+\| \na u_{tt}\|_{L^2}\| u_t\|_{L^6}\right)\\&\le
C\left(\|\n^{1/2} u_t\|^{1/2}_{L^2}\|u_t\|^{1/2}_{L^6}+\|\n_t
\|_{L^6}\| u\|_{L^6}\right)  \| \na u_{tt}\|_{L^2}  \| \na u_{t}\|_{L^2} \\ &\le \de
\|\na u_{tt}\|_{L^2}^2+C(\de)\si^{-3/2},\ea\ee

\be  \la{cxb63}\ba |J_3|&\le C\left(\|\n_{tt}\|_{L^2}
\|u\|_{L^\infty}\|\na u\|_{L^3}+\|\n_{
t}\|_{L^6}\|u_{t}\|_{L^6}\|\na u \|_{L^2}\right)\|u_{tt}\|_{L^6} \\
&\le \de \|\na u_{tt}\|_{L^2}^2+C(\de)\|\n_{tt}\|_{L^2}^2+C(\de)\si^{-1},
\ea\ee
and
\be  \la{cxb64}\ba
|J_4|+|J_5|&\le C\|\n u_{tt}\|_{L^2} \|\na
u\|_{L^3}\|u_{tt}\|_{L^6} +C \|P_{tt}\|_{L^2}\|\na
u_{tt}\|_{L^2}\\
&\le \de \|\na u_{tt}\|_{L^2}^2+C(\de)\|\n^{1/2}u_{tt}\|^2_{L^2}
+C(\de)\|P_{tt}\|^2_{L^2}.
\ea\ee
Substituting these estimates of $J_i(i=1,\cdots,5)$ into (\ref{cxb60}), using the fact that
\be  \la{cxb65}\ba
\|\nabla u_{tt}\|_{L^2}\leq C(\|\div u_{tt}\|_{L^2}+\|\curl u_{tt}\|_{L^2}) ,
\ea\ee
since $u_{tt}\cdot n=0$ on $\partial\Omega,$ and then choosing $\de$ small enough, we get
\be  \la{cxb66}\ba
&\frac{d}{dt}\|\n^{1/2}u_{tt}\|^2_{L^2}+\|\na u_{tt}\|_{L^2}^2\\
&\le C \left(\|\n^{1/2}u_{tt}\|^2_{L^2}+\|\n_{tt}\|^2_{L^2}+\|P_{tt}\|^2_{L^2}\right)+C \si^{-3/2},
 \ea\ee
which, together with  (\ref{cxb24}), (\ref{cxb25}) and by Gronwall's inequality shows that
\be  \la{cxb67}\ba
\sup_{0\le t\le T}\si\|\n^{1/2}u_{tt}\|_{L^2}^2+\int_{0}^T\si^2\|\nabla u_{tt}\|_{L^2}^2dt\le C.
\ea\ee
Furthermore, it follows from \eqref{cxb480} and \eqref{cxb25} that

\be  \la{cxb68}\ba
\sup_{0\le t\le T}\si\|\nabla u_t\|_{H^1}^2\le C.
\ea\ee

Finally, by \eqref{cxb52}, \eqref{cxb53}, \eqref{cxb25}, \eqref{cxb44}, \eqref{cxb45}, \eqref{cxb67} and \eqref{cxb68}, we find that
\be  \la{cxb69}\ba
\si\|\na^2 u\|_{W^{1,q}}
& \le C\left(\si +\si\|\na  u_t\|_{L^2}+\si\| \na(\n\dot u )\|_{L^{q}}+\si\|\na^2  P\|_{L^{q}}\right)\\
& \le C(\sigma+\sigma^{\frac{1}{2}} +  \si\|\na u\|_{H^2}+\sigma^{\frac{1}{2}}(\sigma\|\na u_t\|_{H^1}^2)^{\frac{3(q-2)}{4q}})\\
&\le C\sigma^{\frac{1}{2}}+C\sigma^{\frac{1}{2}}(\sigma^{-1})^{\frac{3(q-2)}{4q}}\\
&\le C ,
\ea\ee
together with (\ref{cxb67}) and (\ref{cxb68}) yields (\ref{cxb58}) and completes the proof .
\end{proof}

\section{\la{se6}Proofs of  Theorems  \ref{th1} and \ref{th2}}

With all the a priori estimates in Section \ref{se3} and Section \ref{se5} at hand, we will  prove the main results of this paper in this section.

{\it \bf{Proof of Theorem \ref{th1}.}} By Lemma \ref{loc1}, the system (\ref{a1})-(\ref{ch2}) has a unique classical solution $(\rho,u)$ on $\Omega\times
(0,T_*]$ for some $T_*$. Now we will extend the classical
solution $(\rho,u)$ globally in time.

First, by the definition of $A_1(T)$, $A_2(T)$ (see \eqref{As1}, \eqref{As2}), the assumption of the initial data \eqref{dt2} and \eqref{bcbh2}, we have
$$ A_1(0)+A_2(0)=0, \,\, 0\leq\rho_0\leq \bar{\rho},\,\, A_3(0)\leq C_0^{\frac{1}{4}}.$$
Hence, there exists a
$T_1\in(0,T_*]$ such that
\be\la{dlbh1}\ba
0\leq\rho_0\leq2\bar{\rho},\,\,A_1(T_1)+A_2(T_1)\leq 2C_0^{\frac{1}{2}}, \,\, A_3(\sigma(T_1))\leq 2C_0^{\frac{1}{4}}.
\ea\ee

Next, set
\bn \la{dlbh2}
T^*=\sup\{T\,|\,{\rm (\ref{dlbh1}) \ holds}\}.
\en
Naturally, $T^*\geq T_1>0$. On the other hand, for any $0<\tau<T\leq T^*$, one deduces from Lemmas \ref{xle3}-\ref{xle5}
that
 \be \la{dlbh3}\begin{cases}
   \rho-\bar{\rho} \in C([0,T]; W^{2,q}), \\ \na u_t \in C([\tau ,T]; L^q),\quad
 \na u,\na^2u \in C\left([\tau ,T];
 C (\bar{\Omega})\right),\end{cases}\ee
 where one has taken advantage of  the standard
embedding
$$L^\infty(\tau ,T;H^1)\cap H^1(\tau ,T;H^{-1})\hookrightarrow
C\left([\tau ,T];L^q\right),\quad\mbox{ for any } q\in [2,6).  $$
By (\ref{cxb17}), (\ref{cxb25}), (\ref{cxb58}) and $(\ref{a1})_1$,
one gets
\be\ba
&\int_{\tau}^T \left|\left(\int\n|u_t|^2dx\right)_t\right|dt\no
&\le\int_{\tau}^T\left(\|  \n_t  |u_t|^2 \|_{L^1}+2\|  \n  u_t\cdot u_{tt} \|_{L^1}\right)dt\\
&\le C\int_{\tau}^T \left( \| \n|\div u||u_t|^2 \|_{L^2}+\|  |u||\na \n| |u_t|^2 \|_{L^1}+ \|\rho^{\frac{1}{2}}  u_t
\|_{L^2}\|\rho^{\frac{1}{2}}u_{tt} \|_{L^2}\right)dt\\
&\le C\int_{\tau}^T\left( \| \n^{\frac{1}{2}} |u_t|^2 \|_{L^2}\|\na u\|_{L^\infty}+\|  u\|_{L^6}\|\na\n\|_{L^2} \|u_t  \|^2_{L^6}+  \|\rho^{\frac{1}{2}}u_{tt} \|_{L^2}\right)dt\\
&\le C,\ea\ee
which together with \eqref{dlbh3} indicates that
\be\la{dlbh4} \n^{1/2}u_t, \quad\n^{1/2}\dot u \in C([\tau,T];L^2).\ee
Finally, we will claim that \be \la{dlbh5}T^*=\infty.\ee Otherwise,
$T^*<\infty$. Now by Proposition \ref{pr1}, it holds that
\be\la{dlbh6}\ba
0\leq\rho\leq\frac{7}{4}\bar{\rho} ,\,\,\,A_1(T^*)+A_2(T^*)\leq C_0^{\frac{1}{2}},\,\,\, A_3(\sigma(T^*))\leq C_0^{\frac{1}{4}} .
\ea\ee
we deduce from Lemmas \ref{xle4}, \ref{xle5} and
(\ref{dlbh4}) that $(\n(x,T^*),u(x,T^*))$ satisfies
the initial data condition (\ref{dt1})-(\ref{dt3}),
where  $g(x)\triangleq\n^{1/2}\dot u(x, T^*),\,\,x\in \Omega.$ Thus, Lemma
\ref{loc1} asserts that there is a $T^{**}>T^*$ such that
(\ref{dlbh1}) holds for $T=T^{**}$, which contradicts the definition of $ T^*.$
Hence, $T^*=\infty$.

 By Lemmas \ref{loc1} and \ref{xle3}-\ref{xle5}, $(\rho,u)$ is really the unique classical solution defined on $\Omega\times(0,T]$ for any  $0<T<T^*=\infty.$

 It remains to prove \eqref{qa1w}. Multiplying (\ref{lx4}) by $4(P-P(\rho_\infty))^3$, we have
 \be \la{lx55}\ba
& \frac{d}{dt}(\|P-P(\rho_\infty)\|_{L^4}^4)  \\
&=-(4\ga-1)\int(P-P(\rho_\infty))^4\div udx-\ga \int P(\rho_\infty)(P-P(\rho_\infty))^3\div udx\\
&\le C\|P-P(\rho_\infty)\|_{L^4}^4+C\|\nabla u\|_{L^2}^2,
\ea\ee
by\eqref{a16} and \eqref{lx61}, yields
\bnn\ba
& \int_1^\infty|\frac{d}{dt}\|P-P(\rho_\infty)\|_{L^4}^4|dt\leq C  \\
\ea\enn
together with \eqref{lx61}, we have
\bnn\ba
\lim_{t\rightarrow\infty}\|P-P(\rho_\infty)\|_{L^4}^4=0.
\ea\enn
Hence, for all $r\in(2,\infty)$ if $\rho_\infty>0$ and $r\in(\gamma,\infty)$ if $\rho_\infty=0$, we get
\be\la{lx58}\ba
\lim_{t\rightarrow\infty}\|\rho-\rho_\infty\|_{L^r}=0.
\ea\ee
Set $$\phi(t)\triangleq (\lambda+2\mu)\|\div u \|_{L^{2}}^{2}+\mu\|\curl u\|_{L^{2}}^{2}.$$ By \eqref{a16} and \eqref{lx1},
\bnn\ba
& \int_1^\infty|\phi(t)|^2dt\leq C.  \\
\ea\enn
Reviewing our derivation of \eqref{I4}, one can find that  the first term on the left side can be signed with absolute value. Taking $m=0$, we get
\be\la{lx56}\ba
|\phi'(t)|\leq C(\|\rho^\frac{1}{2}\dot{u}\|_{L^2}^2+\|\nabla u\|_{L^2}\|\nabla u\|_{L^4}^2+\|\nabla \dot{u}\|_{L^2}^2),
\ea\ee
which, along with \eqref{a16}, \eqref{lx1}  and \eqref{lx7}, gives
\bnn\ba
& \int_1^\infty|\phi'(t)|^2dt\leq C.  \\
\ea\enn
As a result,
 \be\la{lx57}\ba
\lim_{t\rightarrow\infty}\|\nabla u\|_{L^2}=0.
\ea\ee
Finally, due to
 \bnn\ba
\int\rho^\frac{1}{2}|u|^4dx\leq C\|\rho^\frac{1}{2}u\|_{L^2} \|u\|_{L^6}^3\leq C \|\nabla u\|_{L^2}^3,
\ea\enn
and \eqref{lx57}, we obtain
\bnn\ba
\lim_{t\rightarrow\infty}\|\rho^\frac{1}{8}u\|_{L^4}=0.
\ea\enn
 The proof of Theorem \ref{th1} is completed.

{\it\bf{ Proof of Theorem \ref{th2}.}} For $T>0$, the Lagrangian coordinates of the system are given by
  \be \la{c61}  \begin{cases}\frac{\partial}{\partial \tau}X(\tau; t,x) =u(X(\tau; t,x),\tau),\,\,\,\, 0\leq \tau\leq T\\
 X(t;t,x)=x, \,\,\,\, 0\leq t\leq T,\,x\in\bar{\Omega}.\end{cases}\ee
 By \eqref{dt6}, the transformation \eqref{c61} is well-defined. In addition, by $\eqref{a1}_1$, we find that
 \be\la{c62}\ba
\rho(x,t)=\rho_0(X(0; t, x)) \exp \{-\int_0^t\div u(X(\tau;t, x),\tau)d\tau\}.
\ea \ee
 If there exists some point $x_0\in \Omega$ such that $\n_0(x_0)=0,$ then there is a point $x_0(t)\in \bar{\Omega}$ such that $X(0; t, x_0(t))=x_0$. Hence, by \eqref{c62}, $\rho(x_0(t),t)\equiv 0$ for any $t\geq 0.$

 Now we will prove Theorem \ref{th2} by contradiction. Suppose there exist some positive constant $C_1$ and a subsequence ${t_{n_j}},$ $t_{n_j}\rightarrow \infty$ as $j\rightarrow \infty$ such that $\|\na\n (\cdot,t_{n_j})\|_{L^r}<C_1$. Consequently, by  Gagliardo-Nirenberg's inequality, we get that for $ r\in  (3,\infty)$ and $\theta_1=\frac{r-3}{2r-3}$,
\be\la{c63}\ba\rho_\infty\leq\|\rho(\cdot,t_{n_j})-\rho_\infty\|_{C\left(\ol{\O }\right)} \le C
\|\rho(\cdot,t_{n_j})-\rho_\infty\|_{L^3}^{\theta_1}\|\na \rho(\cdot,t_{n_j})\|_{L^r}^{1-\theta_1},
\ea\ee
which is in contradiction with \eqref{lx58}. So we complete the proof.

\section*{Acknowledgements}
The research of \textsc{J. Li} was
partially supported  by the National Center for Mathematics and Interdisciplinary Sciences, CAS, and  NNSFC Grant (Nos. 11688101, 11525106, and 12071200), and Double-Thousand Plan of Jiangxi Province(No. jxsq2019101008).    The research of \textsc{B. L\"u} was partially   supported by NNSFC (No. 11971217) and Natural Science Foundation of Jiangxi Province (Nos. 20161BAB211002 and and 20212BCJ23027).

\end{document}